%!TEX encoding =  UTF-8 Unicode

%% Hidden constructions in abstract algebra (3)\\
%% (3) Krull Dimension of distributive lattices and commutative rings
%% pour ArXiv
%% T. Coquand, H. Lombardi 2002
\documentclass[10pt,twoside,a4paper]{article} 
\usepackage{amsfonts}
\usepackage[utf8]{inputenc}
\usepackage[T1]{fontenc}
\usepackage{lmodern}

\usepackage[bookmarksopen=false,breaklinks=true,%
      backref=page,pagebackref=true,plainpages=false,%
      hyperindex=true,pdfstartview=FitH,%
      pdfpagelabels=true,colorlinks=true,linkcolor=blue,%
      citecolor=red,urlcolor=red,hypertexnames=false,colorlinks=false%
      ]%
   {hyperref}

%\DeclareSymbolFont{lasy}{U}{lasy}{m}{n}
%\SetSymbolFont{lasy}{bold}{U}{lasy}{b}{n}
%\let\Box\undefined
%\DeclareMathSymbol\Box{\mathord}{lasy}{"32}

%----- ceci retrecit les marges format  A4---------
\marginparwidth 0pt
\oddsidemargin  .5cm
\evensidemargin  .5cm
\marginparsep 0pt
\topmargin -1cm
\textwidth 15cm
\textheight 23cm 
\sloppy
%-----------------------------------

%-------- newtheorem ----------

\newtheorem{theorem}{Theorem}[section]

\newtheorem{proposition}[theorem]{Proposition}
\newtheorem{lemma}[theorem]{Lemma}
\newtheorem{corollary}[theorem]{Corollary}

\newtheorem{definition}[theorem]{Definition}

\newtheorem{notation}[theorem]{Notation}

%-------- definitions --------------
\newcommand {\junk}[1]{}

\newenvironment{proof}[1]{
\trivlist \item[\hskip \labelsep{\bf #1.}]\hskip 0pt\\}{\hfill
\mbox{$\Box$}
\endtrivlist}
\makeatletter

\def\.@{\char'76}

% \def\= {\hbox{~{\bf = }~}}

% \def \aigu{\mathaccent19}    %accent aigu math
% \def \grave{\mathaccent18}    %accent aigu math
%------- med skip et autres ---------
\def \noi {\noindent}
\def \ss {\smallskip}
\def \sni {\ss\noi}
\def \ms {\medskip}
\def \mni {\ms\noi}
\def \bs {\bigskip}
\def \bni {\bs\noi}

%------- logique --------------

\def \aoe {{\land,\lor,\exists}}

\def \these {\quad\vdash\quad}
\def \thesu {\quad\vdash_\aoe\quad}

\def \cl {{\circ}}

\def\equidef{\buildrel{{\rm def}}\over{\quad\Longleftrightarrow\quad}}
\def\Equ{\quad\Longleftrightarrow\quad}

%------- formules a parametres --------

%\def\Land#1{\buildrel{#1}\over \&}

\def\gen#1{\left\langle{#1}\right\rangle}

\def \snic#1 {\sni\centerline{$#1$}\ss}
\def \snif#1#2#3 {\vspace{#1}\noindent\centerline{$#3$}\vspace{#2}}

\def \fait#1#2 {\vspace{.1cm} \begin{tabular}{p{2cm}p{2cm}l l l}
$\cl\;#1\;\cl$ & $\these$ & $#2$ \end{tabular}\vspace{.1cm}}
\def \faut#1#2 {\vspace{.1cm} \begin{tabular}{p{2cm}p{2cm}l l l}
                $\cl\;#1\;\cl$ & $\thesu$ & $#2$
                   \end{tabular}}

\def \ov#1 {\overline#1 }

\def \wi#1 {\widetilde#1 }
\def \sd#1#2 {\widetilde#1_#2 }

%------- lettres formatees --------
% standards maths

\def \N{\mathbb{N}}

% soulign\'e

% gras

% calligraphique
\def \cC {{\cal C}}

\def \cI {{\cal I}}

\def \cP {{\cal P}}

\def \cM {{\cal M}}

% roman

% anneaux, ideaux, ...

% treillis distributifs
\def \vu {\,\vee\,} % sup dans les treillis
\def \vi {\,\wedge\,} % inf dans les treillis
\def \Vu {\bigvee}
\def \Vi {\bigwedge}
\def \vda {\,\vdash\,}

\def \Un {{\bf 1}}
\def \Deux {{\bf 2}}
\def \Trois {{\bf 3}}
\def \Quatre {{\bf 4}}

%------- formules formatees ---------

\def \Pf {{{\rm P}_{{\rm f}}}}

\def \Hom {{\rm Hom}}

\def \dim {{\rm dim}}

\def \Spec {{\rm Spec}}
\def \Zar {\,{\rm Zar}}
\def \Kr {\,{\rm Kr}}
\def \Kru {\,{\rm Kru}}

% ----  ssi  etc

\def \cad {\textit{i.e.}, }
\def \ssi {if, and only if, }

\def \Propeq {The following properties are equivalent: }
\def \propeq {the following properties are equivalent: }
\def \disept {17$^{{\rm th}}$ Hilbert's problem }

%------- abr\'eviations math\'ematiques courantes ---

% \def \cac {corps alg\'e\-bri\-que\-ment clos }
% \def \cacz {corps
%   alg\'e\-bri\-que\-ment clos}

\def \entrel {entailment relation }
\def \entrelz {entailment relation}
\def \entrels {entailment relations }
\def \entrelsz  {entailment relations}

\def \homo {homomorphism }

\def \homos {homomorphisms }

\def \nst {Null\-stellen\-satz }
\def \nstz {Null\-stellen\-satz}

\def \proi {idealistic prime }
\def \prois {idealistic primes }
\def \proiz {idealistic prime}
\def \proisz {idealistic primes}

\def \proc {idealistic chain }

\def \procz {idealistic chain}

\def \proel {elementary \proc}

\def \proelz {elementary \procz}

\def \prolo {\proc of length }

\def \trdi {distributive lattice }

\def \trdisz  {distributive lattices}

%  ------- maths constructives

\def \LLPO{{\bf LLPO}}

\def \tcg {completeness theorem }
\def \tcgz {completeness theorem}

\def \Tcgi {The \tcg implies the following result. }

\pagestyle{headings}
\RequirePackage{etoolbox}

%-------begin{document}-----------------------------------
\begin{document}
\title{ Hidden constructions in abstract algebra \\
Krull Dimension of distributive lattices and commutative rings}
\author{
Thierry Coquand
(\thanks {~
Chalmers, University of G\"oteborg, Sweden,
email: coquand@cse.gu.se}~)
Henri Lombardi
(\thanks{~
\'Equipe de Math\'ematiques, CNRS UMR 6623, UFR des Sciences et
Techniques,
Universit\'e de Franche-Comt\'e, 25030 Besan\c con cedex, FRANCE,
email: lombardi@univ-fcomte.fr}~),
}
\date{may 2002}

\maketitle

%--------  abstract  --------------
\begin{abstract}
We present constructive versions of Krull's dimension theory for
commutative rings and distributive lattices. The foundations of these
constructive versions are due to Joyal, Espa\~nol and the authors.
We show that the notion of Krull dimension
has an explicit computational content in the form of existence (or
lack of existence) of some algebraic identities.
We can then get an explicit computational content where abstract
results about dimensions are
used to show the existence of concrete elements. This can be seen
as a partial realisation of Hilbert's program for classical abstract
commutative algebra.
\end{abstract}
%----------------------
\bni MSC 2000: 13C15, 03F65, 13A15, 13E05

\bni Key words: Krull dimension, distributive lattices,
Constructive Mathematics.

\bigskip This paper has been published as: {\em Hidden constructions in abstract algebra: {K}rull dimension of
distributive lattices and commutative rings}.  p.\ 477--499 in Commutative ring theory and applications (Fez, 2001), Lecture Notes in Pure and Applied Mathematics, volume 231. Dekker, New-York (2003).

Here we have updated the bibliography.

Notice also that for entailment relations and from a constructive viewpoint it is simpler to take~$\Pf(S)$
as the set of finitely enumerated subsets of $S$ (rather than finite subsets).

\newpage

\tableofcontents

\newpage
\markboth{Introduction}{Introduction}

%--- SECTION{sec Introduction} ----------------------
\section*{Introduction} \label{sec Introduction}
\addcontentsline{toc}{section}{Introduction}
%-----------------------------------------------------

We present constructive versions of Krull's dimension theory for
commutative rings and distributive lattices. The foundations of these
constructive versions are due to Joyal, Espa\~nol and the authors.
We show that the notion of Krull dimension
has an explicit computational content in the form of existence (or
lack of existence) of some algebraic identities. This confirms the feeling
that
commutative algebra can be seen computationally as a machine that produces
algebraic identities (the most famous of which being called \nstz).
This can be seen
as a partial realisation of Hilbert's program for classical abstract
commutative algebra.

  Our presentation follows Bishop's style (cf. in algebra \cite{MRR}).
As much as possible, we kept minimum any explicit mention to logical
notions.
When we say that we have a constructive version of an abstract
algebraic theorem, this means that we have a theorem the proof of
which is constructive, which has a clear computational content, and
from which we can recover the usual version of the abstract theorem
by an immediate application of a well classified non-constructive
principle. An abstract
classical theorem can have several distinct interesting constructive
versions.

  In the case of abstract theorem in commutative algebra, such a
non-constructive principle is the completeness theorem, which
claims the existence of a model of a formally consistent propositional
theory.
We recall the exact formulation of this theorem in the appendix, as
well as its derivation from the compactness theorem
When this is used for algebraic structures of enumerable
presentation (in a suitable sense) the compactness and
completeness theorem can be seen as
a reformulation of Bishop $\LLPO$ (a real number is $\geq 0$ or $\leq 0$).

  To avoid the use of \tcg is not
motivated by philosophical but by practical
considerations. The use of this principle leads indeed to replace
quite direct (but usually hidden) arguments by indirect ones which are
nothing else than a double contraposition of the direct proofs,
with a corresponding lack of computational content.
For instance \cite{clr} the abstract proof of \disept claims~: if
the polynomial $P$ is not a sum of rational fractions there is
a field $K$ in which one can find an absurdity by reading the
(constructive)
proof that the polynomial is everywhere positive or zero. The direct
version of this abstract proof is: from the (constructive) proof
that  the polynomial is everywhere positive or zero, one can show
(using arguments of the abstract proofs) that any attempt to build $K$
will fail. This gives explicitly the sum of squares we are looking for.
In the meantime, one has to replace the abstract result: ``any real
field can be ordered'' by the constructive theorem: ``in a field
in which any attempt to build an ordering fails $-1$ is a sum of
squares''.
One can go from this explicit version to the abstract one by \tcgz,
while
the proof of the explicit version is hidden in the
algebraic manipulations that appear in the usual classical proof of the
abstract version.

\mni Here is the content of the paper.

%---- para DimKrullRing

%---- para Treillis distributifs
\paragraph{Distributive lattices}~
%----------------------------------------

\noindent
In this section, we present basic theorems on distributive
lattices. An important simplification of proofs and computations
is obtained via the systematic use of the notion of entailment
relation, which has its origin in the cut rule  in
Gentzen's sequent calculus, with the fundamental theorem~\ref{thEntRel1}.

%---- para Treillis distributifs
\paragraph{Dimension of distributive lattices}~
%----------------------------------------
 In this section,  we develop the theory of
Krull dimension of distributive lattices, explaining briefly
the connection with Espa\~nol's developments of Joyal' s theory.
We show that  the property to have a Krull dimension $\leq {\ell}$
can be formulated as the existence of concrete equalities in
the distributive lattice.

%---- para Treillis de Zariski et de Krull
\paragraph{Zariski and Krull lattice}
%------------------------------------------------------------------

\noindent In  section \ref{secZariKrull}  we define the
Zariski lattice of a commutative ring (whose elements are
radicals of finitely generated ideals), which is the constructive
counterpart of Zariski spectrum~: the points of Zariski spectrum
are the prime ideals of Zariski lattice, and the constructible subsets
of Zariski spectrum are the elements of the Boolean algebra generated
by the Zariski lattice.
Joyal's idea is to define Krull dimension of a commutative ring as
the dimension of its Zariski lattice. This avoids any mention of
prime ideals. We show the equivalence between this (constructive) point
of view and the (constructive) presentation given in \cite{lom},
showing that the property to have a Krull dimension $\leq {\ell}$
can be formulated as the existence of concrete equalities in the ring.

%---- para Going Up et going down

%---- para Conclusion
\paragraph{Conclusion}~
%------------------------------------------------------------------

\noindent
  This article confirms the actual realisation of Hilbert's program
for a large part of abstract commutative algebra.
(cf.  \cite{clr,cp,kl,lom95,lom97,lom98,lom,lom99,lom99a,lq99}).
The general idea is to replace ideal abstract structures
by {\em partial specifications} of these structures.
The very short elegant abstract proof which uses these ideal
objects has then a corresponding computational version at the
level of the partial specifications of these objects.
Most of classical results in abstract commutative algebra,
the proof of which seem to require in an essential way excluded
middle and Zorn's lemma, seem to have in this way a corresponding
constructive version. Most importantly, the abstract proof of the
classical theorem always
contains, more or less implicitly, the constructive proof of
the corresponding constructive version.

\noindent
 Finally, we should note that the explicit characterisations of Krull
dimension of distributive lattices, Theorem \ref{dimLatt}, of
spectral spaces, Theorem \ref{dimSpec}, and of rings,
Corollary \ref{KrulldimRing}, are new.

\patchcmd{\sectionmark}{\MakeUppercase}{}{}{}

%--- SECTION{Treillis distributifs}

\section{Distributive lattice, Entailment relations}
\label{secKrullTreil}
{\em Elementary though it has become after successive
presentations and simplifications, the theory of distributive lattices
is the ideal instance of a mathematical theory, where a syntax is specified
together with a complete description of all models, and what is more, a table
of semantic concepts and syntactic concepts is given, together with
a translation algorithm between the two kinds of concepts. Such an
algorithm is a ``completeness theorem''} (G. C. Rota \cite{Rota}).

%----------------------------------------------------
%--- SUBsection{subsecTrd1}--- ideaux filtres ---
\subsection{Distributive lattices, filters and spectrum}
\label{subsecTrd1}
%-----------------------------------------
As indicated by the quotation above, the structure
of distributive lattices is fundamental
in mathematics, and G.C. Rota has pointed out repeatedly its potential
relevance to commutative algebra and algebraic geometry.
A distributive lattice is an ordered set with finite sups and infs,
a minimum element (written $0$) and a maximum element (written $1$).
The operations sup and inf are supposed to be distributive w.r.t.
the other. We write these operations
$\vu$ and $\vi$. The relation
  $a\leq b$  can then be defined by $a\vu b= b$ or, equivalently,
$a\vi b = a$. The theory of
distributive
lattices is then purely equational. It makes sense then to talk of
distributive lattices defined by generators and relations.

  A quite important rule, the {\em cut rule}, is the following
%-----------------begin $$----------------
$$ \left( ((x\vi a)\; \leq\;  b)\quad\&\quad  (a\; \leq\; (x\vu  b))
\right)\; \Longrightarrow \; a \leq\;  b.
$$
%-----------------end $$------------------
In order to prove this, write $ x\vi a\vi b= x\vi a$  and
$a=  a\vi(x\vu b)$ hence
%-----------------begin $$----------------
$$ a= (a\vi x)\vu(a\vi b)= (a\vi x\vi b)\vu(a\vi b)= a\vi b.
$$
%-----------------end $$------------------

A totally ordered set is a distributive lattice as soon as it has
a maximum and a minimum element.
We write ${\bf n}$ for the totally ordered set with
  $n$ elements (this is a distributive lattice for $n\neq 0$.)
A product of distributive lattices is a distributive lattice.
Natural numbers with the divisibility relation form a distributive
lattice
(with minimum element $1$ and maximum element $0$).
If $L$  and $L'$ are two distributive lattices, the set $\Hom(L,L')$ of
all
morphisms
(\cad maps preserving sup, inf, $0$ and $1$) from  $L$ to $L'$ has a
natural order
given by
%-----------------begin $$----------------
$$ \varphi \leq \psi \equidef  \forall x\in L\;
\;
\varphi(x) \leq \psi(x).
$$
%-----------------end $$------------------
A map between two totally ordered distributive lattices $L$ and $S$
is a morphism \ssi it is nondecreasing and $0_L$ and $1_L$ are mapped
into
$0_S$ and $1_S$.

The following proposition is direct.
%--- Proposition{propIdeal}------------
\begin{proposition}
\label{propIdeal} Let $L$ be a distributive lattice and $J$ a subset of
$L$. We consider the distributive lattice $L'$ generated by $L$
and the relations
  $x= 0$ for $x\in J$ ($L'$ is a quotient of $L$).  Then
%-----------------begin item------------------
\begin{itemize}
\item  the equivalence class of $0$ is the set of $a$ such that
for some finite subset $J_0$ of $J$:
%-----------------begin $$----------------
$$  a\; \leq\; \Vu_{x\in J_0}x\quad {\rm in} \;  L
$$
%-----------------end $$------------------
%
\item  the equivalence class of $1$ is the set of $b$ such that
for some finite subset $J_0$ of $J$:
%-----------------begin $$----------------
$$1\;= \;\left( b\;\vu\;\Vu_{x\in J_0}x\right)\quad {\rm in} \;  L
$$
%-----------------end $$------------------
%
\item  More generally $a\leq_{L'}b$ \ssi
for some finite subset $J_0$ of $J$:
%-----------------begin $$----------------
$$  a\; \leq\;  \left( b\; \vu\; \Vu_{x\in J_0}x\right)
$$
%-----------------end $$------------------
\end{itemize}
%-----------------end item------------------
\end{proposition}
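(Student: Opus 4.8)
The plan is to prove only the third item, since the first two are the special cases $b=0$ and $a=1$. Indeed, as $0\le_{L'}a$ always holds, the class of $0$ is $\{a : a\le_{L'}0\}$, and the third item with $b=0$ gives $a\le_{L'}0$ iff $a\le 0\vu\Vu_{x\in J_0}x=\Vu_{x\in J_0}x$; likewise, as $b\le_{L'}1$ always holds, the class of $1$ is $\{b : 1\le_{L'}b\}$, and the third item with $a=1$ gives $1\le_{L'}b$ iff $1\le b\vu\Vu_{x\in J_0}x$, which since $1$ is the top element means $1=b\vu\Vu_{x\in J_0}x$. So write $\pi\colon L\to L'$ for the canonical morphism and define, for $a,b\in L$, the relation $a\preceq b$ to mean that $a\le b\vu\Vu_{x\in J_0}x$ in $L$ for some finite $J_0\subseteq J$; the goal is then $a\le_{L'}b$ iff $a\preceq b$. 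One implication is immediate: applying $\pi$ to $a\le b\vu\Vu_{x\in J_0}x$ and using $\pi(x)=0$ for $x\in J$ yields $\pi(a)\le\pi(b)$, i.e.\ $a\le_{L'}b$.

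For the converse I would verify that $\preceq$ is a lattice preorder on $L$ under which every $x\in J$ becomes $0$. Reflexivity and the fact that $a\le b$ implies $a\preceq b$ both come from taking $J_0=\emptyset$, so that $\Vu_{x\in J_0}x=0$; the relation $x\preceq 0$ holds by taking $J_0=\{x\}$; and transitivity follows by taking the union of the two finite index sets. Compatibility with $\vu$ is equally direct, using $(b\vu\Vu_{x\in J_0}x)\vu(b'\vu\Vu_{y\in J_1}y)=(b\vu b')\vu\Vu_{z\in J_0\cup J_1}z$. The one genuine computation is compatibility with $\vi$, and this is exactly where distributivity enters: writing $u=\Vu_{x\in J_0}x$ and $v=\Vu_{y\in J_1}y$, distribute
$$(b\vu u)\vi(b'\vu v)=(b\vi b')\vu(b\vi v)\vu(u\vi b')\vu(u\vi v),$$
and observe that the last three terms are each $\le u\vu v=\Vu_{z\in J_0\cup J_1}z$; hence $a\vi a'\preceq b\vi b'$ whenever $a\preceq b$ and $a'\preceq b'$.

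Once $\preceq$ is known to be a lattice preorder, the relation $a\sim b$ defined by $a\preceq b$ and $b\preceq a$ is a lattice congruence, so $M:=L/\!\sim$ is a distributive lattice and the quotient map $q\colon L\to M$ is a morphism with $q(x)=0$ for all $x\in J$. By the universal property defining $L'$, $q$ factors as $q=h\circ\pi$ for some morphism $h\colon L'\to M$. Consequently $a\le_{L'}b$, that is $\pi(a)\le\pi(b)$, gives $q(a)\le q(b)$, i.e.\ $a\preceq b$, which closes the converse and finishes the proof. I expect the compatibility of $\preceq$ with $\vi$ to be the only delicate point; all the remaining verifications are routine bookkeeping with finite unions of the index sets $J_0$.
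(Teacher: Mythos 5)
Your proof is correct: the reduction of the first two items to the third, the explicit description of the quotient preorder $\preceq$, and the verification (with distributivity entering exactly at the $\vi$-compatibility step) together with the appeal to the universal property of $L'$ constitute the standard argument. The paper itself gives no proof, merely declaring the proposition ``direct,'' and what you have written is precisely the argument it leaves implicit.
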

%--- end-proposition----------------------------------------

In the previous proposition, the equivalence class of $0$ is called
an {\em ideal} of the lattice; it is the ideal generated by $J$.
We write it $\gen{J}_L$. We can easily check that
an ideal $I$ is a subset such that:
%--------------------begin array---------------
$$\begin{array}{rcl}
   & &  0 \in I   \\
x,y\in I& \Longrightarrow   &  x\vu y \in I   \\
x\in I,\; z\in L& \Longrightarrow   &  x\vi z \in I   \\
\end{array}$$
%---------------------end array--------------
(the last condition can be written $(x\in I,\;y\leq x)\Rightarrow y\in
I$).

Furthermore, for any morphim
  $\varphi :L_1\rightarrow L_2$,
$\varphi^{-1}(0)$ is an ideal of $L_1$.

A  {\em principal ideal} is an ideal generated by one element $a$.
We have $\gen{a}_L= \{x\in L\; ;\; x\leq a \}$. Any finitely generated
ideal is principal.

  The dual notion of ideal is the one of {\em filter}.
A filter
$F$ is the inverse image of $1$
by a morphism. This is a subset such that:
%--------------------begin array---------------
$$\begin{array}{rcl}
   & &  1 \in F   \\
x,y\in F& \Longrightarrow   &  x\vi y \in F   \\
x\in F,\; z\in T& \Longrightarrow   &  x\vu z \in F   \\
\end{array}$$
%---------------------end array--------------
%--- Notation{notaVupVda}--------------
\begin{notation}
%\label{notaVupVda}
{\rm We write $\Pf(X)$ for the set of all finite subsets of the set $X$.
If $A$ is a finite subset of a distributive lattice $L$ we define
%-----------------begin $$----------------
$$ \Vu A:= \Vu_{x\in A}x\qquad {\rm and}\qquad \Vi A:= \Vi_{x\in A}x
$$
%-----------------end $$------------------
We write $A \vdash B$ or $A \vdash_L B$ for the relation defined on the set
  $\Pf(L)$:
%-----------------begin $$----------------
$$ A \vda B \; \; \equidef\; \; \Vi A\;\leq \;
\Vu B
$$
%-----------------end $$------------------
}
\end{notation}
%--- end-notation-----------------------------------------
Note the relation  $A \vdash B$ is well defined on finite subsets
because of
associativity commutativity and idempotence of the operations
  $\vi$  and $\vu$.
Note also
$\; \emptyset  \vda \{x\}\; \Rightarrow\;  x= 1\; $ and
$ \{y\} \vda \emptyset\; \Rightarrow \; y= 0$.
This relation satisfies the following axioms, where
we write
  $x$ for $\{x\}$ and $A,B$ for $ A\cup B$.
%-------------------begin array---------------
$$\begin{array}{rcrclll}
&    & a  &\vda& a    &\; &(R)     \\
(A \vda B)\; \& \;(A\subseteq A')\; \& \;(B\subseteq B') &
\; \Longrightarrow \;  & A' &\vda& B'   &\; &(M)     \\
(A,x \vda B)\;
% H % \land
\&
\;(A \vda B,x)  &   \Longrightarrow  & A &\vda& B &\;
&(T)
\end{array}$$
%---------------------end array--------------
we say that the relation is reflexive,
  \label{remotr} monotone and
transitive. The last rule is also called \emph{cut rule}.
Let us also mention the two following rules of ``distributivity'':
%--------------------begin array---------------
$$\begin{array}{rcl}
(A,\;x \vda B)\;\& \;(A,\;y \vda B)  &  \;  \Longleftrightarrow  \; &
A,\;x\vu y \vda B  \\
(A\vda B,\;x )\;\&\;(A \vda B,\;y)  &   \Longleftrightarrow  &
A\vda B,\;x\vi y
\end{array}$$
%---------------------end array--------------

The following is proved in the same way as Proposition \ref{propIdeal}.
%--- Proposition{propIdealFiltre}------
\begin{proposition}
\label{propIdealFiltre} Let $L$ be a distributive lattice and
$(J,U)$ a pair of subsets of $L$.
We consider the distributive lattice $L'$ generated by $L$ and by the
relations
$x= 0$ for $x\in J$ and $y= 1$ for $y\in U$
($L'$ is a quotient of $L$). We have that:
%-----------------begin item------------------
\begin{itemize}
\item  the equivalence class of $0$ is the set of elements $a$ such
that:
%-----------------begin $$----------------
$$ \exists J_0\in\Pf(J),\; U_0\in\Pf(U) \qquad
a,\; U_0 \; \vdash_L\;  J_0
$$
%-----------------end $$------------------
\item  the equivalence class of $1$ is the set of elements $b$ such
that:
v\'erifient:
%-----------------begin $$----------------
$$  \exists J_0\in\Pf(J),\; U_0\in\Pf(U)\qquad
  U_0 \; \vdash_L\; b,\; J_0
$$
%-----------------end $$------------------
\item  More generally $a\leq_{L'}b$ \ssi
there exists a finite subset $J_0$ of $J$ and a finite subset $U_0$ of
$U$ such that, in $L$:
%-----------------begin $$----------------
$$  a,\; U_0 \; \vdash_L\; b,\; J_0
$$
%-----------------end $$------------------
\end{itemize}
%-----------------end item------------------
\end{proposition}
%--- end-proposition----------------------------------------

We shall write $L/(J= 0,U= 1)$ for the quotient lattice $L'$ described in
Proposition \ref{propIdealFiltre}. Let $\psi:L\rightarrow L'$ be the
canonical surjection. If  $I$ is the ideal $\psi^{-1}(0)$ and $F$ the
filter $\psi^{-1}(1)$, we say that the {\em ideal $I$ and the filter $F$
are conjugate}. By the previous proposition, an ideal $I$ and
a filter $F$ are conjugate \ssi we have:
%--------------------begin array---------------
$$\begin{array}{cl}
  \left[ I_0\in\Pf(I),\, F_0\in\Pf(F), \;
   (x,\; F_0 \vda I_0)\right]\;\Longrightarrow\;  x\in I& \quad{\rm
   and}\
\\
\left[I_0\in\Pf(I),\, F_0\in\Pf(F), \;(F_0 \vda x,\;  I_0)
\right]\;\Longrightarrow\;  x\in F.
\end{array}$$
%---------------------end array--------------
This can also be formulated as follows:
%-----------------begin $$----------------
$$
(f\in F,\; x\vi f \in I) \Longrightarrow x\in I
\quad {\rm and}\quad
(j\in I,\; x\vu j \in F) \Longrightarrow x\in F.
$$
%-----------------end $$-----------------
When an ideal $I$ and a filter $F$ are conjugate, we have
$$
1\in I\; \;\Longleftrightarrow\;\;  0\in F
\;\; \Longleftrightarrow\;\;  (I,F)= (L,L).
$$
We shall also write  $L/(I,F)$ for
$L'=  L/(J= 0,U= 1)$ .
By Proposition
  \ref{propIdealFiltre}, a \homo $\varphi$  from
$L$ to another lattice $L_1$ satisfying
$\varphi(J)= \{0\}$ and $\varphi(U)= \{1\}$ can be factorised in an
unique way through the quotient $L'$.

As shown by the example of totally ordered sets
a quotient of distributive lattices is not in general
characterised by the equivalence classes of $0$ and $1$.

\ms Classically a {\em prime ideal} $I$ of a lattice
is an ideal whose complement $F$ is a filter (which is then
a {\em prime filter}). This can be expressed by
%-----------------begin $$----------------
$$ 1\notin I\qquad {\rm and}\qquad (x\vi y)\in I\; \Longrightarrow \;
(x\in I{\rm \; or\; }y\in I)\qquad\qquad(*)
$$
%-----------------end $$------------------
which can also be expressed by saying that $I$ is the kernel
of a morphism from $L$ into the lattice with two elements
written $\Deux$.
Constructively, at least in the case where $L$ is discrete,
it seems natural to take the definition $(*)$,
where ``or'' is used constructively. The notion of prime filter
is then defined
%H  je ne suis pas d'accord avec "by exchanging  "and"  and "or"
  in a dual way.

%--- Definition{defProiT}--------------
\begin{definition}
\label{defProiT} Let $L$ be a distributive lattice.
%-----------------begin item------------------
\begin{itemize}
\item An {\em \proiz} in  $L$ is given by a pair $(J,U)$ of
finite subsets of $L$. We consider this as an incomplete specification
for a prime ideal $P$ satisfying $J\subseteq P$ and $U\cap
P= \emptyset$.
\item  To any \proi $(J,U)$  we can associate a pair $(I,F)$
as described in Proposition \ref{propIdealFiltre} where $I$
is an ideal, $F$ is a filter and $I,F$ are conjugate.
\item We say that the \proi $(J,U)$ {\em collapses}
iff we have $I = F = L$.
This means that the quotient lattice $L'= T/(J= 0,U= 1)$ is a
singleton
\cad  $1 \leq_{L'}0$, which means also $U\vda J$.
\end{itemize}
%-----------------end item------------------
\end{definition}
%--- end-definition------------------------------------

%--- Theorem{lemColSimT1}--------------
\begin{theorem}
\label{lemColSimT1} {\em (Simultaneous collapse for \proisz)} Let
$(J,U)$ be an \proi for a lattice $L$ and $x$ be an element of $L$.
%-----------------begin item------------------
If the \prois $(J\cup\{x\},U)$ and
$(J,U\cup\{x\})$  collapse, then so does $(J,U)$.
%-----------------end item------------------
\end{theorem}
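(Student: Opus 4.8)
The plan is to translate the hypotheses about collapsing idealistic primes into concrete entailment relations using the last clause of Definition \ref{defProiT}, and then combine them using the cut rule (T) for the entailment relation $\vdash_L$. By definition, the idealistic prime $(J,U)$ collapses if and only if $U \vdash J$. So I must show $U \vdash J$ given that $(J\cup\{x\},U)$ and $(J,U\cup\{x\})$ both collapse.

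First I would write out what the two hypotheses give me. The collapse of $(J\cup\{x\},U)$ means $U \vdash J\cup\{x\}$, which in the notation of the paper (where $A,B$ abbreviates $A\cup B$) reads $U \vdash J, x$. Similarly, the collapse of $(J,U\cup\{x\})$ means $U\cup\{x\} \vdash J$, that is, $U, x \vdash J$. So I have the two premises
$$
U, x \vdash J \qquad\text{and}\qquad U \vdash J, x.
$$

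The key step is then an immediate application of the cut rule (T), which states that $(A,x \vdash B)\ \&\ (A \vdash B,x) \Rightarrow A \vdash B$. Taking $A = U$ and $B = J$, the two premises above are exactly $A,x \vdash B$ and $A \vdash B, x$, so (T) yields $U \vdash J$. This says precisely that $(J,U)$ collapses, which is the desired conclusion.

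The main point — and the only subtlety — is recognizing that the statement is, after unwinding the definition of collapse, a verbatim instance of the cut rule already recorded for $\vdash_L$; once the correspondence $A\mapsto U$, $B\mapsto J$ is identified, there is nothing left to compute. I would close by remarking that this is why the cut rule is the essential ingredient, consistent with the paper's emphasis on entailment relations as a simplifying device, and that no appeal to prime ideals or any nonconstructive principle is needed.
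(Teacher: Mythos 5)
Your proof is correct and takes essentially the same route as the paper: both arguments unwind the definition of collapse into the two entailments $U,x\vdash J$ and $U\vdash J,x$ and conclude by a single application of the cut rule $(T)$. The only cosmetic difference is that the paper first extracts finite subsets $J_0,J_1\subseteq J$ and $U_0,U_1\subseteq U$ witnessing the two collapses and cuts on those, a step that is redundant in your reading since Definition \ref{defProiT} already takes $J$ and $U$ finite and identifies collapse with $U\vdash J$.
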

%--- end-theorem-----------------------------------------
%-----------------begin proof------------------
\begin{proof}{Proof}
   We have two finite subsets $J_0,J_1$ of $J$
and two finite subsets $U_0,U_1$ of $U$ such that
%-----------------begin $$----------------
$$ x,\; U_0 \vda J_0\quad{\rm  and}\quad   U_1  \vda x,\; J_1
$$
%-----------------end $$------------------
hence
%-----------------begin $$----------------
$$ x,\; U_0,\; U_1 \vda J_0,\; J_1\quad{\rm  and}\quad  U_0,\; U_1 \vda
x,\;J_0,\; J_1
$$
%-----------------end $$------------------
By the cut rule
%-----------------begin $$----------------
$$  U_0,\; U_1 \vda J_0,\; J_1
$$
%-----------------end $$------------------
\end{proof}
%-----------------end proof------------------

Notice the crucial role of the cut rule.

%--- SUBsection{subsecTrd2}--- rel implicatives -
\subsection{Distributive lattices and \entrels}
\label{subsecTrd2}
%-----------------------------------------
An interesting way to analyse the description of distributive lattices
defined by generators and relations is to consider the relation
$A \vda B$ defined on the set $\Pf(L)$ of finite subsets of a
lattice $L$.
Indeed if  $S\subseteq L$ generates the lattice $L$,
  then the relation  $\vda$ on $\Pf(S)$ is enough to characterise
the lattice $L$, because any formula on $S$ can be rewritten,
in normal conjunctive form (inf of sups in $S$) and normal
disjunctive form (sup of infs in $S$). Hence if we want to compare
two elements of the lattice generated by $S$ we write the first
in normal disjunctive form, the second in normal conjunctive form,
and we notice that
%-----------------begin $$----------------
$$ \Vu_{i\in I}\left(\Vi A_i \right)\; \leq \; \Vi_{j\in J}\left(\Vu B_j
\right)
\qquad \Longleftrightarrow\qquad  \&_{(i,j)\in I\times J}\;  \left(
A_i \vda  B_j\right)
$$
%-----------------end $$------------------

%--- Definition{defEntrel}-------------
\begin{definition}
\label{defEntrel}
For an arbitrary set $S$, a relation over  $\Pf(S)$ which is
reflexive, monotone and transitive (see page  \pageref{remotr}) is
called an {\em entailment relation.}
\end{definition}
%--- end-definition------------------------------------

The notion of \entrels goes back to Gentzen sequent calculus, where
the rule $(T)$ (the cut rule) is first explicitly stated, and
plays a key role. The connection with distributive lattices has been
emphasized in \cite{cc,cp}.
The following result (cf. \cite{cc}) is fundamental. It says that the
three properties of entailment relations are exactly the ones needed
in order to have a faithfull interpretation in distributive lattices.

%--- Theorem{thEntRel1}----------------
\begin{theorem}
\label{thEntRel1} {\rm  (fundamental theorem of \entrelsz)} Let $S$ be a
set
with an entailment relation
$\vdash_S$ over $\Pf(S)$. Let $L$ be the lattice defined by generators
and relations as follows: the generators are the elements of $S$ and the
relations are
%-----------------begin $$----------------
$$ \bigwedge A\leq \bigvee B
$$
%-----------------end $$------------------
whenever $A\; \vdash_S \; B$. For any finite subsets $A$ and $B$ of $S$
we have
%-----------------begin $$----------------
$$  A\; \vdash_L \;  B
\; \Equ \; A\; \vdash_S \;  B.
$$
%-----------------end $$------------------
\end{theorem}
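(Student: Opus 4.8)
The implication $A \vdash_S B \Rightarrow A \vdash_L B$ is immediate: by construction $\Vi A \le \Vu B$ is one of the defining relations of $L$, so it holds there. All the content is in the converse (faithfulness): I must show that passing to the lattice forces no new entailment between finite subsets of $S$. The plan is to exhibit a concrete distributive lattice $L_0$, together with a map $S \to L_0$, such that (i) the defining relations of $L$ hold in $L_0$, and (ii) $\Vi A \le \Vu B$ in $L_0$ implies $A \vdash_S B$. Granting this, the universal property of a presentation by generators and relations yields a morphism $\varphi\colon L \to L_0$ fixing the generators; since a morphism preserves $\vi,\vu$ and $\le$, from $A \vdash_L B$, \cad $\Vi A \le \Vu B$ in $L$, I obtain the same inequality in $L_0$, hence $A \vdash_S B$ by (ii).

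For $L_0$ I would take the disjunctive normal forms: elements are finite sets $\mathcal A \in \Pf(\Pf(S))$, where $\mathcal A = \{A_1,\dots,A_m\}$ stands for $\Vu_i \Vi A_i$, with $\mathcal A \vu \mathcal B := \mathcal A \cup \mathcal B$, with $\mathcal A \vi \mathcal B := \{A\cup B : A\in\mathcal A,\ B\in\mathcal B\}$, and with $0=\emptyset$, $1=\{\emptyset\}$; distributivity of these set-operations is automatic. The point is the order. Converting a disjunctive normal form into a conjunctive one gives, in any distributive lattice,
$$\Vu_{j}\Vi B_{j}\;=\;\Vi_{\tau}\Vu_{j}\tau(j),$$
where $\tau$ ranges over the transversals of $\{B_1,\dots,B_n\}$ (choices of one element from each $B_j$). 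This makes $\Vi A \le \Vu_j \Vi B_j$ equivalent to: $A \vdash_S \mathrm{Im}(\tau)$ for every transversal $\tau$. I therefore write $A \triangleright \mathcal B$ for exactly this condition and define $\mathcal A \le \mathcal B$ to mean $A \triangleright \mathcal B$ for every $A \in \mathcal A$ (passing to the poset quotient that identifies $\mathcal A$ and $\mathcal B$ when $\mathcal A \le \mathcal B \le \mathcal A$).

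Then I would check $L_0$ is a distributive lattice. Reflexivity $\mathcal A \le \mathcal A$, and the facts that $\cup$ is the join and $\{A\cup B\}$ the meet, follow routinely from $(R)$ and $(M)$: for $A\in\mathcal A$ every transversal $\tau$ of $\mathcal A$ picks $\tau(A)\in A$, whence $A \vdash_S \mathrm{Im}(\tau)$ by reflexivity and monotonicity. The essential—and only delicate—point is transitivity of $\le$, and this is exactly where the cut rule $(T)$ enters, precisely as in Theorem \ref{lemColSimT1}. Unfolding, transitivity reduces to the lemma: if $A \vdash_S \mathrm{Im}(\tau)$ for every transversal $\tau$ of $\{B_1,\dots,B_m\}$ and $B_i \vdash_S D$ for each $i$, then $A \vdash_S D$. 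I would prove this by induction on $m$, carrying a fixed right-hand context $E$ (\cad proving $A \vdash_S E\cup\mathrm{Im}(\tau)$ for all $\tau$ $\Rightarrow$ $A \vdash_S E\cup D$) so that the induction closes; the base case $m=1$ is a chain of cuts that removes the elements of $B_1$ one at a time against $B_1 \vdash_S D$, using $(M)$ to restore the missing formulae. I expect this iterated-cut argument to be the main obstacle, the remaining verifications being bookkeeping.

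Finally, I would use the map $S \to L_0$, $s \mapsto \{\{s\}\}$, under which $\Vi A$ becomes $\{A\}$ and $\Vu B$ becomes $\{\{b\} : b\in B\}$. Here $\Vi A \le \Vu B$ in $L_0$ unfolds to $A \triangleright \{\{b\} : b\in B\}$; but the only transversal of $\{\{b\} : b\in B\}$ has image $B$, so this says exactly $A \vdash_S B$. Thus the defining relations of $L$ indeed hold in $L_0$ (direction $\Leftarrow$, giving (i)), while $\Vi A \le \Vu B$ in $L_0$ is literally $A \vdash_S B$ (direction $\Rightarrow$, giving (ii)). Invoking the universal property as in the first paragraph then yields $A \vdash_L B \Rightarrow A \vdash_S B$, which together with soundness gives the claimed equivalence.
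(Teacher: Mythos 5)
Your proof is correct, and it takes a genuinely different route from the paper's, even though both build the same carrier: elements of $\Pf(\Pf(S))$ read as disjunctive normal forms, with union as join and pairwise union as meet. The paper defines the order through an \emph{inductively generated} relation $A\prec Y$ (generated by: $B\in Y$, $B\subseteq A$ implies $A\prec Y$; and $A\vdash_S y_1,\dots,y_m$ together with $A,y_j\prec Y$ for all $j$ implies $A\prec Y$); the lattice axioms, in particular $C\prec X$ and $C\prec Y$ imply $C\prec X\wedge Y$, are then proved by induction on derivations, and faithfulness is a final induction converting $A\prec\{\{b\}\;|\;b\in B\}$ into $A\vdash_S B$ by discharging the second clause with $m$ cuts. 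You instead give the order in \emph{closed form} via the conjunctive normal form (transversals), which makes faithfulness literally trivial but relocates all the work: the cut rule now carries the proof of transitivity, through your iterated-cut lemma, which is the exact counterpart of the paper's ``$m$ times the cut rule'' step. The trade-off is real: the paper's inductive $\prec$ needs no combinatorics on transversals but pays with a nontrivial faithfulness argument; your closed form yields faithfulness and an explicit, finitely checkable description of the order, but pays with the transversal bookkeeping. One such item that you dismiss too quickly: showing that $\{A\cup B\;|\;A\in\mathcal A,\ B\in\mathcal B\}$ is the \emph{greatest} lower bound is not an instance of $(R)$ and $(M)$; it needs the observation that every transversal $\tau$ of $\mathcal A\wedge\mathcal B$ contains the image of a transversal of $\mathcal A$ or of $\mathcal B$ (if some $A_0\in\mathcal A$ contains none of the chosen elements, then the choices $\tau(A_0,B)$ must all lie in the corresponding $B$'s and form a transversal of $\mathcal B$). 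This is an elementary finite case analysis on membership, and constructively it is the one spot where one wants $\Pf(S)$ to consist of finitely enumerated sets so that an element chosen from $A\cup B$ comes with a witness of which side it belongs to. With that verification supplied, your argument is complete and correct.
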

%--- end-theorem-----------------------------------------
%-----------------begin proof------------------
\begin{proof}{Proof}
We give an explicit possible description of the lattice $L$. The
elements
of $L$ are represented by finite sets of finite sets of elements of $S$
$$X= \{A_1,\dots,A_n\}$$
(intuitively $X$ represents $\Vi A_1\vu\cdots\vu\Vi A_n$).
We define then inductively the relation
  $A\prec Y$ with $A\in \Pf(S)$ and $Y\in L$
(intuitively $\Vi A\leq \Vu_{C\in Y} \left(\Vi C\right) $)
\begin{itemize}
\item if $B\in Y$ and $B\subseteq A$ then $A\prec Y$
\item if $A\vdash_S y_1,\dots,y_m$ and $A,y_j\prec Y$ for
$j= 1,\ldots,m$ then $A\prec Y$
\end{itemize}
It is easy to show that if
  $A\prec Y$ and $A\subseteq A'$ then we have
also $A'\prec Y.$ It follows that  $A\prec Z$ holds
whenever $A\prec Y$ and $B\prec Z$
for all $B\in Y$. We can then define $X\leq Y$ by
$A\prec Y$ for all $A\in X$ and one can then check that
$L$ is a distributive lattice\footnote{$L$ is actually the quotient of
$\Pf(\Pf(S))$  by the equivalence relation: $X\leq Y$ and $Y\leq X$.}
for the operations
$$0 =  \emptyset,~~~~1 =  \{\emptyset\},~~~~~X\vee Y =  X\cup Y,~~~~~
X\wedge
Y =  \{ A \cup B~|~A\in X,~B\in Y\}.
$$
For establishing this one first show that if
$C\prec X$ and $C\prec Y$ we have
$C\prec X\vi Y$ by induction on the proofs of
$C\prec X$ and $C\prec Y$.
We notice then that if
  $A\vdash_S y_1,\dots,y_m$ and $A,y_j\vdash_S B$
for all $j$ then $A\vdash_S B$ using $m$ times the cut rule.
It follows that if we have
$A\vdash_L B$, \cad
$A\prec \{\{b\}~|~b\in B\}$, then we have also $A\vdash_S B$.
\end{proof}
%-----------------end proof------------------

As a first application, we give the description of the Boolean
algebra generated by a distributive lattice.
A Boolean algebra can be seen as a distributive lattice with
a complement operation $x\mapsto \overline{x}$ such that
$x\vi \overline{x}= 0$ and $x\vu \overline{x}= 1$.
The application $x\mapsto \overline{x}$ is then a map from the lattice
to its dual.
%--- Proposition{propTrBoo}------------
\begin{proposition}
\label{propTrBoo}
Let $L$ be a distributive lattice. There exists a free
Boolean algebra generated by $L$. It can be described as
the distributive lattice generated by the set
$L_1= L\cup\overline{L}$~\footnote{$\overline{L}$  is a disjoint copy of $L$.}
with the \entrel $\;\vdash_{L_1}\;$
defined as follows:
if $A,B,A',B'$  are finite subsets of $L$ we have
%-----------------begin $$----------------
$$A,\overline{B}\;\vdash_{L_1}\; A',\overline{B'}\equidef A,B'\vda
A',B\quad
{\rm in} \;  L
$$
%-----------------end $$------------------
If we write $L_{Bool}$ for this lattice (which is a Boolean algebra), there
is
a natural embedding of $L_1$  in $L_{Bool}$
and the entailment relation of $L_{Bool}$
induces on  $L_1$ the relation  $\,\vdash_{L_1}\,$.
\end{proposition}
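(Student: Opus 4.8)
The plan is to realize $\vdash_{L_1}$ as an entailment relation on $\Pf(L_1)$ and then invoke the fundamental theorem (Theorem \ref{thEntRel1}) to obtain $L_{Bool}$, reserving the genuine work for checking that the resulting lattice is Boolean and enjoys the required universal property. First I would observe that, since $\overline{L}$ is a disjoint copy of $L$, every finite subset of $L_1=L\cup\overline{L}$ is uniquely of the form $A\cup\overline{B}$ with $A,B\in\Pf(L)$; hence the displayed formula does define a relation on all of $\Pf(L_1)$. I would then verify that this relation is reflexive, monotone and transitive. Reflexivity and monotonicity are immediate from the corresponding properties of $\vdash_L$. For the cut rule one cuts on an element $z\in L_1$ and splits into the cases $z\in L$ and $z\in\overline{L}$; in either case, unwinding the definition turns the two hypotheses into sequents of $L$ whose cut, applied to the underlying element of $L$, yields exactly the conclusion. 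So transitivity of $\vdash_{L_1}$ reduces to a single application of the cut rule in $L$.

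With $\vdash_{L_1}$ an entailment relation, Theorem \ref{thEntRel1} produces the distributive lattice $L_{Bool}$ generated by $L_1$ subject to $\bigwedge A\leq\bigvee B$ whenever $A\vdash_{L_1}B$, and guarantees $A\vdash_{L_{Bool}}B \Leftrightarrow A\vdash_{L_1}B$ for finite subsets of $L_1$. This is precisely the assertion that the entailment relation of $L_{Bool}$ induces $\vdash_{L_1}$ on $L_1$. Specializing the defining formula to $B=B'=\emptyset$ shows that the restriction of $\vdash_{L_1}$ to $\Pf(L)$ is nothing but $\vdash_L$, so the canonical map $L\to L_{Bool}$ is an order (hence lattice) embedding.

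Next I would check that $L_{Bool}$ is Boolean, with $\overline{x}$ the complement of $x$. Directly from the definition, both $\{x,\overline{x}\}\vdash_{L_1}\emptyset$ and $\emptyset\vdash_{L_1}\{x,\overline{x}\}$ reduce to $x\vdash_L x$, i.e. $x\vi\overline{x}=0$ and $x\vu\overline{x}=1$ in $L_{Bool}$. Since the complemented elements of a distributive lattice always form a sublattice (by the De Morgan formulas, using distributivity and uniqueness of complements) containing $0$ and $1$, and this sublattice now contains every generator in $L_1$, it is all of $L_{Bool}$; hence $L_{Bool}$ is a Boolean algebra.

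Finally, for the freeness I would establish the universal property. Given a Boolean algebra $C$ and a lattice morphism $\varphi:L\to C$, define $\psi$ on generators by $\psi(x)=\varphi(x)$ for $x\in L$ and $\psi(\overline{x})=\overline{\varphi(x)}$ (complement in $C$). The main point, and the one place where something has to be computed, is that $\psi$ respects $\vdash_{L_1}$. Writing $a=\bigwedge\varphi(A)$, $a'=\bigvee\varphi(A')$, $b=\bigvee\varphi(B)$, $b'=\bigwedge\varphi(B')$ in $C$ and using De Morgan, the hypothesis $A,\overline{B}\vdash_{L_1}A',\overline{B'}$, that is $A,B'\vdash_L A',B$, becomes $a\vi b'\leq a'\vu b$, whereas the inequality to be proved is $a\vi\overline{b}\leq a'\vu\overline{b'}$; in the Boolean algebra $C$ both are equivalent to the single identity $a\vi b'\vi\overline{a'}\vi\overline{b}=0$, so they coincide. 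Thus $\psi$ factors through $L_{Bool}$ as a lattice morphism, and since it sends each $\overline{x}$ to the complement of $\psi(x)$ it preserves complements, hence is a morphism of Boolean algebras extending $\varphi$. Uniqueness is automatic, as $L_1$ generates $L_{Bool}$ and any Boolean extension must send $\overline{x}$ to the complement of $\varphi(x)$. I expect this last Boolean-algebra reduction to be the crux; everything else is bookkeeping layered on top of Theorem \ref{thEntRel1}.
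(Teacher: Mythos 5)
Your proof is correct and complete; the paper itself gives no argument for this proposition, deferring to \cite{cc}, and what you write is exactly the intended route: the cut rule for $\vdash_{L_1}$ reduces to a single cut in $L$ after splitting on whether the cut element is barred, Theorem \ref{thEntRel1} then delivers the lattice together with the conservativity statement, and the Boolean-algebra equivalence $a\wedge b'\leq a'\vee b \Longleftrightarrow a\wedge b'\wedge\neg a'\wedge\neg b=0$ carries the universal property. The only step stated a little quickly is that the order embedding $L\to L_{Bool}$ is a lattice embedding, but this is immediate because the relations imposed on the generators include, via $\vdash_L$, the lattice relations of $L$ (e.g.\ $x,y\vdash_{L_1} x\wedge y$ and $x\wedge y\vdash_{L_1} x$), so the canonical map is a $0,1$-lattice morphism and injectivity suffices.
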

%--- end-proposition----------------------------------------
%-----------------begin proof------------------
\begin{proof}{Proof}
See \cite{cc}.
\end{proof}
%-----------------end proof------------------
Notice that by Theorem \ref{thEntRel1} we have
$x\;\vdash_{L}\; y$ \ssi $x\;\vdash_{L_1}\; y$ hence the canonical
map $L\rightarrow L_1$ is one-to-one and $L$
can be identified to a subset of $L_1$.

\subsection{Spectrum and completeness theorem}

  The {\em spectrum} of the lattice $L$, written  $\Spec(L)$ is defined
as the set $\Hom(L,\Deux)$. It is isomorphic to the ordered set of
all detachable prime ideals. The order relation is then
reverse inclusion.
We have $\Spec(\Deux)\simeq \Un$,  $\Spec(\Trois)\simeq \Deux$,
$\Spec(\Quatre)\simeq \Trois$, etc\ldots

%--- Proposition{propTr2}--------------
\begin{proposition}
\label{propTr2} \Tcgi
If $(J,U)$ is an \proi which does not collapse
then there exists
$\varphi\in\Spec(L)$
such that $J\subseteq \varphi^{-1}(0)$  and
$U\subseteq \varphi^{-1}(1)$.
In particular if $a\not\leq b$, there exists $\varphi\in\Spec(L)$ such
that
$\varphi(a)= 1$ and $\varphi(b)= 0$.
Also, if $L\neq \Un$, $\Spec(L)$ is nonempty.
\end{proposition}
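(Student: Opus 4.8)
The plan is to read off $\varphi$ from a two-valued model of a propositional theory attached to $L$, to produce that model by the completeness theorem, and thereby to reduce the whole statement to a single equivalence: the theory is formally consistent if, and only if, $(J,U)$ does not collapse.

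First I would introduce one propositional variable $P_a$ for each $a\in L$, with intended meaning ``$\varphi(a)=1$'', and form the theory $\Theta$ whose axioms are the clauses $\bigwedge_{a\in A}P_a\to\bigvee_{b\in B}P_b$ for every pair $A,B\in\Pf(L)$ with $A\vdash_L B$, together with $P_y$ for each $y\in U$ and $\neg P_x$ for each $x\in J$. A two-valued model $v$ of $\Theta$ is exactly a map $\varphi\colon L\to\Deux$, given by $\varphi(a)=1$ when $v(P_a)$ is true and $\varphi(a)=0$ otherwise, which is a lattice morphism sending $J$ to $0$ and $U$ to $1$: indeed the clauses attached to the valid entailments $\{a\vi b\}\vdash_L\{a\}$, $\{a\vi b\}\vdash_L\{b\}$ and $\{a,b\}\vdash_L\{a\vi b\}$ force $P_{a\vi b}\leftrightarrow P_a\wedge P_b$, those attached to $\vu$ force $P_{a\vu b}\leftrightarrow P_a\vee P_b$, and $\emptyset\vdash_L\{1\}$, $\{0\}\vdash_L\emptyset$ force $\varphi(1)=1$, $\varphi(0)=0$. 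Thus the models of $\Theta$ are precisely the $\varphi\in\Spec(L)$ asked for, and the completeness theorem delivers one as soon as $\Theta$ is consistent.

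It remains to prove that $\Theta$ is consistent if, and only if, $(J,U)$ does not collapse. One direction is soundness: if $(J,U)$ collapses then $U\vdash_L J$ by Definition \ref{defProiT}, so $\Theta$ contains the clause $\bigwedge_{y\in U}P_y\to\bigvee_{x\in J}P_x$ while also forcing every $P_y$ (for $y\in U$) true and every $P_x$ (for $x\in J$) false, which is contradictory. The converse is the main obstacle: from a propositional proof of $\bot$ in $\Theta$ I must manufacture a collapse. Since proofs are finite, such a proof invokes only finitely many entailment axioms and finite subsets $U_0\subseteq U$, $J_0\subseteq J$; the task is then to check that the propositional inferences translate, rule for rule, into the structural rules $(R)$, $(M)$, $(T)$ of the entailment relation, so that they reassemble into $U_0\vdash_L J_0$, the cut rule $(T)$ carrying the essential step exactly as in the simultaneous-collapse argument of Theorem \ref{lemColSimT1}. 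Conceptually this is the assertion that derivability in $\Theta$ coincides with $\vdash_L$ modulo the constraints $(J,U)$, which is the content of the fundamental theorem of entailment relations (Theorem \ref{thEntRel1}) applied to the quotient $L/(J=0,U=1)$; inconsistency of $\Theta$ then reads as $U_0\vdash_L J_0$, that is, collapse.

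Finally I would read off the two consequences by specialisation. Taking $(J,U)=(\{b\},\{a\})$, non-collapse means $\{a\}\not\vdash_L\{b\}$, i.e. $a\not\leq b$, and the morphism produced has $\varphi(a)=1$ and $\varphi(b)=0$. Taking $(J,U)=(\emptyset,\emptyset)$, collapse means $\emptyset\vdash_L\emptyset$, i.e. $1\leq 0$, i.e. $L=\Un$; hence if $L\neq\Un$ the idealistic prime $(\emptyset,\emptyset)$ does not collapse and $\Spec(L)$ is nonempty.
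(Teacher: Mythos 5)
Your proposal is correct and follows essentially the same route as the paper, whose proof is the one-line appeal ``this follows from the completeness theorem for geometric theories (see Appendix)'': you have simply unpacked that appendix argument, encoding $L$ with the constraints $J=0$, $U=1$ as a (geometric) propositional theory whose models are exactly the desired $\varphi\in\Spec(L)$ and whose formal consistency is equivalent, via the generated lattice $L/(J=0,U=1)$ and Proposition \ref{propIdealFiltre}, to non-collapse of $(J,U)$.
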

%--- end-proposition----------------------------------------

\begin{proof}{Proof}
This follows from the completeness theorem for geometric theories
(see Appendix).
\end{proof}

A corollary is the following representation theorem
(Birkhoff theorem)
%--- Theorem{thRep}--------------------
\begin{theorem}
\label{thRep} {\em (Representation theorem)}
\Tcgi The map
$\theta_L: L\rightarrow \cP(\Spec(L))$ defined by
$a\mapsto \left\{\varphi\in \Spec(L)\; ;\; \varphi(a)= 1 \right\}$
is an injective map of distributive lattice. This means that any
distributive
lattice can be represented as a lattice of subsets of a set.
\end{theorem}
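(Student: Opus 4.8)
The plan is to split the statement into its two assertions, namely that $\theta_L$ is a morphism of distributive lattices and that it is injective, and to confine all use of the \tcgz to the second.

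First I would verify the morphism property directly, using only that each element of $\Spec(L)=\Hom(L,\Deux)$ is by definition a lattice morphism into the two-element lattice $\Deux$. Since $\varphi(0)=0$ and $\varphi(1)=1$ for every such $\varphi$, we get $\theta_L(0)=\emptyset$ and $\theta_L(1)=\Spec(L)$. For the binary operations one computes in $\Deux$: there $\varphi(a)\vu\varphi(b)=1$ exactly when $\varphi(a)=1$ or $\varphi(b)=1$, and $\varphi(a)\vi\varphi(b)=1$ exactly when both equal $1$. Combined with $\varphi(a\vu b)=\varphi(a)\vu\varphi(b)$ and $\varphi(a\vi b)=\varphi(a)\vi\varphi(b)$, this gives $\theta_L(a\vu b)=\theta_L(a)\cup\theta_L(b)$ and $\theta_L(a\vi b)=\theta_L(a)\cap\theta_L(b)$. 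Hence $\theta_L$ is a morphism from $L$ into the powerset lattice $\cP(\Spec(L))$.

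For injectivity, since $\theta_L$ preserves both operations it is enough to show that it reflects the order, that is, that $a\leq b$ holds \ssi $\theta_L(a)\subseteq\theta_L(b)$; injectivity then follows by applying this to the two inclusions contained in an equality $\theta_L(a)=\theta_L(b)$ and invoking antisymmetry. The direction $a\leq b\Rightarrow\theta_L(a)\subseteq\theta_L(b)$ is immediate from the monotonicity of each $\varphi$. The converse I would prove contrapositively: if $a\not\leq b$, Proposition \ref{propTr2} furnishes a $\varphi\in\Spec(L)$ with $\varphi(a)=1$ and $\varphi(b)=0$, i.e.\ a point lying in $\theta_L(a)$ but not in $\theta_L(b)$, so $\theta_L(a)\not\subseteq\theta_L(b)$.

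The only non-routine ingredient is the existence of enough separating points, which is exactly the content of Proposition \ref{propTr2} and which in turn rests on the \tcgz; this is why the theorem is flagged as dependent on that principle. Everything else is a mechanical verification, so the ``hard part'' has already been isolated upstream. Finally, the closing sentence of the statement, namely that every distributive lattice is a lattice of subsets of a set, is merely a restatement of the fact that $\theta_L$ is an injective morphism, and so requires no additional argument once the two parts above are in hand.
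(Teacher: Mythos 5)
Your proposal is correct and follows exactly the route the paper intends: the paper presents Theorem \ref{thRep} as an immediate corollary of Proposition \ref{propTr2}, whose separation clause ($a\not\leq b$ yields $\varphi$ with $\varphi(a)=1$, $\varphi(b)=0$) is precisely the ingredient you isolate for injectivity, the morphism property being the routine pointwise verification you describe. Nothing is missing.
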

%--- end-theorem-----------------------------------------

Another corollary is the following proposition.
%--- Proposition{propRep2}-------------
\begin{proposition}
\label{propRep2}
\Tcgi Let $\varphi:L\rightarrow L'$ a map of \trdisz; $\varphi$
is injective \ssi $\Spec(\varphi):\Spec(L')\rightarrow \Spec(L)$ is
surjective.
\end{proposition}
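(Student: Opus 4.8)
The plan is to use that $\Spec(\varphi)$ acts by precomposition, sending $\psi\in\Spec(L')=\Hom(L',\Deux)$ to $\psi\circ\varphi\in\Hom(L,\Deux)=\Spec(L)$, and to play this off against Proposition \ref{propTr2}. Throughout I rely on the elementary remark that an injective morphism of lattices reflects the order: if $\varphi(u)\leq\varphi(v)$ then $\varphi(u)=\varphi(u)\vi\varphi(v)=\varphi(u\vi v)$, so injectivity gives $u=u\vi v$, \cad $u\leq v$.

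For the direction ``$\Spec(\varphi)$ surjective $\Rightarrow$ $\varphi$ injective'' I would argue contrapositively at the level of elements. Suppose $\varphi(a)=\varphi(b)$ with $a\neq b$; \spdg $a\not\leq b$. By Proposition \ref{propTr2} there is $\chi\in\Spec(L)$ with $\chi(a)=1$ and $\chi(b)=0$. Surjectivity of $\Spec(\varphi)$ yields $\psi\in\Spec(L')$ with $\psi\circ\varphi=\chi$, whence $1=\chi(a)=\psi(\varphi(a))=\psi(\varphi(b))=\chi(b)=0$ in $\Deux$, a contradiction. Hence $\varphi$ is injective. This direction is short and uses only the point-separation half of Proposition \ref{propTr2}.

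The substantial direction is ``$\varphi$ injective $\Rightarrow$ $\Spec(\varphi)$ surjective''. Fix $\chi\in\Spec(L)$ and set $I=\chi^{-1}(0)$, $F=\chi^{-1}(1)$, so that $L=I\cup F$. I want a point $\psi$ of $L'$ with $\varphi(I)\subseteq\psi^{-1}(0)$ and $\varphi(F)\subseteq\psi^{-1}(1)$; any such $\psi$ then satisfies $\psi\circ\varphi=\chi$, since every $x\in L$ lies in $I$ or in $F$ and is mapped accordingly. By the completeness theorem (the mechanism behind Proposition \ref{propTr2}, applied to the possibly infinite pair $(\varphi(I),\varphi(F))$) such a $\psi$ exists provided this pair does not collapse, \cad provided there are no finite $I_0\subseteq I$, $F_0\subseteq F$ with $\varphi(F_0)\vdash_{L'}\varphi(I_0)$.

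The heart of the proof, and the step I expect to be the main obstacle, is ruling out such a finite collapse. A relation $\varphi(F_0)\vdash_{L'}\varphi(I_0)$ means $\varphi(\Vi F_0)=\Vi\varphi(F_0)\leq\Vu\varphi(I_0)=\varphi(\Vu I_0)$ in $L'$, using that $\varphi$ preserves $\Vi$ and $\Vu$. By order reflection for the injective $\varphi$ this gives $\Vi F_0\leq\Vu I_0$ in $L$, \cad $F_0\vdash_L I_0$. Applying $\chi$ and using $\chi(\Vi F_0)=1$ (an inf of elements of $F$, the empty inf being $1$) together with $\chi(\Vu I_0)=0$ (a sup of elements of $I$, the empty sup being $0$) yields $1\leq 0$ in $\Deux$, which is absurd. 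Hence no finite collapse occurs, the point $\psi$ exists, and $\Spec(\varphi)(\psi)=\chi$, proving surjectivity. The only delicate points are the careful treatment of empty $I_0$ or $F_0$ and the (classical) passage from ``no finite collapse'' to an actual point, which is precisely where the completeness theorem is invoked.
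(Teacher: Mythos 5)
Your proof is correct and follows essentially the same route as the paper: the easy direction via the point-separation statement of Proposition \ref{propTr2}, and the hard direction by extending the pair $(I,F)=(\chi^{-1}(0),\chi^{-1}(1))$ along the injection to a point of $L'$ via the compactness/completeness theorem. The only difference is that you spell out the non-collapse verification (order reflection for an injective lattice morphism, plus the empty inf/sup conventions), which the paper leaves implicit when it simply invokes the compactness theorem.
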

%--- end-proposition----------------------------------------
%-----------------begin proof------------------
\begin{proof}{Proof}
We have the equivalence
%-----------------begin $$----------------
$$ a\not=  b\quad \Longleftrightarrow\quad  a\vi b\not=  a\vu b\quad
\Longleftrightarrow\quad
a\vu b\not\leq a\vi b
$$
%-----------------end $$------------------
Assume that $\Spec(\varphi)$ is surjective. If $a\not=  b$ in $L$, take
$a'= \varphi(a)$, $b'= \varphi(b)$ and let
   $\psi\in\Spec(L)$ be such that $\psi(a\vu b)= 1$ and
  $\psi(a\vi b)= 0$. Since  $\Spec(\varphi)$ is surjective there exists
$\psi'\in\Spec(L')$ such that $\psi= \psi'\varphi$ hence  $\psi'(a'\vu
b')= 1$ and
  $\psi'(a'\vi b')= 0$, hence $a'\vu b'\not\leq a'\vi b'$ and $
a'\not=  b'$.\\
Suppose that  $\varphi$ is injective. We identify $L$ to a sublattice of
$L'$. If $\psi\in\Spec(L)$, take $I= \psi^{-1}(0)$ and $F= \psi^{-
1}(1)$.
By the compactness theorem (see appendix),
there exists $\psi'\in\Spec(L')$ such that $\psi'(I)=0$ and
  $\psi'(F)=  1$, which means $\psi= \psi'\circ\varphi$.
\end{proof}
%-----------------end proof------------------

Of course, these three last results are hard to interpret in
a computational way. An intuitive interpretation is that we
can proceed ``as if'' any distributive lattice is a lattice of
subsets of a set. The goal of Hilbert's program is to give
a precise meaning to this sentence, and explain  what is meant
by ``as if'' there.

%--- SUBsection{subsecTrd3}--- Krull ------------
\section{Krull dimension of \trdisz}
\label{subsecTrd3}
%-----------------------------------------

\subsection{Definition of $\Kr_\ell(L)$}

To develop a suitable constructive theory of the Krull dimension
of a distributive lattice we have to find a constructive counterpart of
the notion of increasing chains of prime ideals.

\begin{definition}
%-----------------begin item------------------
To any distributive lattice $L$ and $\ell\in \mathbb{N}$ we associate a
distributive lattice
$\Kr_{\ell}(L)$ which is the lattice defined by the generators
$\varphi_i(x)$ for $i\leq \ell$ and $x\in L$ (thus we have
$\ell+1$ disjoint copies of $L$ and we let $\varphi_i$ be
the bijection between $L$ and the $i$th copy) and relations
\begin{itemize}
\item $\vdash \varphi_i(1)$
\item $\varphi_i(0)\vdash$
\item $\varphi_i(a),\varphi_i(b)\vdash \varphi_i(a\wedge b)$
\item $\varphi_i(a\vee b)\vdash \varphi_i(a),\varphi_i(b)$
\item $\varphi_i(a)\vdash \varphi_i(b)$ whenever $a\leq b$ in $L$
\item $\varphi_{i+1}(a)\vdash \varphi_{i}(a)$ for $i<\ell$
\end{itemize}
 Let $S$ be the disjoint union $\bigcup \varphi_i(L)$ and
$\vdash _S$ the entailment relation generated by these relations.
\end{definition}

 From this definition, we get directly the following theorem.

\begin{theorem}
\label{univ}
The maps $\varphi_i$ are morphisms from the lattice
$L$ to the lattice $\Kr_{\ell}(L)$. Furthermore
the lattice  $\Kr_{\ell}(L)$ with the maps $\varphi_i$ is then
a solution of the following universal problem:
to find a \trdi $K$ and $\ell+1$ \homos
$\varphi_0\geq \varphi_1\geq \cdots\geq \varphi_\ell$ from $L$ to $K$
such that, for any lattice $ L'$ and any morphism $\psi_0\geq
\psi_1\geq \cdots\geq \psi_\ell\in\Hom(L,L')$ we have one and only one
morphism $\eta :K\rightarrow L'$ such that $\eta \varphi_0= \psi_0$,
$\eta \varphi_1= \psi_1,$ $\ldots$, $\eta \varphi_\ell= \psi_\ell$.
%-----------------end item------------------
\end{theorem}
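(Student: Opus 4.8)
The plan is to read the entire statement off the presentation of $\Kr_\ell(L)$ by the entailment relation $\vdash_S$, using Theorem~\ref{thEntRel1}. The principle I would invoke is the universal property of the lattice attached to an entailment relation: a morphism $\eta:\Kr_\ell(L)\to L'$ is the same datum as a set map $f:S\to L'$ with $A\vdash_S B\Rightarrow \Vi f(A)\leq\Vu f(B)$ in $L'$, the correspondence being $\eta(\varphi_i(x))=f(\varphi_i(x))$. The key simplification is that it suffices to verify this implication for the six generating families: for a fixed $f$, the set of pairs $(A,B)$ with $\Vi f(A)\leq\Vu f(B)$ is reflexive, monotone and transitive (transitivity being exactly the cut rule, valid in any distributive lattice, as established at the start of Section~\ref{secKrullTreil}), hence it is an entailment relation and therefore contains the one generated by the six relations. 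This reduces everything to elementary checks.

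For the first assertion I would translate each relation through $A\vdash_{\Kr_\ell(L)}B\Leftrightarrow \Vi A\leq\Vu B$, which holds by Theorem~\ref{thEntRel1}. The relations $\vdash\varphi_i(1)$ and $\varphi_i(0)\vdash$ give $\varphi_i(1)=1$ and $\varphi_i(0)=0$. For meets, $\varphi_i(a),\varphi_i(b)\vdash\varphi_i(a\wedge b)$ gives $\varphi_i(a)\wedge\varphi_i(b)\leq\varphi_i(a\wedge b)$, while the monotonicity relation applied to $a\wedge b\leq a$ and $a\wedge b\leq b$ gives the reverse inequality; dually for joins, using $\varphi_i(a\vee b)\vdash\varphi_i(a),\varphi_i(b)$ together with $a,b\leq a\vee b$. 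Hence each $\varphi_i$ is a morphism. Finally $\varphi_{i+1}(a)\vdash\varphi_i(a)$ says $\varphi_{i+1}(a)\leq\varphi_i(a)$ for all $a$, i.e.\ $\varphi_0\geq\varphi_1\geq\cdots\geq\varphi_\ell$.

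For the universal property, given morphisms $\psi_0\geq\cdots\geq\psi_\ell:L\to L'$ I would set $f(\varphi_i(x))=\psi_i(x)$ and check that $f$ validates the six generating relations in $L'$: the first four hold because each $\psi_i$ preserves $1,0,\wedge,\vee$; the monotonicity relation holds because $\psi_i$ is order preserving; and $\varphi_{i+1}(a)\vdash\varphi_i(a)$ holds because $\psi_{i+1}\leq\psi_i$. By the principle above $f$ extends to a morphism $\eta:\Kr_\ell(L)\to L'$ with $\eta\varphi_i=\psi_i$. Uniqueness is immediate, since the $\varphi_i(x)$ generate $\Kr_\ell(L)$ and $\eta$ is pinned down on them by $\eta\varphi_i(x)=\psi_i(x)$. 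Conversely, the conditions ``each $\psi_i$ a morphism, the family decreasing'' are also necessary for $f$ to validate the relations, so the correspondence is a bijection and $\Kr_\ell(L)$ solves the universal problem.

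I do not anticipate a genuine obstacle: the content is the bookkeeping that the six relations are jointly equivalent to ``$\ell+1$ morphisms forming a decreasing chain'', the only non-formal point being that preservation of $\wedge$ and $\vee$ must be obtained by combining the explicit relations with monotonicity rather than read off directly. The single structural fact doing real work is that the relations validated by a map form an entailment relation, which rests on the cut rule; this is exactly where Theorem~\ref{thEntRel1} is indispensable.
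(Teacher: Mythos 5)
Your argument is correct and is essentially the expansion of what the paper intends: the paper offers no written proof beyond the remark that the theorem follows directly from the definition (and the closing observation in the proof of Theorem~\ref{thKrJoy} that $\vdash_S$ is the least entailment relation making the $\varphi_i$ a non-increasing chain of morphisms), and your reduction via the universal property of a presentation, checking only the six generating relations because the relations validated by a map form an entailment relation, is exactly that intended argument made explicit.
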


 The next theorem is the main result of this paper, and uses
crucially the notion of entailment relation.

\begin{theorem}
\label{thKrJoy}
If $U_i$ and $J_i$ $(i= 0,\ldots,\ell)$ are finite subsets
of $L$ we have in $\Kr_{\ell}(L)$
%-----------------begin $$----------------
$$ \varphi_0(U_0)\wedge\ldots\wedge\varphi_\ell(U_\ell) \leq
\varphi_0(J_0)\vee\ldots\vee\varphi_\ell(J_\ell)
$$
\ssi
$$ \varphi_0(U_0),\ldots,\varphi_\ell(U_\ell) \,\vdash_S\,
\varphi_0(J_0),\ldots,\varphi_\ell(J_\ell)
$$
%-----------------end $$------------------
\ssi there exist $x_1,\ldots,x_\ell\in L$ such that
  (where $\vda$ is the \entrel of $L$):
$$\begin{array}{rcl}
x_1,\;  U_0& \vda  &  J_0  \\
x_2,\;  U_1& \vda  &  J_1 ,\; x_1  \\
     \vdots\quad & \vdots  & \quad\vdots  \\
x_\ell,\;  U_{\ell-1}& \vda& J_{\ell-1} ,\; x_{\ell-1}  \\
  U_\ell& \vda  &  J_\ell ,\; x_\ell  \\
\end{array}$$
\end{theorem}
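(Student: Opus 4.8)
The plan is to prove the chain of equivalences (1) $\Leftrightarrow$ (2) $\Leftrightarrow$ (3). Throughout write $A=\bigcup_{i}\varphi_i(U_i)$ and $B=\bigcup_i\varphi_i(J_i)$ for the two finite subsets of $S=\bigcup_i\varphi_i(L)$ appearing on the two sides; recall that, the copies being disjoint, every element of $\Pf(S)$ decomposes uniquely in this form. The equivalence (1) $\Leftrightarrow$ (2) is immediate from the fundamental theorem \ref{thEntRel1}: statement (1) is exactly $\Vi A\leq\Vu B$ in $\Kr_\ell(L)$, that is $A\vdash_{\Kr_\ell(L)}B$, and \ref{thEntRel1} identifies this with $A\vdash_S B$, which is (2). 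So all the work lies in (2) $\Leftrightarrow$ (3). I introduce the relation $\vdash'$ on $\Pf(S)$ by declaring $A\vdash' B$ to hold exactly when there exist $x_1,\dots,x_\ell\in L$ satisfying the displayed triangular system (with the boundary conventions $x_0=0$, $x_{\ell+1}=1$, which make every row uniform as $x_{k+1},U_k\vda J_k,x_k$). My goal is to prove $\vdash'=\vdash_S$.

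For (3) $\Rightarrow$ (2), i.e. $\vdash'\subseteq\vdash_S$, I would apply each morphism $\varphi_i$ to the $i$-th row. Since $\varphi_i$ is a lattice morphism it sends a relation $C\vda D$ of $L$ to $\varphi_i(C)\vdash_S\varphi_i(D)$ (again by \ref{thEntRel1}), so row $i$ becomes an $S$-entailment with $\varphi_i(x_{i+1})$ on its left and $\varphi_i(x_i)$ on its right. I then eliminate $x_\ell,\dots,x_1$ in turn: the variable $x_i$ occurs as $\varphi_i(x_i)$ on the right of level $i$ and as $\varphi_{i-1}(x_i)$ on the left of level $i-1$; using the generating relation $\varphi_i(x_i)\vdash\varphi_{i-1}(x_i)$ one cut replaces $\varphi_i(x_i)$ by $\varphi_{i-1}(x_i)$, and a second cut on $\varphi_{i-1}(x_i)$ fuses the two rows. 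Iterating this telescoping accumulates all the $\varphi_i(U_i)$ on the left and all the $\varphi_i(J_i)$ on the right while deleting every $x_i$, which is precisely (2). This is exactly where the cut rule is essential, just as in Theorem \ref{lemColSimT1}.

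For (2) $\Rightarrow$ (3), i.e. $\vdash_S\subseteq\vdash'$, I would check that $\vdash'$ is an entailment relation containing all the generators of $\vdash_S$; minimality of $\vdash_S$ then yields the inclusion. Reflexivity and monotonicity are immediate, the same witnesses working by monotonicity of $\vda$ in $L$. Each generating relation lives on a single level $i$, so I take $x_1=\dots=x_i=0$ and $x_{i+1}=\dots=x_\ell=1$: every off-level row reads $0\vda 0$ or $1\vda 1$, while the level-$i$ row collapses, after discarding the harmless $0$ on the right and $1$ on the left, to the generating relation itself; for the order relation $\varphi_{i+1}(a)\vdash\varphi_i(a)$ one sets in addition $x_{i+1}=a$.

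The real obstacle is transitivity of $\vdash'$, the cut on a single element $\varphi_j(c)$. Suppose witnesses $(y_k)$ realise $A,\varphi_j(c)\vdash' B$ and $(z_k)$ realise $A\vdash' B,\varphi_j(c)$. I would set
$$ x_k=y_k\vu z_k \quad\mbox{for } k\leq j,\qquad x_k=y_k\vi z_k \quad\mbox{for } k>j. $$
For a level $k<j$ the rows $y_{k+1},U_k\vda J_k,y_k$ and $z_{k+1},U_k\vda J_k,z_k$ are weakened on the right to the common conclusion $J_k,x_k$ (since $\{y_k,z_k\}$ contributes $y_k\vu z_k$) and then combined by $\vu$-distributivity on the left into $x_{k+1},U_k\vda J_k,x_k$; dually, for $k>j$ one strengthens the left to the common $x_{k+1},U_k$ and combines by $\vi$-distributivity on the right. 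At the cut level $j$, the rows $y_{j+1},U_j,c\vda J_j,y_j$ and $z_{j+1},U_j\vda J_j,c,z_j$ are fused by one genuine cut on $c$ in $L$, giving $y_{j+1},z_{j+1},U_j\vda J_j,y_j,z_j$, which is $x_{j+1},U_j\vda J_j,x_j$ with meet on the left and join on the right. The boundary values $x_0=0$ and $x_{\ell+1}=1$ are respected, so $(x_k)$ is admissible and witnesses $A\vdash' B$. The delicate points are the join/meet split pivoting at the cut level and the fact that a single honest cut is performed only at level $j$, all other levels being resolved by the two distributivity rules.
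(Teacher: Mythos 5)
Your proposal is correct and follows essentially the same route as the paper: the first equivalence via Theorem \ref{thEntRel1}, and the second by showing that the triangular-system relation is an entailment relation containing the generators, with the cut rule handled by exactly the paper's witness construction (joins $y_k\vu z_k$ at and below the cut level, meets $y_k\vi z_k$ above it, and a single genuine cut in $L$ at the pivot level). You merely make explicit the two steps the paper dismisses as clear, namely that the generators satisfy the triangular condition and that the condition is derivable in $\vdash_S$ by telescoping cuts along $\varphi_i(x_i)\vdash\varphi_{i-1}(x_i)$.
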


\begin{proof}{Proof}
The equivalence between the first and the second statement
follows from Theorem \ref{thEntRel1}.

We show next that the relation on $\Pf(S)$ described in the
statement of the theorem is indeed an entailment relation. The only
point that needs explanation is the cut rule. To simplify notations,
we take
$\ell= 3.$
We have then 3 possible cases, and we analyse only one case, where
$X,\varphi_1(z)\vdash_S Y$ and $X\vdash_S Y,\varphi_1(z)$, the other
cases being similar.
By hypothesis we have $x_1,x_2,x_3,y_1,y_2,y_3$ such that
$$\begin{array}{rclcrcl}
  x_1,\; U_0& \vda  & J_0  &\qquad &
  y_1,\; U_0& \vda  & J_0  \\
  x_2,\; U_1,\; z& \vda  & J_1 ,\; x_1  &&
  y_2,\; U_1& \vda  & J_1 , y_1 ,\; z \\
  x_3,\; U_{2}& \vda  & J_{2} ,\; x_{2}  &&
  y_3, U_{2}& \vda  & J_{2} , y_{2}  \\
  U_3& \vda  & J_3 ,\; x_3  &&
  U_3& \vda  & J_3 ,\; y_3
\end{array}$$
%---------------------end array--------------
The two \entrels on the second line give
$$\begin{array}{rclcrcl}
  x_2,\;y_2,\; U_1,\; z& \vda  & J_1 ,\; x_1,\;y_1 &\qquad
  x_2,\;y_2,\; U_1& \vda  & J_1 ,\; x_1 ,\; y_1 ,\; z \end{array}$$
%--------------------begin array---------------
hence by cut
$$\begin{array}{rclcrcl}
  x_2,\;y_2,\; U_1& \vda  & J_1 ,\; x_1,\;y_1
\end{array}$$
\cad
$$\begin{array}{rclcrcl}
  x_2\vi y_2,\; U_1& \vda  & J_1 ,\; x_1\vu y_1
\end{array}$$
Finally, using distributivity
$$\begin{array}{rclcrcl}
  (x_1\vu y_1),\; U_0& \vda  & J_0   \\
  (x_2\vi y_2),\; U_1& \vda&J_1,\;(x_1\vu y_1)\\
  (x_3\vi y_3),\; U_{2}& \vda  & J_{2},\;(x_2\vi y_2)  \\
  U_3& \vda  & J_3, \;(x_3\vi y_3)
\end{array}$$
and hence $\varphi_0(U_0),\dots,\varphi_3(U_3)\,\vdash_S\,
\varphi_0(J_0),\dots,\varphi_3(J_3)$.\\
Finally it is left to notice that the entailment relation
we have defined is clearly the least possible relation
ensuring the  $\varphi_i$ to form a non-increasing chain of
morphisms.
\end{proof}
%-----------------end proof------------------

  Notice that the morphisms $\varphi_i$ are injective: it is easily
seen that for $a,b \in L$ the relation
$\varphi_i(a)\vdash_S\varphi_i(b)$ implies
$a\vda b$, and hence that  $\varphi_i(a)= \varphi_i(b)$ implies
$a= b.$

%--- SUBSUBsection{ Chaines ideales}-----------
\subsection{Partially specified chains of prime ideals}
%-----------------------------------------
%--- Definition{defprochTreil}---------
\begin{definition}
\label{defprochTreil}
%-----------------begin item------------------
In a \trdi $L$, a {\em partial specification for a chain of prime ideals}
(that we shall call {\em \procz}) is defined as follows.
An \prolo $\ell$ is a list of $\ell+1$ \prois of $L$:
$\cC= ((J_0,U_0),\ldots,(J_\ell,U_\ell))$.
An \prolo $0$ is nothing but an {\em \proiz}.
%-----------------end item------------------
\end{definition}

We think of an \prolo $\ell$ as a partial specification of
an increasing chains of prime ideals $P_0,\ldots,P_\ell$
such that $J_i\subseteq P_i$, $U_i\cap P_i= \emptyset$,
$(i= 0,\ldots,\ell)$.

\begin{definition}
We say that an \proc $((J_0,U_0),\ldots,(J_\ell,U_\ell))$
{\em collapses} \ssi
we have in $\Kr_{\ell}(L)$
$$ \varphi _0(U_0),\dots,\varphi_{\ell}(U_{\ell}) \vdash _S
   \varphi _0(J_0),\dots,\varphi_{\ell}(J_{\ell})$$
\end{definition}

Thus an \proc $((J_0,U_0),\ldots,(J_\ell,U_\ell))$ collapses in
$L$ \ssi the \proi
  $\cP= (\varphi_0(J_0),\ldots,\varphi_\ell(J_\ell);
\varphi_0(U_0),\ldots,\varphi_\ell(U_\ell))$
collapses in $\Kr_\ell(L)$. From the completeness theorem we deduce the
following result which justifies this idea of
partial specification.
%--- Theorem{th.nstformelTreil}--------
\begin{theorem}
\label{th.nstformelTreil} {\em (formal \nst for chains of prime ideals)}
\Tcgi Let $L$ be a  \trdi and
$((J_0,U_0),\ldots,(J_\ell,U_\ell))$ be an \proc in $L$. \Propeq
%-----------------begin item------------------
\begin{itemize}
\item [$(a)$] There exist $\ell+1$ prime ideals $P_0\subseteq
\cdots\subseteq
P_\ell$ such that
  $J_i\subseteq P_i$, $U_i\cap P_i= \emptyset $, $(i= 0,\ldots,\ell)$.
\item [$(b)$] The \proc does not collapse.
\end{itemize}
%-----------------end item------------------
\end{theorem}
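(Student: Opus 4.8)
The plan is to reduce the statement about a \emph{chain} of prime ideals of $L$ to a statement about a single prime ideal (a point of the spectrum) of the lattice $\Kr_\ell(L)$, and then to invoke the completeness-theorem result already established in Proposition~\ref{propTr2}. The bridge is the universal property of Theorem~\ref{univ}: taking $L'=\Deux$ there, we obtain a bijection between $\Hom(\Kr_\ell(L),\Deux)=\Spec(\Kr_\ell(L))$ and the non-increasing chains $\psi_0\geq\psi_1\geq\cdots\geq\psi_\ell$ of morphisms $L\to\Deux$, namely the $\psi_i\in\Spec(L)$ with $\eta\varphi_i=\psi_i$. Since a morphism $\psi:L\to\Deux$ corresponds to the prime ideal $\psi^{-1}(0)$, and $\psi_{i+1}\leq\psi_i$ translates exactly into $\psi_i^{-1}(0)\subseteq\psi_{i+1}^{-1}(0)$, points of $\Spec(\Kr_\ell(L))$ correspond precisely to increasing chains $P_0\subseteq\cdots\subseteq P_\ell$ of prime ideals of $L$.

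For $(a)\Rightarrow(b)$ I would show that the existence of the chain makes collapse impossible. Given $P_0\subseteq\cdots\subseteq P_\ell$ with $J_i\subseteq P_i$ and $U_i\cap P_i=\emptyset$, set $\psi_i:L\to\Deux$ with $\psi_i^{-1}(0)=P_i$; the inclusions give $\psi_0\geq\cdots\geq\psi_\ell$, so Theorem~\ref{univ} yields $\eta:\Kr_\ell(L)\to\Deux$ with $\eta\varphi_i=\psi_i$. The constraints say $\psi_i(J_i)=\{0\}$ and $\psi_i(U_i)=\{1\}$, hence $\eta(\varphi_i(J_i))=\{0\}$ and $\eta(\varphi_i(U_i))=\{1\}$. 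If the chain collapsed we would have $\bigwedge_i\bigwedge\varphi_i(U_i)\leq\bigvee_i\bigvee\varphi_i(J_i)$ in $\Kr_\ell(L)$; applying $\eta$ sends the left-hand side to $1$ (an inf of $1$'s, or an empty inf) and the right-hand side to $0$, giving $1\leq 0$ in $\Deux$, absurd. Here the conventions $\bigwedge\emptyset=1$ and $\bigvee\emptyset=0$ are exactly what is needed when some $U_i$ or $J_i$ is empty.

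For $(b)\Rightarrow(a)$ I would recall that non-collapse of the chain means, by definition, that the idealistic prime $\cP=(\varphi_0(J_0),\ldots,\varphi_\ell(J_\ell);\varphi_0(U_0),\ldots,\varphi_\ell(U_\ell))$ does not collapse in $\Kr_\ell(L)$. Proposition~\ref{propTr2}, applied to the single lattice $\Kr_\ell(L)$, then produces $\eta\in\Spec(\Kr_\ell(L))$ with $\varphi_i(J_i)\subseteq\eta^{-1}(0)$ and $\varphi_i(U_i)\subseteq\eta^{-1}(1)$ for every $i$. Putting $\psi_i=\eta\circ\varphi_i$ and $P_i=\psi_i^{-1}(0)$, the inequalities $\varphi_0\geq\cdots\geq\varphi_\ell$ (coming from the defining relations $\varphi_{i+1}(a)\vdash\varphi_i(a)$) give $\psi_0\geq\cdots\geq\psi_\ell$ and hence $P_0\subseteq\cdots\subseteq P_\ell$, while the two containments translate back into $J_i\subseteq P_i$ and $U_i\cap P_i=\emptyset$, which is precisely $(a)$.

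The genuine mathematical content sits entirely in Proposition~\ref{propTr2}, i.e.\ in the completeness theorem for $\Kr_\ell(L)$; everything else is the translation dictionary furnished by Theorem~\ref{univ}. Accordingly, I do not expect a serious obstacle: the one point deserving care is the faithful back-and-forth between prime ideals of $L$, morphisms $L\to\Deux$, points of $\Spec(\Kr_\ell(L))$, and chains, together with the handling of empty $J_i$ or $U_i$ through the sup/inf conventions so that the two directions match up cleanly.
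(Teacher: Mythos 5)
Your proof is correct and follows essentially the same route as the paper: for $(b)\Rightarrow(a)$ the paper likewise applies Proposition~\ref{propTr2} to the non-collapsing \proi $\cP$ in $\Kr_\ell(L)$ and sets $P_i=(\sigma\circ\varphi_i)^{-1}(0)$, while the converse is dismissed as ``direct'' (your explicit verification via Theorem~\ref{univ} is exactly the intended argument).
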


\begin{proof}{Proof}
If $(b)$ holds then  the \proi
  $\cP= (\varphi_0(J_0),\ldots,\varphi_\ell(J_\ell);
\varphi_0(U_0),\ldots,\varphi_\ell(U_\ell))$
does not collapse in $\Kr_{\ell}(L)$. It follows then from Proposition
\ref{propTr2} that there exists $\sigma\in \Spec(\Kr_{\ell}(L))$ such that
$\sigma$ is $0$ on $\varphi_0(J_0),\ldots,\varphi_\ell(J_\ell)$
and $1$ on $\varphi_0(U_0),\ldots,\varphi_\ell(U_\ell))$. We
can then take $P_i = (\sigma\circ\varphi_i)^{-1}(0)$.
That $(a)$ implies $(b)$ is direct.
\end{proof}
%--- end-theorem-----------------------------------------
%-----------------end proof------------------
%--- SUBSUBsection  DimTrdi -------------------
\subsection{Krull dimension of a \trdi }
\label{subsubsecDimTrdi}
%-----------------------------------------

%--- Definition{defDiTr}---------------
\begin{definition}
\label{defDiTr}~
\begin{itemize}
\item [1)] An {\em \proelz} in a  \trdi $L$ is an \proc of the form
$$ ((0,x_1),(x_1,x_2),\ldots,(x_\ell,1))
$$
(with $x_i$ in $L$).
\item [2)] A \trdi $L$ is {\em of dimension $\leq \ell-1$}
iff it satisfies one of the equivalent conditions
%-----------------begin item------------------
\begin{itemize}
\item Any \proel of length $\ell$ collapses.
\item For any sequence $x_1,\dots,x_\ell\in L$ we have
$$
\varphi_0(x_1),\dots,\varphi_{\ell-1}(x_{\ell})
\vda
\varphi_1(x_1),\dots,\varphi_{\ell}(x_{\ell})
$$
in $\Kr_{\ell}(L)$,
\end{itemize}
%-----------------end item------------------
\end{itemize}
%-----------------end item------------------
\end{definition}
%--- end-definition------------------------------------

 The following result shows that this definition coincides
with the classical definition of Krull dimension for lattices.

\begin{theorem}
The completeness theorem implies that the Krull dimension of a
lattice $L$ is $\leq \ell -1$ if, and only if, there is no
strictly increasing chains of prime ideals of length $\ell$.
\end{theorem}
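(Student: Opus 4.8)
The plan is to reduce the whole statement to the formal \nst for chains of prime ideals (Theorem \ref{th.nstformelTreil}), which, granting the completeness theorem, is exactly the device that converts "does not collapse" into the existence of a suitable finite chain of prime ideals. By Definition \ref{defDiTr}, the condition "$\dim L \leq \ell-1$" means that \emph{every} elementary \proc of length $\ell$, that is, every chain of the form $\cC = ((0,x_1),(x_1,x_2),\ldots,(x_\ell,1))$, collapses. So it suffices to establish the contrapositive equivalence: some elementary \proc of length $\ell$ fails to collapse if, and only if, there is a strictly increasing chain $P_0 \subseteq P_1 \subseteq \cdots \subseteq P_\ell$ of $\ell+1$ prime ideals (all inclusions strict).

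For one implication I would start from a non-collapsing elementary chain $\cC$ and apply Theorem \ref{th.nstformelTreil}, obtaining prime ideals $P_0 \subseteq P_1 \subseteq \cdots \subseteq P_\ell$ with $J_i \subseteq P_i$ and $U_i \cap P_i = \emptyset$ for all $i$. Reading off the data of the elementary chain, the constraint $J_i = \{x_i\}$ gives $x_i \in P_i$ for each $i \geq 1$, while the constraint $U_{i-1} = \{x_i\}$ gives $x_i \notin P_{i-1}$ (with $U_0 = \{x_1\}$ forcing $x_1 \notin P_0$). Hence $x_i \in P_i \setminus P_{i-1}$, so $P_{i-1} \neq P_i$ and the inclusion $P_{i-1} \subseteq P_i$ is strict for every $i = 1,\ldots,\ell$; the chain is therefore strictly increasing, and $1 \notin P_\ell$ (from $U_\ell = \{1\}$) records that the top ideal is proper.

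Conversely, given a strictly increasing chain $P_0 \subseteq P_1 \subseteq \cdots \subseteq P_\ell$ with all inclusions strict, I would choose for each $i = 1,\ldots,\ell$ an element $x_i \in P_i \setminus P_{i-1}$ and form the elementary chain $\cC = ((0,x_1),(x_1,x_2),\ldots,(x_\ell,1))$. One checks directly that these same $P_i$ verify condition $(a)$ of Theorem \ref{th.nstformelTreil} for this chain: $x_i \in P_i$ yields $J_i \subseteq P_i$, and $x_{i+1} \notin P_i$ together with $x_1 \notin P_0$ and $1 \notin P_\ell$ (properness) yields $U_i \cap P_i = \emptyset$. By the theorem, $\cC$ does not collapse, so not every elementary chain of length $\ell$ collapses.

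There is no genuine obstacle here beyond Theorem \ref{th.nstformelTreil} itself; the only point requiring care is the index bookkeeping that makes each separating element $x_i$ do double duty, lying inside $P_i$ (the $J$-constraint at level $i$) and outside $P_{i-1}$ (the $U$-constraint at level $i-1$), so that membership and non-membership of a single element simultaneously witness the collapse data and the strictness of the inclusions. Getting this matching right, and recalling that prime ideals are proper so that the endpoints $(0,x_1)$ and $(x_\ell,1)$ translate correctly, is all that the proof really needs.
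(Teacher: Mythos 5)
The paper states this theorem without any proof, leaving it as an immediate consequence of the formal \nst for chains of prime ideals; your argument via Theorem \ref{th.nstformelTreil} is exactly the intended route and is correct. The index bookkeeping is right: $x_i\in P_i$ comes from $J_i=\{x_i\}$ and $x_i\notin P_{i-1}$ from $U_{i-1}=\{x_i\}$, the endpoints $0\in P_0$ and $1\notin P_\ell$ hold because prime ideals are proper ideals, and you correctly use the easy direction $(a)\Rightarrow(b)$ for one implication and the completeness-dependent direction $(b)\Rightarrow(a)$ for the other.
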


 Using Theorem \ref{thKrJoy}, we get the following characterisation.

\begin{theorem}
\label{dimLatt}
A distributive lattice $L$ is of Krull dimension $\leq {\ell}-1$
if, and only if, for all $x_1,\dots,x_\ell\in L$
there exist $a_1,\dots,a_\ell\in L$ such that
$$ a_1\wedge x_1 = 0,~~~
   a_2\wedge x_2\leq a_1\vee x_1,~~~,\dots,
   a_{\ell}\wedge x_{\ell}\leq   a_{\ell-1}\vee x_{\ell-1},~~~
   1 = a_{\ell}\vee x_{\ell}$$
\end{theorem}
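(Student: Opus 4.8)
The plan is to obtain the statement by specialising Theorem~\ref{thKrJoy} to the \proel attached to the given sequence. By Definition~\ref{defDiTr}, $L$ has Krull dimension $\leq\ell-1$ \ssi every \proelo $\ell$ collapses; that is, \ssi for every $x_1,\dots,x_\ell\in L$ the chain $((0,x_1),(x_1,x_2),\dots,(x_\ell,1))$ collapses. Unwinding the definition of collapse, this says precisely that in $\Kr_\ell(L)$ one has
$$\varphi_0(x_1),\varphi_1(x_2),\dots,\varphi_{\ell-1}(x_\ell),\varphi_\ell(1)\;\vdash_S\;\varphi_0(0),\varphi_1(x_1),\dots,\varphi_\ell(x_\ell).$$
So the whole question reduces to a single entailment in $\Kr_\ell(L)$, and Theorem~\ref{thKrJoy} is exactly the tool that decides such entailments in terms of data living in $L$.

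First I would match the finite subsets above against the pattern $(J_i,U_i)$ of Theorem~\ref{thKrJoy}: here $U_i=\{x_{i+1}\}$ for $0\leq i\leq\ell-1$ and $U_\ell=\{1\}$, while $J_0=\{0\}$ and $J_i=\{x_i\}$ for $1\leq i\leq\ell$. With this dictionary, Theorem~\ref{thKrJoy} asserts that the displayed entailment holds \ssi there exist $a_1,\dots,a_\ell\in L$ (the auxiliary elements of that theorem, renamed to avoid a clash with the fixed $x_i$) satisfying the staircase of relations in $L$
$$\begin{array}{rcl}
a_1,\,x_1 & \vda & 0 \\
a_2,\,x_2 & \vda & x_1,\,a_1 \\
\vdots & \vdots & \vdots \\
a_\ell,\,x_\ell & \vda & x_{\ell-1},\,a_{\ell-1} \\
1 & \vda & x_\ell,\,a_\ell.
\end{array}$$

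It remains only to unfold the \entrel of $L$, where $A\vda B$ means $\Vi A\leq\Vu B$. The first relation reads $a_1\vi x_1\leq 0$, hence $a_1\vi x_1=0$ because $0$ is least; the generic relation reads $a_i\vi x_i\leq x_{i-1}\vu a_{i-1}=a_{i-1}\vu x_{i-1}$; and the last reads $1\leq x_\ell\vu a_\ell$, hence $1=a_\ell\vu x_\ell$. These are verbatim the conditions in the statement, which proves the equivalence. I do not expect any genuine obstacle here: all the combinatorial work (and in particular the verification that the staircase relation is itself an entailment relation, via the cut rule) has already been carried out in Theorem~\ref{thKrJoy}. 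The only thing demanding care is the index bookkeeping when aligning the elementary chain with the $(J_i,U_i)$ pattern and renaming the auxiliary variables; once that correspondence is fixed the result drops out immediately.
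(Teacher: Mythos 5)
Your proposal is correct and follows exactly the route the paper intends: the paper gives no separate argument for Theorem~\ref{dimLatt} beyond the remark that it follows from Theorem~\ref{thKrJoy} applied to the elementary \proc $((0,x_1),(x_1,x_2),\dots,(x_\ell,1))$, which is precisely the specialisation and index bookkeeping you carry out. Your unwinding of the entailment relation of $L$ into the stated chain of inequalities is accurate.
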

%---------------------end array--------------

 In this way we have given a concrete form of the statement
that the \trdi $L$ has a dimension $\leq \ell -1$ in the form
of an  existence of a sequence of inequalities.

  In particular the \trdi $L$ is of dimension $\leq -1$ \ssi $1= 0$
in $L$, and it is of dimension
$\leq 0$ \ssi $L$ is a Boolean algebra (any element has a complement).

\ss We have furthermore.

%--- Lemma{lemDimGen}------------------
\begin{lemma}
\label{lemDimGen}
A \trdi $L$ generated by a set $G$ is of dimension $\leq \ell-1$
\ssi for any sequence $x_1,\dots,x_\ell\in G$
$$
\varphi_0(x_1),\dots,\varphi_{\ell-1}(x_{\ell})
\vda
\varphi_1(x_1),\dots,\varphi_{\ell}(x_{\ell})
$$
in $\Kr_{\ell}(L)$.
\end{lemma}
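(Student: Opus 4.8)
The plan is to prove the nontrivial implication; the converse is immediate, since by Definition \ref{defDiTr} dimension $\leq\ell-1$ means the displayed entailment holds for all $\ell$-tuples from $L$, hence a fortiori for tuples from $G\subseteq L$. So I assume the entailment holds for every $x_1,\dots,x_\ell\in G$ and extend it to arbitrary $x_1,\dots,x_\ell\in L$. Throughout I work with the order of $\Kr_\ell(L)$: by Theorems \ref{thEntRel1} and \ref{thKrJoy} the relation $\vda$ on this lattice is just $\Vi A\leq\Vu B$, and the $\varphi_i$ are lattice morphisms.

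The key is a one–variable stability statement. Fix an index $j\in\{1,\dots,\ell\}$ and finite subsets $\Gamma,\Delta$ of $\Kr_\ell(L)$ (these will later absorb the contributions $\varphi_{i-1}(x_i)$ on the left and $\varphi_i(x_i)$ on the right coming from the coordinates $i\neq j$), and set
$$ T=\left\{x\in L\;;\;\Vi\Gamma\vi\varphi_{j-1}(x)\;\leq\;\Vu\Delta\vu\varphi_j(x)\ {\rm in}\ \Kr_\ell(L)\right\}. $$
I claim $T$ is a sublattice of $L$ containing $0$ and $1$. Indeed $0\in T$ because $\varphi_{j-1}(0)=0$ forces the left side to be $0$, and $1\in T$ because $\varphi_j(1)=1$ forces the right side to be $1$. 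For closure under $\vu$, given $a,b\in T$, distributivity yields $\Vi\Gamma\vi\varphi_{j-1}(a\vu b)=(\Vi\Gamma\vi\varphi_{j-1}(a))\vu(\Vi\Gamma\vi\varphi_{j-1}(b))$, and each disjunct is $\leq\Vu\Delta\vu\varphi_j(a)\vu\varphi_j(b)=\Vu\Delta\vu\varphi_j(a\vu b)$, so $a\vu b\in T$. Dually, for $\vi$ one writes $\Vu\Delta\vu\varphi_j(a\vi b)=(\Vu\Delta\vu\varphi_j(a))\vi(\Vu\Delta\vu\varphi_j(b))$ and uses $\Vi\Gamma\vi\varphi_{j-1}(a\vi b)\leq\Vi\Gamma\vi\varphi_{j-1}(a)\leq\Vu\Delta\vu\varphi_j(a)$ together with the symmetric inequality for $b$. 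The delicate point — the one place the argument could go wrong — is that the single variable $x$ occurs on both sides of $\vda$, under $\varphi_{j-1}$ on the left and under $\varphi_j$ on the right; a join in $x$ is handled by combining on the left and merely enlarging the right, a meet in $x$ dually, and it is exactly this asymmetry that makes $T$ closed under both operations.

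Granting the stability statement I would bootstrap one coordinate at a time. Apply it first with $j=1$, taking $\Gamma,\Delta$ to record fixed values $x_2,\dots,x_\ell\in G$: the hypothesis gives $G\subseteq T$, so the sublattice $T$ contains $G$, $0$ and $1$, and since $G$ generates $L$ we conclude $T=L$; thus the entailment holds for all $x_1\in L$ and all $x_2,\dots,x_\ell\in G$. Using this as the new hypothesis and applying the stability statement with $j=2$ (now $x_1\in L$ arbitrary, $x_3,\dots,x_\ell\in G$) enlarges the range of $x_2$ to all of $L$, and so on; note that the stability statement never restricts $\Gamma,\Delta$, so at each stage they may already involve arbitrary elements of $L$. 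After $\ell$ steps the entailment holds for every $x_1,\dots,x_\ell\in L$, which by Definition \ref{defDiTr} is precisely the assertion that $L$ has dimension $\leq\ell-1$. I expect the only genuine work to be the bookkeeping in the stability statement; the rest is the generation argument applied $\ell$ times.
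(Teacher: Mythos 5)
Your proof is correct and takes essentially the same approach as the paper: the paper's terse justification --- that distributivity lets one deduce $a\vu a',A\vda b\vu b',B$ and $a\vi a',A\vda b\vi b',B$ from $a,A\vda b,B$ and $a',A\vda b',B$, together with the observation that every element of $L$ is an inf of sups of elements of $G$ --- is precisely your sublattice-stability claim plus the coordinate-by-coordinate bootstrap, only written more briefly.
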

%--- end-lemma-----------------------------------------
Indeed using distributivity, one can deduce
$$ a\vu a',A\vda b\vu b',B\qquad\qquad
   a\wedge a',A\vda b\wedge b',B$$
from $a,A\vda b,B$ and $a',A\vda b',B$. Furthermore any element of $L$
is an inf of sups of elements of $G$.

\subsection{Implicative lattice}

 A lattice $L$ is said to be an {\em implicative lattice} \cite{Curry} or
{\em Heyting algebra} \cite{Johnstone} if, and only if, there is
a binary operation $\rightarrow$ such that
$$ a\wedge b\leq c \Longleftrightarrow a\leq b\rightarrow c$$

\begin{theorem}
\label{KrImpl}
If $L$ is an implcative lattice,
we have in $\Kr_{\ell}(L)$
$$ \varphi _0(U_0),\dots,\varphi_{\ell}(U_{\ell}) \vdash _S
   \varphi _0(J_0),\dots,\varphi_{\ell}(J_{\ell})$$
if, and only if,
$$
1 = u_{\ell}\rightarrow
   (j_{\ell} \vee (u_{\ell -1} \rightarrow (j_{\ell -1}\vee \dots (u_0\rightarrow j_0))))
$$
where $u_j = \wedge U_j$ and $j_k = \vee J_k.$
\end{theorem}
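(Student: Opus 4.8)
The plan is to reduce the entailment in $\Kr_{\ell}(L)$, via Theorem \ref{thKrJoy}, to the existence of auxiliary elements $x_1,\dots,x_\ell\in L$ solving the triangular system stated there, and then to exploit the Heyting adjunction to replace that existential statement by a single closed-form identity, namely its \emph{maximal} solution. Writing $u_i=\wedge U_i$ and $j_i=\vee J_i$, and recalling that an entailment $A\vdash B$ in $L$ means $\wedge A\leq \vee B$, the system of Theorem \ref{thKrJoy} reads
$$ x_1\wedge u_0\leq j_0,\qquad x_{i+1}\wedge u_i\leq j_i\vee x_i\ \ (1\leq i\leq \ell-1),\qquad u_\ell\leq j_\ell\vee x_\ell. $$
Applying the defining adjunction $a\wedge b\leq c\Longleftrightarrow a\leq b\rightarrow c$ to each line turns these into
$$ x_1\leq u_0\rightarrow j_0,\qquad x_{i+1}\leq u_i\rightarrow(j_i\vee x_i),\qquad 1 = u_\ell\rightarrow(j_\ell\vee x_\ell). $$

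Next I would introduce the maximal candidate, defined recursively by $t_0=u_0\rightarrow j_0$ and $t_i=u_i\rightarrow(j_i\vee t_{i-1})$ for $1\leq i\leq\ell-1$. The key observation is that $x\mapsto u_i\rightarrow(j_i\vee x)$ is order-preserving, since both $\vee$ and $b\rightarrow(-)$ are monotone. Hence, by a straightforward induction, every solution $(x_i)$ of the system satisfies $x_i\leq t_{i-1}$: the base case is $x_1\leq u_0\rightarrow j_0=t_0$, and if $x_i\leq t_{i-1}$ then $x_{i+1}\leq u_i\rightarrow(j_i\vee x_i)\leq u_i\rightarrow(j_i\vee t_{i-1})=t_i$. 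Feeding the bound $x_\ell\leq t_{\ell-1}$ into the last inequality $u_\ell\leq j_\ell\vee x_\ell$ and using monotonicity once more gives $u_\ell\leq j_\ell\vee t_{\ell-1}$, that is $1=u_\ell\rightarrow(j_\ell\vee t_{\ell-1})$; unfolding the definition of $t_{\ell-1}$ reproduces exactly the nested arrow identity in the statement.

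For the converse I would simply run the construction forward: assuming the displayed identity $1=u_\ell\rightarrow(j_\ell\vee t_{\ell-1})$, the choice $x_i:=t_{i-1}$ satisfies every line of the system by construction — the first $\ell$ inequalities hold with equality by definition of the $t_i$, and the last is precisely the assumed identity. By Theorem \ref{thKrJoy} this furnishes the entailment $\varphi_0(U_0),\dots,\varphi_\ell(U_\ell)\vdash_S\varphi_0(J_0),\dots,\varphi_\ell(J_\ell)$, closing the equivalence. The empty cases are absorbed automatically by the conventions $\wedge\emptyset=1$ and $\vee\emptyset=0$.

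The only genuine content is the passage to the maximal solution, which is what lets one collapse the $\ell$ existential quantifiers into a single equation; this rests entirely on the adjunction providing a largest $x$ with $x\wedge b\leq c$, together with the monotone propagation of the bounds $x_i\leq t_{i-1}$ through both the join and the implication. I expect the main obstacle — such as it is — to be purely clerical: keeping the indices of the triangular system aligned with the right-associated nesting of the arrows, so that the recursion for $t_i$ and the final instance $u_\ell\rightarrow(j_\ell\vee t_{\ell-1})$ match the displayed formula term by term. Everything else is a direct translation through the Heyting structure.
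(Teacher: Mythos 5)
Your proof is correct. The paper states Theorem~\ref{KrImpl} without giving a proof, but the intended argument is precisely the one you supply: reduce via Theorem~\ref{thKrJoy} to the triangular system in $L$, and collapse the $\ell$ existential quantifiers by passing to the maximal solution $t_i$ furnished by the Heyting adjunction.
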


 In the case where $L$ is an implicative lattice, we can write
explicitely that $L$ is of dimension $\leq {\ell}-1$ as an
identity. For instance that $L$ is of dimension $\leq 0$
is equivalent to the identity
$$ 1 = x \vee \neg x$$
where $\neg x = x\rightarrow 0$ and that $L$ is of dimension
$\leq 1$ is equivalent to the identity
$$ 1 = x_2 \vee (x_2 \rightarrow (x_1\vee \neg x_1))$$
and so on.

\begin{corollary}
\label{dimImpl}
An implicative lattice $L$ is of dimension $\leq {\ell}-1$ if, and only if,
for any sequence $x_1,\dots,x_{\ell}$
$$ 1 = x_{\ell}\vee (x_{\ell}\rightarrow \dots (x_2 \vee (x_2 \rightarrow (x_1\vee \neg x_1)))\dots)
$$
\end{corollary}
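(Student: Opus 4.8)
The plan is to obtain the corollary as the special case of Theorem~\ref{KrImpl} corresponding to elementary idealistic chains. By Definition~\ref{defDiTr}, the lattice $L$ has dimension $\leq\ell-1$ iff every elementary idealistic chain of length $\ell$ collapses; that is, iff for all $x_1,\ldots,x_\ell\in L$ the chain
$$((0,x_1),(x_1,x_2),\ldots,(x_{\ell-1},x_\ell),(x_\ell,1))$$
collapses. By the definition of collapse this is exactly the entailment $\varphi_0(U_0),\ldots,\varphi_\ell(U_\ell)\vdash_S\varphi_0(J_0),\ldots,\varphi_\ell(J_\ell)$ in $\Kr_\ell(L)$, where the idealistic primes $(J_i,U_i)$ of this chain are $J_0=\{0\}$, $U_\ell=\{1\}$, and $J_i=\{x_i\}$, $U_i=\{x_{i+1}\}$ in between.

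First I would record, for this particular chain, the quantities $u_j=\Vi U_j$ and $j_k=\Vu J_k$ appearing in Theorem~\ref{KrImpl}: one reads off $u_\ell=1$, $j_0=0$, and $u_i=x_{i+1}$, $j_i=x_i$ for the intermediate indices, so that $u_{k-1}=x_k=j_k$ at every interior level. Then I would substitute these into the nested implicative expression furnished by Theorem~\ref{KrImpl},
$$1 = u_{\ell}\rightarrow (j_{\ell} \vee (u_{\ell -1} \rightarrow (j_{\ell -1}\vee \dots (u_0\rightarrow j_0))))$$
and simplify its two boundary factors. The outermost one drops out because $1\rightarrow c=c$ holds in any implicative lattice (immediate from $a\wedge 1\leq c\Leftrightarrow a\leq 1\rightarrow c$), and the innermost factor is $u_0\rightarrow j_0=x_1\rightarrow 0$, which is $\neg x_1$ by the definition of $\neg$. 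Since each interior level becomes $x_k\vee(x_k\rightarrow\cdot)$, the identity collapses to exactly
$$1 = x_{\ell}\vee (x_{\ell}\rightarrow \dots (x_2 \vee (x_2 \rightarrow (x_1\vee \neg x_1)))\dots),$$
and because Theorem~\ref{KrImpl} is an equivalence, the chain collapses iff this identity holds, which is the claim.

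I expect no genuine mathematical obstacle here: all the content is already in Theorem~\ref{KrImpl} together with Definition~\ref{defDiTr}, and the corollary is simply their transcription for elementary chains in the implicative setting. The only point requiring care is the index bookkeeping, namely checking that the alternation $j_k\vee(u_{k-1}\rightarrow\cdot)$ of Theorem~\ref{KrImpl} turns into the alternation $x_k\vee(x_k\rightarrow\cdot)$ of the statement under the substitution $u_{k-1}=x_k=j_k$, and that the two ends shift correctly (the $u_\ell=1$ at the top and the $j_0=0$ feeding $x_1\vee\neg x_1$ at the bottom).
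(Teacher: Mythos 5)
Your proposal is correct and follows exactly the route the paper intends: the corollary is the specialisation of Theorem~\ref{KrImpl} to the elementary idealistic chain $((0,x_1),(x_1,x_2),\ldots,(x_\ell,1))$, with the simplifications $1\rightarrow c=c$ and $x_1\rightarrow 0=\neg x_1$; the index bookkeeping ($u_{k-1}=x_k=j_k$, $u_\ell=1$, $j_0=0$) checks out. The paper leaves this proof implicit, and your write-up supplies precisely the computation it omits.
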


%--- SUBSUBsection  Joyal 2 -------------------
\subsection{Decidability}
\label{decidability}
%-----------------------------------------

 To any distributive lattice $L$ we have associated a family
of distributive lattices $\Kr_{\ell}(L)$ with a complete description
of their ordering. A lattice is {\em discrete} if, and only if,
its ordering is decidable, which
means intuitively that there is an algorithm to decide the ordering
(or, equivalently, the equality) in this lattice. It should be intuitively clear that
we could find a discrete lattice $L$ such that $\Kr_1(L)$ is not
discrete since, by \ref{thKrJoy},
the ordering on $\Kr_1(L)$ involves an existential quantification
on the set $L$, that  may be infinite (this point is discussed in
\cite{BoileauJoyal}, with another argument).
However we can use the characterisation of
Theorem \ref{thKrJoy} to give a general sufficient condition ensuring
that all $\Kr_{\ell}(L)$ are discrete.

\begin{theorem}
\label{decKr}
Suppose that the lattice $L$ is a discrete implicative lattice
then each $\Kr_{\ell}(L)$ is discrete.
\end{theorem}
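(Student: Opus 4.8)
The plan is to show that the order relation of $\Kr_{\ell}(L)$ is decidable, since discreteness of a lattice is exactly decidability of its ordering (equality being the conjunction of the two comparisons $X\leq Y$ and $Y\leq X$). The whole strategy is to reduce an arbitrary comparison in $\Kr_{\ell}(L)$ to finitely many \entrels between generators, and then to evaluate each such \entrel by the explicit closed formula of Theorem \ref{KrImpl}, whose value is an element of $L$ that discreteness of $L$ lets us test against $1$.

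First I would recall, as explained in Subsection \ref{subsecTrd2}, that since $S=\bigcup_i\varphi_i(L)$ generates $\Kr_{\ell}(L)$, any two elements can be compared by writing the first in disjunctive normal form $\bigvee_i\bigwedge A_i$ and the second in conjunctive normal form $\bigwedge_j\bigvee B_j$, with $A_i,B_j\in\Pf(S)$, so that the comparison amounts to the finite conjunction of the atomic relations $A_i\vdash_S B_j$. These normal forms are obtained by purely syntactic distributive-lattice manipulations, hence effectively. It therefore suffices to decide, for arbitrary finite subsets $A,B$ of $S$, whether $A\vdash_S B$.

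Next, since by construction $S$ is the disjoint union of the copies $\varphi_i(L)$, with each $\varphi_i$ a bijection onto the $i$th copy, a finite subset $A\subseteq S$ decomposes uniquely as $\varphi_0(U_0),\dots,\varphi_\ell(U_\ell)$ with $U_i=\{x\in L\;;\;\varphi_i(x)\in A\}\in\Pf(L)$, and likewise $B$ yields $J_0,\dots,J_\ell$. The atomic relation to decide is thus exactly
$$\varphi_0(U_0),\dots,\varphi_\ell(U_\ell)\vdash_S\varphi_0(J_0),\dots,\varphi_\ell(J_\ell),$$
and Theorem \ref{KrImpl}, which applies because $L$ is implicative, tells us that this holds \ssi
$$1=u_\ell\rightarrow\bigl(j_\ell\vee(u_{\ell-1}\rightarrow(j_{\ell-1}\vee\dots(u_0\rightarrow j_0)))\bigr)\quad{\rm in}\ L,$$
where $u_j=\wedge U_j$ and $j_k=\vee J_k$ (using $\wedge\emptyset=1$ and $\vee\emptyset=0$).

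Finally I would conclude. The right-hand side above is a single element of $L$ built from the finite data $U_j,J_k$ by finitely many applications of $\wedge$, $\vee$ and $\rightarrow$; since $L$ is an implicative lattice all three are available operations, so this element is effectively computable. Because $L$ is discrete, equality with $1$ in $L$ is decidable, hence the atomic relation $A\vdash_S B$ is decidable, and therefore so is every comparison in $\Kr_{\ell}(L)$. I do not expect a serious obstacle here: the argument is essentially an assembly of the preceding results, and the only point needing care is the reduction to atomic \entrels via normal forms, which is the standard mechanism underlying Theorem \ref{thEntRel1}. The decisive ingredient is Theorem \ref{KrImpl}, which collapses an \entrel \emph{a priori} involving an existential quantifier over $L$ (cf. the discussion at the start of Subsection \ref{decidability}) into a single quantifier-free identity that discreteness of $L$ makes testable.
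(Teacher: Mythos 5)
Your proof is correct and follows exactly the route the paper intends: the paper's own proof consists of the single sentence ``This is direct from Theorem \ref{KrImpl}'', and your write-up simply makes explicit the routine reduction (normal forms over the generators $S$, decomposition of an atomic entailment into the $U_i$, $J_i$, then the closed formula of Theorem \ref{KrImpl} tested against $1$ using discreteness of $L$) that the paper leaves implicit. Nothing to correct.
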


\begin{proof}{Proof}
This is direct from Theorem \ref{KrImpl}.
%We have for instance in
%$\Kr_{2}(L)$
%$$ \varphi _0(U_0),\varphi _1(U_1),\varphi_2(U_2) \vdash _S
%   \varphi _0(J_0),\varphi_1(J_1),\varphi _2(J_2)$$
%if, and only if, writing $j_k = \vee J_k$ and $u_k = \wedge U_k$
%$$ u_2 \leq j_2 \vee (u_1 \rightarrow (j_1\vee (u_0\rightarrow j_0)))$$
%which shows that $\vdash _S$ is discrete if $L$ is discrete.
\end{proof}

\subsection{Dimension of Spectral Spaces}

This subsection is written from a classical point of view.
Following \cite{Hochster}, a topological space $X$ is called a spectral space if it satisfies the
following conditions:
(a) $X$ is a compact $T\sb 0$-space; (b) $X$ has a compact open basis which is closed under
finite intersections;
(c) each irreducible closed subspace of $X$ has a generic point. $\Spec(R)$, with the Zariski
topology, is spectral for any commutative ring $R$ with identity. Similary, if we take
for basic open the sets $U_a = \{\phi\in\Spec(L)~|~\phi(a) = 1\}$
then $\Spec(L)$ is spectral for any distributive lattice. The compact
open subsets of a spectral space form a distributive lattice, and
it is well-known \cite{Stone,Johnstone}
that, if $L$ is an arbitrary distributive lattice, then $L$ is isomorphic to the lattice of
compact open subsets of the space $\Spec(L)$.

If $U,V$ are open subsets of a topological space $X$ we define
$U\rightarrow V$ to be the largest open $W$ such that
$W\cap U\subseteq V$
and $\neg~U = U\rightarrow \emptyset$. In a classical
setting a spectral space
$X$ is said to be of dimension $\leq {\ell}-1$ if, and only if, there is no strictly
increasing chains of length $\ell$ of irreducible closed subsets of $X$.
We can reformulate Theorem \ref{dimLatt} as follows.

\begin{theorem}
\label{dimSpec}
A spectral space $X$ is of dimension $\leq {\ell}-1$ if, and only if,
for any compact open subsets $x_1,\dots,x_{\ell}$ of $X$
$$ X = x_{\ell}\vee (x_{\ell}\rightarrow \dots (x_2 \vee (x_2 \rightarrow (x_1\vee \neg x_1)))\dots)
$$
\end{theorem}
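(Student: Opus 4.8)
The plan is to reduce everything to Theorem~\ref{dimLatt} applied to the \trdi $L=K(X)$ of compact open subsets of $X$. Recall that $X$ is homeomorphic to $\Spec(L)$ and that the irreducible closed subsets of $\Spec(L)$ are exactly the closures of points (condition (c)); hence strictly increasing chains of irreducible closed subsets of $X$ correspond to chains of prime ideals of $L$, so that ``$X$ is of dimension $\le\ell-1$'' means precisely that $L$ has Krull dimension $\le\ell-1$ (the characterisation established just before Theorem~\ref{dimLatt}). By Theorem~\ref{dimLatt} the latter amounts to saying that for all compact opens $x_1,\dots,x_\ell$ there exist compact opens $a_1,\dots,a_\ell\in L$ with
\[a_1\wedge x_1=\emptyset,\qquad a_{k+1}\wedge x_{k+1}\le a_k\vee x_k\ (1\le k<\ell),\qquad a_\ell\vee x_\ell=X.\]
So the real content is to show that the existence of such a chain $(a_k)$ inside $L$ is equivalent to the single displayed identity. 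The point to bear in mind throughout is that $\rightarrow$ and $\neg$ are computed in the frame ${\cal O}(X)$ of \emph{all} open sets, which is a Heyting algebra, whereas $L$ itself need not be implicative; this is exactly why Corollary~\ref{dimImpl} cannot be invoked directly, and why the two directions have a different flavour.

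For the easy direction I would abbreviate the right-hand side of the identity by setting $b_1=x_1\vee\neg x_1$ and $b_k=x_k\vee(x_k\rightarrow b_{k-1})$, so that the claim becomes $b_\ell=X$. Given a chain $(a_k)$ in $L$, I would prove $a_k\vee x_k\le b_k$ by induction on $k$: the case $k=1$ follows from $a_1\wedge x_1=\emptyset$, \cad $a_1\le\neg x_1$; and if $a_k\vee x_k\le b_k$ then $a_{k+1}\wedge x_{k+1}\le a_k\vee x_k\le b_k$ gives $a_{k+1}\le x_{k+1}\rightarrow b_k$ in the frame, whence $a_{k+1}\vee x_{k+1}\le b_{k+1}$. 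Since $b_\ell\le X$ trivially, the top relation $a_\ell\vee x_\ell=X$ then forces $b_\ell=X$.

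The hard direction, and the main obstacle, is to go from the frame identity $b_\ell=X$ back to a chain living in $L$. Here I would first record the ``ideal'' frame witnesses $\tilde a_1=\neg x_1$ and $\tilde a_k=x_k\rightarrow b_{k-1}$, which satisfy $b_k=x_k\vee\tilde a_k$, $\tilde a_1\wedge x_1=\emptyset$ and $\tilde a_k\wedge x_k\le b_{k-1}$ by the Heyting law $(u\rightarrow v)\wedge u\le v$; these are open but in general not compact. I would then replace them by compact opens by a descending induction exploiting compactness. At the top, $X=b_\ell=\tilde a_\ell\vee x_\ell$ together with compactness of $X$, the fact that $x_\ell\in L$, and the fact that compact opens form a basis, yields finitely many compact opens inside $\tilde a_\ell$ whose join $a_\ell\in L$ satisfies $a_\ell\le\tilde a_\ell$ and $a_\ell\vee x_\ell=X$. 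At a general step, given $a_{k+1}\in L$ with $a_{k+1}\le\tilde a_{k+1}$, the element $a_{k+1}\wedge x_{k+1}$ is a compact open (finite meets of compact opens being compact open), and $a_{k+1}\wedge x_{k+1}\le\tilde a_{k+1}\wedge x_{k+1}\le b_k=\tilde a_k\vee x_k$; covering this compact set by $x_k$ and finitely many compact opens contained in $\tilde a_k$ produces a compact open $a_k\le\tilde a_k$ with $a_{k+1}\wedge x_{k+1}\le a_k\vee x_k$. At the bottom, $a_1\le\tilde a_1=\neg x_1$ gives $a_1\wedge x_1=\emptyset$. This delivers the required chain $(a_k)$ in $L$, completing the equivalence. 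The whole difficulty is thus concentrated in this passage from frame-level implications to compact open witnesses, for which compactness of $X$ and closure of $L$ under finite meets and joins are precisely what is needed.
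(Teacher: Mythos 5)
Your proof is correct and follows the route the paper intends: the paper offers Theorem~\ref{dimSpec} only as a ``reformulation'' of Theorem~\ref{dimLatt} via the lattice $L$ of compact opens, and your argument is exactly that reduction, with the one genuinely non-trivial point --- passing between the chain of witnesses $a_k\in L$ and the single identity whose $\rightarrow$ and $\neg$ live in the frame of all opens --- handled correctly by taking the canonical Heyting witnesses $\tilde a_k$ in one direction and using compactness of $X$ and the basis of compact opens to shrink them back into $L$ in the other. Nothing to add.
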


%--- SUBSUBsection  Joyal 2 -------------------
\subsection{Connections with Joyal's definition}
\label{subsubsecJoyal2}
%-----------------------------------------

  Let $L$ be a distributive lattice, Joyal \cite{esp} gives the following
definition of $\dim(L)\leq \ell$. Let
$\varphi^\ell_i:L\rightarrow \Kr_\ell(L)$ be the $\ell+1$ universal
morphisms.
By universality of $\Kr_{\ell+1}(L)$, we have
$\ell+1$ morphisms $\sigma_i:\Kr_{\ell+1}(L)\rightarrow
\Kr_\ell(L)$
such that $\sigma_i\circ \varphi^{\ell+1}_j =  \varphi^\ell_j$ if
$j\leq i$
and $\sigma_i\circ \varphi^{\ell+1}_j =  \varphi^\ell_{j-1}$ if $j>i$.
Joyal defines then $\dim(L)\leq \ell$ to mean that
$(\sigma_0,\dots,\sigma_\ell):\Kr_{\ell+1}(L)\rightarrow
\Kr_\ell(L)^{\ell+1}$
is injective. This definition can be motivated by Proposition
\ref{propRep2}: the elements in the image of
$Sp(\sigma_i)$ are the chains of prime ideals
$(\alpha_0,\dots,\alpha_{\ell})$
with $\alpha_i= \alpha_{i+1}$, and $Sp(\sigma_0,\dots,\sigma_\ell)$
is surjective \ssi for any chain $(\alpha_0,\dots,\alpha_{\ell})$
there exists $i<\ell$ such that $\alpha_i= \alpha_{i+1}$. This means
exactly
that there is no nontrivial chain of prime ideals of length $\ell+1$.
Using the completeness theorem, one can then see the equivalence
with Definition \ref{defDiTr}.
One could check directly this equivalence using a constructive
metalanguage, but
for lack of space, we shall not present here this argument.
Similarly, it would be possible to establish the equivalence of our
definition with the one of Espa\~nol \cite{esp} (here also, this
connection is clear via the completeness theorem).

%--- SECTION{Treillis de Zariski and de Krull dans } -
\section{Zariski and Krull lattices of a commutative ring}
\label{secZariKrull}
%-------------------------------------------------------------
%--- SUBSUBsection Zar ------------------------
\subsection{Zariski lattice}
\label{subsubsecZar}
%-----------------------------------------
Let $R$ be a commutative ring. We write
$\gen{J}$  or explicitly
$\gen{J}_R$ for the ideal of $R$ generated by the subset $J\subseteq R$.
We write
$\cM(U)$ for the monoid\footnote{A monoid will always be
multiplicative.}
generated by the subset $U\subseteq R$.
Given a commutative ring $R$ the {\em Zariski lattice}  $\Zar(R)$ has
for elements the radicals of finitely generated
ideals (the order relation being inclusion).
It is well defined as a lattice.
Indeed $\sqrt{I_1}= \sqrt{J_1}$ and $\sqrt{I_2}= \sqrt{J_2}$
imply
$\sqrt{I_1I_2}=  \sqrt{J_1J_2} $
(which defines $\sqrt{I_1}\vi\sqrt{I_2}$) and
$\sqrt{I_1+I_2}= \sqrt{J_1+J_2}$
(which defines $\sqrt{I_1}\vu\sqrt{I_2}$). The Zariski lattice of $R$ is
always distributive, but may not be discrete, even if $R$ is discrete.
Nevertheless an inclusion  $\sqrt{I_1}\subseteq \sqrt{I_2}$ can always
be certified in a finite way if the ring $R$ is discrete.
This lattice contains all the informations necessary for a constructive
development of the abstract theory of the Zariski spectrum.\\
We shall write $\wi{a}$ for $\sqrt{\gen{a}}$. Given a subset $S$ of $R$
we write $\wi{S}$ for the subset of $\Zar(R)$ the  elements of which are
$\wi{s}$ for
$s\in S$. We have
$\wi{a_1}\vu\cdots\vu\wi{a_m}= \sqrt{\gen{a_1,\ldots,a_m}}$ and
$\wi{a_1}\vi\cdots\vi\wi{a_m}= \wi{a_1\cdots a_m}$. \\
Let $U$ and  $J$ be two finite subsets of $R$, we have
%-----------------begin $$----------------
$$ \wi{U}\,\vdash_{\Zar(R)} \wi{J}
\quad\Longleftrightarrow \quad
\prod_{u\in U} u  \in \sqrt{\gen{J}}
\quad\Longleftrightarrow \quad
\cM(U)\cap \gen{J}\neq \emptyset
$$
%-----------------end $$------------------
%-----------------end $$------------------
This describes completely the lattice $\Zar(R)$. More precisely we have:
%--- Proposition{propZar}--------------
\begin{proposition}
\label{propZar} The lattice $\Zar(R)$ of a commutative ring $R$ is
(up to isomorphism) the lattice generated by  $(R,\vda)$  where $\vda$
is the least entailment relation over $R$ such that
%--------------------begin array---------------
$$\begin{array}{rclcrclcrcl}
   0   & \vda  &      &\qquad  & x,\; y & \vda  &  x y   \\
         & \vda  &  1 &\qquad &  xy    & \vda  &   x   &\qquad &
   x+y   & \vda  & x,\; y  \\
\end{array}$$
%---------------------end array--------------
\end{proposition}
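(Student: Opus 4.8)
The plan is to identify the least entailment relation generated by the five displayed relations with the explicit relation $\vdash_{0}$ defined on $\Pf(R)$ by
\[
U \vdash_{0} J \quad :\Longleftrightarrow\quad \prod_{u\in U} u \in \sqrt{\gen{J}},
\]
and then to invoke the fundamental theorem of \entrels (Theorem \ref{thEntRel1}) together with the fact that $\Zar(R)$ is generated as a lattice by the elements $\sqrt{\gen{a}}$, $a\in R$. Once we know that the least entailment relation containing the five relations is exactly $\vdash_{0}$, Theorem \ref{thEntRel1} says the lattice presented by the generators $R$ and these relations has $\vdash_{L} = \vdash_{0}$; comparing with the displayed equivalence preceding the proposition, $\prod_{u\in U} u \in \sqrt{\gen{J}}$, and using that $\Zar(R)$ is generated by the $\sqrt{\gen{a}}$, the induced lattice morphism $a\mapsto \sqrt{\gen{a}}$ is the desired isomorphism.

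First I would check that $\vdash_{0}$ is itself an entailment relation satisfying the five relations; by minimality this already gives that the least such relation is contained in $\vdash_{0}$. Reflexivity is immediate, and monotonicity follows because $\sqrt{\gen{J}}$ is an ideal (hence closed under multiples) and grows with $J$. The one substantial axiom is the cut rule: writing $p:=\prod_{u\in U}u$, suppose $px\in\sqrt{\gen{J}}$ and $p\in\sqrt{\gen{J \cup \{x\}}}$, so that $p^{n} = j + rx$ with $j\in\gen{J}$; then $p^{n+1}=pj+r(px)\in\sqrt{\gen{J}}$, whence $p\in\sqrt{\gen{J}}$, \cad $U\vdash_{0} J$. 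The five relations are then routine: the empty-left and empty-right relations hold since $0$ is nilpotent and $1\in R$; $x,y\vdash_{0} xy$ and $xy\vdash_{0} x$ hold since $xy\in\sqrt{\gen{xy}}$ and $xy\in\gen{x}$; and $x+y\vdash_{0} x,y$ holds since $x+y\in\gen{x,y}$.

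The heart of the argument is the reverse inclusion: every instance of $\vdash_{0}$ must be \emph{derivable} from the five relations, so that $\vdash_{0}$ is contained in (hence equal to) the least such relation. Here I would isolate three derivation steps, each proved by induction using monotonicity and repeated cuts. First, a product lemma $u_{1},\dots,u_{k} \vdash u_{1}\cdots u_{k}$, obtained by iterating $x,y\vdash xy$ and cutting on the partial products; taking all $u_{i}$ equal gives in particular $p\vdash p^{n}$ (since $\{p,\dots,p\}=\{p\}$ as a set). Second, a sum lemma $\sum_{i} a_{i} \vdash a_{1},\dots,a_{m}$, obtained by iterating $x+y\vdash x,y$. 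Third, the absorption relation $xy\vdash x$. Combining these: given $U\vdash_{0} J$ with $p=\prod_{u\in U} u$ and $p^{n}=\sum_{j\in J} r_{j} j\in\gen{J}$, the sum lemma gives $p^{n}\vdash \{r_{j} j\}_{j\in J}$, and cutting each $r_{j} j$ against $r_{j} j\vdash j$ replaces it by $j$, yielding $p^{n}\vdash J$; chaining $U\vdash p$, $p\vdash p^{n}$ and $p^{n}\vdash J$ by two further cuts gives $U\vdash J$.

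I expect the bookkeeping of this final chaining to be the main obstacle, in particular the repeated-cut step that turns $p^{n}\vdash \{r_{j} j\}_{j\in J}$ into $p^{n}\vdash J$ by absorbing the coefficients $r_{j}$ one at a time: each cut must be set up by first using monotonicity to put both the ``add the cut element on the right'' sequent and the ``keep it on the left'' sequent over a common context. The computation $p^{n+1}=pj+r(px)$ in the cut-rule verification for $\vdash_{0}$ is the other delicate point, being the place where the passage to radicals is genuinely used and must be stated with care.
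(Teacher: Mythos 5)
Your proposal is correct and follows essentially the same route as the paper's proof: identify the least entailment relation with the concrete relation $\prod_{u\in U}u\in\sqrt{\gen{J}}$ (equivalently, $\cM(U)\cap\gen{J}\neq\emptyset$), verify the cut rule by the same elimination computation, and conclude via Theorem \ref{thEntRel1}. The only difference is that you spell out the derivability direction (product, sum and absorption lemmas chained by cuts), which the paper dismisses as ``clear''.
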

%--- end-proposition----------------------------------------
%-----------------begin proof------------------
\begin{proof}{Proof}
It is clear that the relation $U\vda J$ defined by ``$\cM(U)$
meets $\gen{J}$'' satisfies these axioms. It is also clear that
the entailment relation generated by these axioms contains this
relation. Let us show that this relation is an  \entrelz.
Only the cut rule is not obvious.
Assume that $\cM(U,a)$ meets
$\gen{J}$ and that $\cM(U)$ meets $\gen{J,a}$. There exist then
$m_1,m_2\in \cM(U)$ and $k\in \mathbb{N},x\in R$ such that
$a^k m_1 \in \gen{J},~m_2+ax\in \gen{J}$.
Eliminating $a$ this implies that $\cM(U)$ intersects $\gen{J}.$
\end{proof}
%-----------------end proof------------------
We have $\wi{a}= \wi{b}$ \ssi $a$ divides a power of $b$ and
$b$ divides a power of $a$.
%--- Proposition{propZar2}-------------
\begin{proposition}
\label{propZar2}
In a commutative ring $R$ to give an ideal of the lattice
$\Zar(R)$ is the same as to give a radical ideal of $R$.
If $I$ is a radical ideal of $R$ one associates the ideal
$${\cI} =  \{ J \in \Zar(R)~|~ J \subseteq I\}$$
of $\Zar(R)$. Conversely if $\cal{I}$ is an ideal of
$\Zar(R)$ one can associate the ideal
$$I= \bigcup _{J\in\cal{I}}J= \{ x\in R~|~\wi{x}\in \cal{I}\},$$
which is a radical ideal of $R.$
In this bijection the prime ideals of the ring correspond to
the prime ideals of the Zariski lattice.
\end{proposition}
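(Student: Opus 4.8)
The plan is to exhibit the two assignments $I\mapsto\cI$ and $\cI\mapsto I$ explicitly, to check that each lands in the correct class of objects, to verify they are mutually inverse, and finally to treat the statement about primes. Throughout I use the structural description of $\Zar(R)$ recorded just before the proposition: every element of $\Zar(R)$ has the shape $\widetilde{a_1}\vu\cdots\vu\widetilde{a_m}=\sqrt{\gen{a_1,\ldots,a_m}}$, the order is inclusion of radical ideals, $\widetilde{a}\vi\widetilde{b}=\widetilde{ab}$, the join is $\widetilde{a}\vu\widetilde{b}=\sqrt{\gen{a,b}}$, and the neutral elements are the bottom $0_{\Zar(R)}=\widetilde{0}=\sqrt{\gen{0}}$ (the nilradical) and the top $1_{\Zar(R)}=\widetilde{1}=R$.

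First I would check well-definedness in both directions. If $I$ is a radical ideal of $R$ and $\cI=\{J\in\Zar(R)\;;\;J\subseteq I\}$, then $0_{\Zar(R)}\subseteq I$ since $I$ is radical, $\cI$ is downward closed by transitivity of $\subseteq$, and it is closed under $\vu$ because $J_1,J_2\subseteq I$ forces $\sqrt{J_1+J_2}\subseteq\sqrt{I}=I$. Conversely, given a lattice ideal $\cI$, set $I=\{x\in R\;;\;\widetilde{x}\in\cI\}$. To see that this equals $\bigcup_{J\in\cI}J$ I use the reduction observation below: if $x\in J\in\cI$ then $\widetilde{x}=\sqrt{\gen{x}}\subseteq J$, so $\widetilde{x}\in\cI$ by downward closure, while the reverse inclusion is immediate since $x\in\widetilde{x}$. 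That $I$ is an ideal follows from $\widetilde{x+y}\leq\widetilde{x}\vu\widetilde{y}$ and $\widetilde{rx}\leq\widetilde{x}$ together with the ideal axioms for $\cI$, and $I$ is radical because $\widetilde{x^n}=\widetilde{x}$.

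The technical heart is a reduction lemma: for any lattice ideal $\cI$, an element $J=\widetilde{a_1}\vu\cdots\vu\widetilde{a_m}$ lies in $\cI$ if, and only if, each $\widetilde{a_i}$ lies in $\cI$; indeed $\widetilde{a_i}\leq J$ gives one direction by downward closure and closure under $\vu$ gives the other. This lemma makes both round-trips transparent. Starting from a radical ideal $I$, the set $\bigcup_{J\in\cI}J$ recovers $I$ since $x\in I\Leftrightarrow\widetilde{x}\subseteq I\Leftrightarrow\widetilde{x}\in\cI$. Starting from a lattice ideal $\cI$, the set $\{J\;;\;J\subseteq I\}$ recovers $\cI$ because $J\subseteq I$ means every generator $a_i\in I$, i.e. every $\widetilde{a_i}\in\cI$, which by the lemma is equivalent to $J\in\cI$.

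Finally, for the correspondence of primes I would match the two notions of ``prime'' recalled earlier: a prime ideal of the lattice is an ideal $\cI$ with $1\notin\cI$ and $J_1\vi J_2\in\cI\Rightarrow J_1\in\cI$ or $J_2\in\cI$, while a prime ideal of $R$ is a proper radical ideal with $xy\in I\Rightarrow x\in I$ or $y\in I$ (note that ring primes are automatically radical, hence in the domain of the bijection). Properness matches since $1\in\cI\Leftrightarrow R\subseteq I\Leftrightarrow 1\in I$. For the multiplicative condition, the single-generator case is exactly the ring condition because $\widetilde{x}\vi\widetilde{y}=\widetilde{xy}$, so $\widetilde{xy}\in\cI\Leftrightarrow xy\in I$. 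I expect the one step needing genuine care to be upgrading this single-generator statement to arbitrary $J_1,J_2$: writing $J_1=\bigvee_i\widetilde{a_i}$ and $J_2=\bigvee_j\widetilde{b_j}$ and using distributivity with $\widetilde{a_i}\vi\widetilde{b_j}=\widetilde{a_ib_j}$, one gets $J_1\vi J_2=\bigvee_{i,j}\widetilde{a_ib_j}$, so by the reduction lemma $J_1\vi J_2\in\cI$ iff all $a_ib_j\in I$; a contrapositive argument (choose $a_i\notin I$ and $b_j\notin I$ witnessing $J_1,J_2\notin\cI$, then primality of $I$ yields $a_ib_j\notin I$) closes the equivalence. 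Everything else is routine bookkeeping with the ideal axioms.
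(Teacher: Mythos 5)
Your treatment of the bijection itself (well-definedness in both directions, the reduction lemma for $J=\widetilde{a_1}\vu\cdots\vu\widetilde{a_m}$, the two round trips) is correct and in fact more detailed than the paper, which declares that part routine and ``only prove[s] the last assertion''. But the one place where the paper does real work is exactly the step you flagged as needing care, and there your argument has a genuine gap in the setting of this paper: you close the implication ``$J_1\vi J_2\in\cI\Rightarrow J_1\in\cI$ or $J_2\in\cI$'' by contraposition (choose $a_i\notin I$ and $b_j\notin I$, deduce $a_ib_j\notin I$). The paper is written constructively throughout, and its definition $(*)$ of a prime ideal of a lattice explicitly requires the ``or'' to be read constructively; a contrapositive only yields the negation of ``$J_1\notin\cI$ and $J_2\notin\cI$'', not a decided disjunction, so it does not establish that $\cI$ is prime in the intended sense. (It also silently passes from ``not all $a_i\in I$'' to ``some $a_i\notin I$'', a second classical step.)

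The paper's argument runs forward instead: $J_1\vi J_2=\sqrt{\gen{a_ib_j}}$, so $J_1\vi J_2\in\cI$ gives $a_ib_j\in I$ for all $i,j$, hence for each pair the decided disjunction $a_i\in I$ or $b_j\in I$; from these finitely many decided disjunctions one concludes \emph{constructively} that $a_i\in I$ for all $i$ or $b_j\in I$ for all $j$ --- for instance by the intuitionistically valid distribution of $\vee$ over finite conjunctions, $(P\vee A)\wedge(P\vee B)\Rightarrow P\vee(A\wedge B)$, applied once over $j$ and once over $i$ --- and then your reduction lemma gives $J_1\in\cI$ or $J_2\in\cI$. The converse direction (a prime ideal of $\Zar(R)$ yields a prime ideal of $R$ via $\widetilde{x}\vi\widetilde{y}=\widetilde{xy}$) is exactly the paper's and is fine. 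If you replace the contrapositive by this finite combinatorial step, your proof is complete and agrees with the paper's.
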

%--- end-proposition----------------------------------------
%-----------------begin proof------------------
\begin{proof}{Proof}
We only prove the last assertion.
If $I$ is a prime ideal of $R$, if $J,J'\in \Zar(R)$ and
$J\vi J'\in \cal{I}$, let $a_1,\dots,a_n\in R$ be some ``generators'' of
$J$ (\cad $J= \sqrt{\gen{a_1,\dots,a_n}}$) and let $b_1,\dots,b_m\in R$
be some
generators of $J'.$ We have
$a_ib_j\in I$ and hence $a_i\in I$ or $b_j\in I$ for all $i,j.$ It
follows
from this (constructively) that we have $a_i\in I$ for all $i$ or
$b_j\in I$ for all $j$. Hence $J\in \cal{I}$ or $J'\in \cal{I}$ and
$\cal{I}$ is a prime ideal of $\Zar(R).$
\\
Conversely if $\cal{I}$ is a prime ideal of $\Zar(R)$
and if we have $\wi{xy}\in \cal{I}$ then
$\wi{x}\vi \wi{y}\in \cal{I}$ and hence $\wi{x}\in \cal{I}$
or $\wi{y}\in \cal{I}$. This shows that $\{ x\in R~|~\wi{x}\in
\cal{I}\}$
is a prime ideal of $R$.
\end{proof}
%-----------------end proof------------------
%--- Definition{defKruA}---------------
\subsection{Krull lattices of a commutative ring}

\begin{definition}
\label{defKruA}
We define $\Kru_\ell(R):= \Kr_\ell(\Zar(R))$. This is called the
{\em Krull lattice of order $\ell$} of the ring $R$. We say also
that $R$ is of Krull dimension $\leq {\ell}$  iff the distributive
lattice $\Zar(R)$ is of dimension  $\leq {\ell}$.
\end{definition}
%--- end-definition--------------------------
\begin{theorem}
\label{dimRing}
The ring $R$ is of dimension $\leq {\ell}-1$ \ssi
for any $x_1,\dots,x_n\in R$ we have  in
$\Kru _\ell (R)$
$$ \varphi_0(\wi{x_1}),\dots,\varphi_{\ell-1}(\wi{x_{\ell}})
\vda
\varphi_1(\wi{x_1}),\dots,\varphi_{\ell}(\wi{x_{\ell}})
$$
\end{theorem}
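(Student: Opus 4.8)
The plan is to read off the statement from Lemma \ref{lemDimGen}, applied to the lattice $L = \Zar(R)$. By Definition \ref{defKruA}, saying that $R$ is of dimension $\leq \ell-1$ means, by definition, that the \trdi $\Zar(R)$ is of dimension $\leq \ell-1$, and $\Kru_\ell(R)$ is by definition $\Kr_\ell(\Zar(R))$. So the asserted equivalence is really a statement about the single lattice $\Zar(R)$ together with its associated lattice $\Kr_\ell(\Zar(R))$, and the only thing to do is to feed the correct generating set into Lemma \ref{lemDimGen}.

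The key observation is that $\Zar(R)$ is generated, as a distributive lattice, by the set $G = \{\widetilde{x}\; ;\; x\in R\}$. Indeed every element of $\Zar(R)$ is the radical of a finitely generated ideal, and $\sqrt{\gen{a_1,\ldots,a_m}} = \widetilde{a_1}\vu\cdots\vu\widetilde{a_m}$ is a finite sup of elements of $G$; hence $G$ generates $\Zar(R)$.

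It then suffices to apply Lemma \ref{lemDimGen} with $L = \Zar(R)$ and this generating set $G$. The lemma states that $\Zar(R)$ is of dimension $\leq \ell-1$ if, and only if, for every sequence of generators the relation
$$ \varphi_0(\widetilde{x_1}),\ldots,\varphi_{\ell-1}(\widetilde{x_\ell}) \vda \varphi_1(\widetilde{x_1}),\ldots,\varphi_\ell(\widetilde{x_\ell}) $$
holds in $\Kr_\ell(\Zar(R))$; since the generators are exactly the $\widetilde{x}$ with $x\in R$, this is the same as requiring the relation for all $x_1,\ldots,x_\ell\in R$. As $\Kr_\ell(\Zar(R)) = \Kru_\ell(R)$, this is precisely the claimed equivalence. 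The only verification needed is that $\widetilde{R}$ generates $\Zar(R)$, which is immediate; there is no genuine obstacle here, since all the substance has already been established in Theorem \ref{thKrJoy} and Lemma \ref{lemDimGen}, the present theorem being merely their transcription to the ring $R$ via the generating family $\widetilde{R}$.
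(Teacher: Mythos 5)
Your proposal is correct and follows exactly the paper's own argument: the proof in the paper is precisely the observation that the elements $\wi{x}$ generate $\Zar(R)$ together with an application of Lemma \ref{lemDimGen}. Your additional remark that $\sqrt{\gen{a_1,\ldots,a_m}} = \wi{a_1}\vu\cdots\vu\wi{a_m}$ justifies the generation claim, which the paper leaves implicit.
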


\begin{proof}{Proof}
This is a direct consequence of Lemma \ref{lemDimGen} and the fact that
the elements $\wi{x}$ generates $\Zar(R)$.
\end{proof}

%--- Theorem{corCollaps}---------------
\begin{theorem}
\label{corCollaps} Let $\cC= ((J_0,U_0),\ldots,(J_\ell,U_\ell))$ be
a list of $\ell+1$ pairs of finite subsets of $R$, \propeq
%-----------------begin item------------------
\begin{enumerate}
\item  there exist $j_i\in \gen{J_i}$, $u_i\in\cM(U_i)$,
$(i= 0,\ldots,\ell)$,
such that
%-----------------begin $$----------------
$$u_0\cdot(u_1\cdot(\cdots(u_\ell+j_\ell)+\cdots)+j_1)+j_0= 0
$$
%-----------------end $$------------------
\item  there exist $L_1,\ldots,L_\ell\in \Zar(R)$ such that
in $\Zar(R)$:
%--------------------begin array---------------
$$\begin{array}{rcl}
  L_1,\; \widetilde{U_0}& \vda  &  \widetilde{J_0}
\\
  L_2,\; \widetilde{U_1}& \vda  &  \widetilde{J_1} ,\;  L_1
\\
\vdots\qquad & \vdots  & \qquad\vdots
\\
L_\ell,\; \widetilde{U_{\ell-1}}&\vda& \widetilde{J_{\ell-1}},\;L_{\ell-
1}
\\
\widetilde{U_\ell}& \vda & \widetilde{J_\ell} ,\; L_\ell
\end{array}$$
%---------------------end array--------------
\item  there exist $x_1,\ldots,x_\ell\in R$ such that
(for the entailment relation described in Proposition \ref{propZar}):
%--------------------begin array---------------
$$\begin{array}{rcl}
  x_1,\; {U_0}& \vda  &  {J_0}
\\
  x_2,\; {U_1}& \vda  &  {J_1} ,\;  x_1
\\
\vdots\qquad & \vdots  & \qquad\vdots
\\
x_\ell,\; {U_{\ell-1}}&\vda& {J_{\ell-1}},\;x_{\ell-
1}
\\
{U_\ell}& \vda & {J_\ell} ,\; x_\ell
\end{array}$$
\end{enumerate}
%-----------------end item------------------
\end{theorem}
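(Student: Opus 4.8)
The plan is to prove the two equivalences $(1)\Leftrightarrow(3)$ and $(2)\Leftrightarrow(3)$, using $(3)$ as the pivot. The link $(2)\Leftrightarrow(3)$ is essentially the fundamental theorem of entailment relations: by Proposition~\ref{propZar} and Theorem~\ref{thEntRel1}, for finite subsets $A,B$ of $R$ one has $A\vda B$ (the relation of Proposition~\ref{propZar}) \ssi $\widetilde{A}\vda_{\Zar(R)}\widetilde{B}$. Thus statement $(3)$ is exactly statement $(2)$ with every auxiliary element $L_i\in\Zar(R)$ restricted to a single generator $\widetilde{x_i}$, so $(3)\Rightarrow(2)$ is immediate (take $L_i=\widetilde{x_i}$) and the real content of $(2)\Leftrightarrow(3)$ is a reduction to generators. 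The concrete algebra lives in the link $(1)\Leftrightarrow(3)$.

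For $(1)\Rightarrow(3)$ I would read the $x_i$ straight off the identity. Writing $e_\ell=u_\ell+j_\ell$ and $e_i=u_i e_{i+1}+j_i$ for $i<\ell$, statement $(1)$ is precisely $e_0=0$. Setting $x_i:=e_i$ yields the relations $u_0x_1=-j_0$, $u_ix_{i+1}=x_i-j_i$ and $u_\ell=x_\ell-j_\ell$; since $u_i\in\cM(U_i)$, each left-hand side lies in the appropriate monoid ($\cM(\{x_{i+1}\}\cup U_i)$, resp. $\cM(U_\ell)$) while the right-hand side lies in $\gen{J_0}$, resp. $\gen{x_i}+\gen{J_i}$, resp. $\gen{x_\ell}+\gen{J_\ell}$. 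By the description of $\vda$ in Proposition~\ref{propZar} this is exactly the chain of $(3)$.

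For $(3)\Rightarrow(1)$ I would run the reverse elimination. Unwinding $(3)$ through Proposition~\ref{propZar} produces $v_i\in\cM(U_i)$, exponents $a_i\ge 0$ and $\lambda_i\in\gen{J_i}$ with $x_1^{a_0}v_0\in\gen{J_0}$, $x_{i+1}^{a_i}v_i=\lambda_i+x_i\mu_i$ and $v_\ell=\lambda_\ell+x_\ell\mu_\ell$ (with $\mu_i\in R$). I would then eliminate $x_1,\dots,x_\ell$ in turn: from the relation that is linear in $x_i$ one solves for $x_i\mu_i$, raises it to the power with which $x_i$ occurs in the neighbouring line, multiplies through, and uses the binomial expansion to absorb every term carrying a factor $\lambda_i$ into $\gen{J_i}$. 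The surviving pure monomials give the $u_i$ as suitable powers of the $v_i$ and the $j_i$ as the accumulated $\gen{J_i}$-contributions, reconstructing the tower $e_0=0$. For the remaining direction $(2)\Rightarrow(3)$, each $L_i\in\Zar(R)$ is a finite join $\bigvee_k\widetilde{y_{i,k}}$; unwinding the conclusion-side entailment $\widetilde{U_i}\vda\widetilde{J_i},L_i$ gives a representation $w=\lambda+\sum_k r_k y_{i,k}$ of some $w\in\cM(U_i)$, and I would set $x_i:=\sum_k r_k y_{i,k}$. The hypothesis side (which forces each $\widetilde{y_{i,k}}$ individually) combined with a pigeonhole estimate on a high power of $x_i$ then shows this single element does the job.

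The main obstacle is the exponent bookkeeping in $(3)\Rightarrow(1)$: each elimination raises a relation to a power imported from an adjacent line, and one must check both that these powers can be chosen coherently and that the leftover $\gen{J_i}$-terms land in the correct structural slot of the nested identity rather than leaking into a neighbouring level. A naive induction that merges the top two lines does shorten the chain but does not by itself recover the nested shape, so the reconstruction of the $e_i$-tower must be organized with care; once that is done, the algebra is routine.
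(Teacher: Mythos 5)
Your proof is correct and follows essentially the same route as the paper: the cycle of implications is the same, and your $(1)\Rightarrow(3)$ (reading the $x_i$ off the nested identity as the partial evaluations $e_i$) and $(3)\Rightarrow(2)$ are exactly the paper's. Two remarks on the remaining steps. For $(2)\Rightarrow(3)$ the paper does what you do but more directly: the line $\wi{U_i} \vda \wi{J_i},\, L_i$ produces an element of $\cM(U_i)$ lying in $\gen{J_i}+I$, where $I$ is the finitely generated ideal with $L_i=\sqrt{I}$, and one simply takes for $x_i$ the single element of $I$ appearing in that decomposition (your $\sum_k r_k y_{i,k}$); since $\wi{x_i}\leq L_i$, the hypothesis-side line follows by plain monotonicity of $\vda$ --- no distributivity over the generators of $L_i$ and no pigeonhole estimate on powers of $x_i$ is needed. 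The one place where the paper's organization genuinely buys something is precisely your stated ``main obstacle'', the bookkeeping in $(3)\Rightarrow(1)$: instead of eliminating the $x_i$ by hand with binomial expansions, the paper introduces the nested \emph{monoids} $M_\ell=\cM(U_\ell)+\gen{J_\ell}$ and $M_i=M_{i+1}\cM(U_i)+\gen{J_i}$, whose elements are exactly the expressions $u_i(u_{i+1}(\cdots)+j_{i+1})+j_i$, and descends from the top line, producing at each step an element $y_i\in M_i\cap\gen{x_i}$; because each $M_i$ is closed under multiplication, the power of $y_i$ demanded by the next line stays inside $M_i$, so the $\gen{J_i}$-contributions automatically land in the correct structural slot, and the conclusion is literally the statement $0\in M_0$. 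Your bottom-up elimination does work (the $\gen{J_i}$ remainders from the binomial expansion come out multiplied by the accumulated $\cM(U_0)\cdots\cM(U_{i-1})$ factor, as the nested form requires), but the monoid formulation is the clean way to discharge exactly the coherence worry you raise, and you may want to adopt it.
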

%--- end-theorem------------------------------------
%-----------------begin proof------------------
\begin{proof}{Proof}
It is clear that $1$ entails $3$: simply take
$$x_\ell =  u_\ell+j_\ell,~x_{\ell-1} =  x_\ell u_{\ell-1} +
j_{\ell-1},\dots,~x_0 =  x_1u_0 + j_0
$$
and that $3$ entails $2$.

Let us prove that $2$ implies $3$. We assume:
%--------------------begin array---------------
$$\begin{array}{rcl}
  L_1,\; \widetilde{U_0}& \vda  &  I_0  \\
  L_2,\; \widetilde{U_1}& \vda  &  I_1 ,\;  L_1  \\
  \widetilde{U_2}& \vda  &  I_2 ,\; L_2
\end{array}$$
%---------------------end array--------------
The last line means that $\cM(U_2)$ intersects $I_2+ L_2$
and hence  $I_2+ \gen{x_2}$ for some element $x_2$ of
$L_2$. Hence we have
$ \wi{U_2} \vda  I_2 ,\; \wi{x_2}$.
Since $\wi{x_2}\leq L_2$ in $\Zar(R)$ we have
  $ \;\wi{x_2},\; \wi{U_1} \vda  I_1 ,\; L_1$.
We have then replaced  $L_2$ by $\wi{x_2}$.
Reasoning as previously one sees that one can replace as
well $L_1$ by a suitable $\wi{x_1}$. One gets then $3$.

 Finally, let us show that $3$ entails $1$: if we have
for instance
$$\begin{array}{rcl}
  x_1,\; {U_0}& \vda  &  I_0  \\
  x_2,\; {U_1}& \vda  &  I_1 ,\;  x_1  \\
  {U_2}& \vda  &  I_2 ,\; x_2  \\
\end{array}$$
by the last line we know that we can find $y_2$ both in
the monoid $M_2 = \cM(U_2) + \gen{I_2}$ and in $\gen{x_2}.$
Since $y_2\vdash x_1$
$$  y_2,\; {U_1} \vda    I_1 ,\;  x_1$$
and since $y_2\in M_2$ we can find $y_1$ both in the monoid
$M_1 = M_2\cM(U_1) + \gen{I_1}$ and in $\gen{x_1}$. We have
$y_1\vdash x_1$ and hence
$$ y_1,U_0\vdash I_0$$
and since $y_1\in M_1$ this implies
$0\in M_1\cM(U_0)+\gen{I_0}$ as desired.
\end{proof}
%-----------------end proof------------------

\begin{corollary}
\label{KrulldimRing}
A ring $R$ is of Krull dimension $\leq {\ell}-1$ iff
for any sequence $x_1,\dots,x_{\ell}$ there exist
 $a_1,\ldots,a_\ell\in R$
and  $m_1,\ldots,m_\ell\in \N$ such that
%-----------------begin $$----------------
$$ x_1^{m_1}(\cdots(x_\ell^{m_\ell}(1+a_\ell x_\ell)+\cdots)+a_1x_1)=
0
$$
%-----------------end $$------------------
\end{corollary}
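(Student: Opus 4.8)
The plan is to read the corollary off from Theorem~\ref{dimRing} together with the equivalence of the first two statements of Theorem~\ref{corCollaps}, applied to the elementary idealistic chain attached to $x_1,\dots,x_\ell$. By Theorem~\ref{dimRing}, $R$ has Krull dimension $\leq\ell-1$ if, and only if, for every sequence $x_1,\dots,x_\ell\in R$ one has in $\Kru_\ell(R)$
$$\varphi_0(\widetilde{x_1}),\dots,\varphi_{\ell-1}(\widetilde{x_\ell})\vda\varphi_1(\widetilde{x_1}),\dots,\varphi_\ell(\widetilde{x_\ell}).$$
I would first recognise this entailment as the collapse of the list
$$\cC=\bigl((\{0\},\{x_1\}),(\{x_1\},\{x_2\}),\dots,(\{x_{\ell-1}\},\{x_\ell\}),(\{x_\ell\},\{1\})\bigr),$$
viewed as $\ell+1$ pairs of finite subsets of $R$: writing out the collapse relation $\varphi_0(\widetilde{U_0}),\dots\vda\varphi_0(\widetilde{J_0}),\dots$ and discarding the neutral terms $\varphi_\ell(\widetilde{1})=1$ on the left and $\varphi_0(\widetilde{0})=0$ on the right returns exactly the displayed relation.

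Next I would apply the equivalence of the first two statements of Theorem~\ref{corCollaps} to this $\cC$: the chain collapses if, and only if, there exist $j_i\in\gen{J_i}$ and $u_i\in\cM(U_i)$ with
$$u_0\cdot(u_1\cdot(\cdots(u_\ell+j_\ell)+\cdots)+j_1)+j_0=0.$$
It then remains to determine the admissible shape of each factor for the chain $\cC$. Since $J_0=\{0\}$ we must take $j_0=0$; since $U_\ell=\{1\}$ we must take $u_\ell=1$; for $0\leq i\leq\ell-1$ the singleton $U_i=\{x_{i+1}\}$ forces $u_i=x_{i+1}^{m_{i+1}}$ for some $m_{i+1}\in\N$; and for $1\leq i\leq\ell$ the singleton $J_i=\{x_i\}$ forces $j_i=a_i x_i$ for some $a_i\in R$. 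Substituting these and relabelling the exponents collapses the nested identity to
$$x_1^{m_1}(\cdots(x_\ell^{m_\ell}(1+a_\ell x_\ell)+\cdots)+a_1 x_1)=0,$$
which is exactly the characterisation asserted by the corollary, the existential quantifiers on $a_1,\dots,a_\ell$ and $m_1,\dots,m_\ell$ arising uniformly for each fixed sequence $x_1,\dots,x_\ell$.

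The argument is in essence a translation between the lattice collapse and a single polynomial identity, so I do not expect a genuinely hard step. The only point demanding care is the index bookkeeping in the final substitution: one must check that the monoid generator attached to $U_i=\{x_{i+1}\}$ enters as the power $x_{i+1}^{m_{i+1}}$ at the correct depth of the nesting, and that the ideal generator $j_i=a_i x_i$ appears as the additive term $a_i x_i$ at the matching level, so that after relabelling the expression acquires precisely the stated nesting of powers and summands.
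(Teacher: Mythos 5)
Your proposal is correct and follows essentially the same route as the paper: Theorem~\ref{dimRing} reduces the dimension condition to the collapse of the elementary idealistic chain $((0,\wi{x_1}),(\wi{x_1},\wi{x_2}),\dots,(\wi{x_\ell},1))$, and Theorem~\ref{corCollaps} converts that collapse into the polynomial identity, with $j_0=0$, $u_\ell=1$, $j_i=a_ix_i$ and $u_i=x_{i+1}^{m_{i+1}}$ exactly as you describe. Your write-up just makes explicit the index bookkeeping that the paper leaves implicit.
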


\begin{proof}{Proof}
By Theorem \ref{dimRing}, we have in $\Kru _\ell (R)$
$$ \varphi_0(\wi{x_1}),\dots,\varphi_{\ell-1}(\wi{x_{\ell}})
\vda
\varphi_1(\wi{x_1}),\dots,\varphi_{\ell}(\wi{x_{\ell}})
$$
we can then apply Theorem \ref{corCollaps} to the
elementary idealistic chain
$$((0,\wi{x_1}),(\wi{x_1},\wi{x_2}),\dots,(\wi{x_{\ell}},1))$$
and we get in this way $j_i\in \gen{x_i},j_0 = 0$ and
$u_i\in \cM(x_{i+1}),u_\ell = 1$ such that
$$u_0\cdot(u_1\cdot(\cdots(u_\ell+j_\ell)+\cdots)+j_1)+j_0= 0$$
as desired.
\end{proof}

This concrete characterisation of the Krull dimension of a ring
can be found in \cite{lom}, where it is derived using dynamical
methods \cite{clr}.

\begin{lemma}
\label{ZarImpl}
If $R$ is coherent and noetherian then  $\Zar(R)$ is an implicative
lattice.
\end{lemma}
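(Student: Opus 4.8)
The plan is to produce, for any two elements $a=\sqrt{I}$ and $b=\sqrt{J}$ of $\Zar(R)$ (with $I,J$ finitely generated ideals of $R$), an explicit candidate for the relative pseudocomplement $a\rightarrow b$, and then to check both the adjunction $c\vi a\leq b\Leftrightarrow c\leq(a\rightarrow b)$ and the fact that the candidate really is an element of $\Zar(R)$. The natural candidate is the ideal quotient $K:=(\sqrt{J}:I)=\{x\in R\;;\;xI\subseteq\sqrt{J}\}$. First I would note that $K$ depends only on $a$ and $b$: since $\sqrt{J}$ is a radical ideal, $xI\subseteq\sqrt{J}$ already forces $x\sqrt{I}\subseteq\sqrt{J}$ (if $z^k\in I$ then $(xz)^k=x^{k-1}(x\,z^k)\in\sqrt{J}$), so $K=(\sqrt{J}:\sqrt{I})$ is determined by the radicals alone and is independent of the chosen generators.

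Next I would verify the two formal properties. That $K$ is a radical ideal follows by the same kind of computation: if $x^{m}\in K$ and $y$ is a generator of $I$, then $(xy)^{m}=y^{m-1}(x^{m}y)\in\sqrt{J}$, hence $xy\in\sqrt{J}$, and so $x\in K$; thus $K=\sqrt{K}$. For the adjunction, take any $c=\sqrt{L}\in\Zar(R)$ with $L$ finitely generated. Then $c\vi a=\sqrt{LI}$, and, because $\sqrt{J}$ is radical,
$$ \sqrt{LI}\leq\sqrt{J}\quad\Longleftrightarrow\quad LI\subseteq\sqrt{J}\quad\Longleftrightarrow\quad L\subseteq K\quad\Longleftrightarrow\quad\sqrt{L}\leq K. $$
Since every element of $\Zar(R)$ has the form $\sqrt{L}$, this shows that $K$ satisfies the defining property of $a\rightarrow b$, provided only that $K$ is itself an element of $\Zar(R)$.

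The heart of the proof, and the only place where the hypotheses are used, is therefore to show that $K$ is the radical of a finitely generated ideal. Since $K=\sqrt{K}$ is already radical, it suffices to show that $K$ is finitely generated. Here I would invoke noetherianity to replace $J$ by the finitely generated ideal $\sqrt{J}$ (in a noetherian ring the radical $\sqrt{J}$ is itself finitely generated), and then write $K=\bigcap_{i=1}^{n}(\sqrt{J}:y_i)$ where $I=\gen{y_1,\dots,y_n}$. Coherence of $R$ then gives exactly what is needed: each ideal quotient $(\sqrt{J}:y_i)$ of the finitely generated ideal $\sqrt{J}$ by an element is finitely generated, and the finite intersection of finitely generated ideals is again finitely generated. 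Hence $K$ is finitely generated, so $K=\sqrt{K}\in\Zar(R)$, and $a\rightarrow b=K$ makes $\Zar(R)$ an implicative lattice.

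I expect the main obstacle to be precisely this last step: producing $K$ as a finitely generated radical ideal in a uniform and (constructively) effective way. Noetherianity alone gives the finite generation of $\sqrt{J}$ but not of the quotient and intersection, while coherence alone controls quotients and intersections of finitely generated ideals but not the passage to the radical $\sqrt{J}$; it is the conjunction of the two hypotheses that delivers $K\in\Zar(R)$.
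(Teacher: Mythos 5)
Your candidate $K=(\sqrt{J}:I)$ is the right object, and your verifications that $K$ is radical and that it satisfies the adjunction $c\vi a\leq b\Leftrightarrow c\leq K$ are correct. The gap is in the one step where the hypotheses are actually used: you ``invoke noetherianity to replace $J$ by the finitely generated ideal $\sqrt{J}$''. In the constructive setting of this paper, noetherianity means that every ascending chain of \emph{finitely generated} ideals pauses; it does \emph{not} yield that the radical of a finitely generated ideal is finitely generated. Indeed the paper applies this lemma to $R=K[X_1,\dots,X_n]$ over an arbitrary discrete field $K$, and already for $K[X]$ with $K$ discrete of characteristic $p$ one cannot in general exhibit a finite generating set for $\sqrt{\gen{X^p-a}}$: it is $\gen{X-b}$ if $a=b^p$ and $\gen{X^p-a}$ otherwise, and whether $a$ is a $p$-th power need not be decidable. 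So the sentence ``in a noetherian ring the radical $\sqrt{J}$ is itself finitely generated'' is exactly the non-effective step, and once it is removed your computation of $K$ as $\bigcap_i(\sqrt{J}:y_i)$ has nothing to feed coherence with.

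The paper's proof is arranged precisely to avoid ever presenting $\sqrt{J}$ by generators. For a single $x$ it considers the ideals $I_k=\{z\in R\;;\;zx^k\in\gen{y_1,\dots,y_n}\}$, i.e.\ quotients of the \emph{given} finitely generated ideal, not of its radical: coherence makes each $I_k$ finitely generated, noetherianity makes the increasing chain $I_0\subseteq I_1\subseteq\cdots$ stationary, and $\wi{x}\rightarrow L$ is defined as the radical of the (finitely generated) union $\bigcup_k I_k$; one checks that this radical equals your $(\sqrt{J}:x)$. The general case $M\rightarrow L$ is then the finite meet $\bigwedge_i(\wi{x_i}\rightarrow L)$, which is your intersection $\bigcap_i(\sqrt{J}:y_i)$ in disguise. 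To repair your argument, replace the appeal to ``$\sqrt{J}$ is finitely generated'' by this stabilizing chain of transporters into $J$ itself; the rest of what you wrote (radicality of $K$, the adjunction, the reduction to generators of $I$) can be kept as is.
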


\begin{proof}{Proof}
Let $L\in \Zar(R)$, radical of an ideal generated by
elements $y_1,\dots,y_n$  and $x\in R$, we show how to define an element
$\wi{x}\rightarrow L\in \Zar(R)$ such that, for any $M\in \Zar(R)$
$$
M\wedge \wi{x}\leq L \Equ M\leq \wi{x}\rightarrow L
$$
For this, we consider the sequence of ideals
$$
I_k = \{ z\in R~|~z x^k \in \gen{y_1,\dots,y_n}\}
$$
Since $R$ is coherent, each $I_k$ is finitely generated. Since
furthermore $R$ is noetherian and $I_k\subseteq I_{k+1}$ the sequence
$I_k$ is stationary and $\bigcup _k I_k$ is finitely generated.
We take for $\wi{x}\rightarrow L$ the radical of this ideal.

 If $M\in \Zar(R)$ then $M$ is the radical of an ideal
generated by finitely many elements $x_1,\dots,x_m$ and
we can take
$M\rightarrow L =
  (\wi{x_1}\rightarrow L)\wedge\dots\wedge(\wi{x_m}\rightarrow L).$
\end{proof}

\begin{corollary}
If $R$ is coherent, noetherian and
strongly discrete then each lattice $\Kr_n(R)$ is discrete.
\end{corollary}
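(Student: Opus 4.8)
The plan is to reduce the whole statement to Theorem \ref{decKr}. By Definition \ref{defKruA} we have $\Kru_n(R)=\Kr_n(\Zar(R))$, so it suffices to check that $\Zar(R)$ is a \emph{discrete implicative lattice} and then invoke Theorem \ref{decKr} with $L=\Zar(R)$. Lemma \ref{ZarImpl} already supplies the implicative structure from the hypotheses that $R$ is coherent and noetherian, so the only remaining task is to prove that $\Zar(R)$ is discrete, \cad that its ordering is decidable.

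To decide that ordering I would use the explicit description of $\Zar(R)$ recalled before Proposition \ref{propZar}: for finite subsets $U,J$ of $R$,
\[ \wi{U}\,\vdash_{\Zar(R)}\wi{J}\Equ \prod_{u\in U}u\in\sqrt{\gen{J}}. \]
Hence deciding $\vdash_{\Zar(R)}$ reduces to deciding, for a single element $a\in R$ (namely $a=\prod_{u\in U}u$) and a finitely generated ideal $\gen{J}$, whether $a\in\sqrt{\gen{J}}$, \cad whether $a^k\in\gen{J}$ for some $k\in\N$. A priori this is an unbounded search over $k$, and converting it into a terminating decision is the crux of the argument.

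Here the three hypotheses combine in exactly the way already used in Lemma \ref{ZarImpl}. Consider the ascending chain of colon ideals $I_k=\{\,z\in R \mid z\,a^k\in\gen{J}\,\}$ (the same shape as the chain appearing in that proof). Coherence guarantees that each $I_k$ is finitely generated; strong discreteness lets me test membership in any such finitely generated ideal, hence detect an equality $I_N=I_{N+1}$; and noetherianity guarantees that such an $N$ is reached (and, by the standard colon-ideal argument, that once $I_N=I_{N+1}$ the chain is stationary from $N$ on). Then $a\in\sqrt{\gen{J}}$ \ssi $1\in\bigcup_k I_k=I_N$, \cad \ssi $a^N\in\gen{J}$, which strong discreteness decides. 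This establishes that $\Zar(R)$ is discrete.

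Combining the three facts — $\Zar(R)$ implicative (Lemma \ref{ZarImpl}), $\Zar(R)$ discrete (the previous paragraph), and Theorem \ref{decKr} — yields that each $\Kr_n(\Zar(R))=\Kru_n(R)$ is discrete, which is the claim. I expect the main obstacle to be the middle step: giving the search for $k$ a constructive halting criterion. Everything there rests on extracting from the constructive content of noetherianity a \emph{detectable} stabilization point $N$ of the chain $(I_k)$ — precisely the stabilization already exploited for $\Zar(R)$ in Lemma \ref{ZarImpl}; once $N$ is in hand, the final decision $a^N\in\gen{J}$ is immediate from strong discreteness.
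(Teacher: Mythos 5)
Your proof is correct and follows essentially the same route as the paper: reduce to Theorem \ref{decKr} via Lemma \ref{ZarImpl}, then show $\Zar(R)$ is discrete. The paper phrases the last step more compactly --- $M\leq L$ \ssi $1=M\rightarrow L$, and testing whether an element of $\Zar(R)$ equals $\gen{1}$ is decidable by strong discreteness --- but this is exactly your stabilized colon-ideal computation unfolded, since $\wi{a}\rightarrow L$ is by construction the radical of the stationary ideal $I_N$.
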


\begin{proof}{Proof}
Using Theorem \ref{decKr} and Lemma \ref{ZarImpl} we are left to
show that $\Zar(R)$ is discrete. We have $M\leq L$ if, and
only if, $1 = M\rightarrow L$. But to test if an element
of $\Zar(R)$ is equal to the ideal $\gen{1}$ is decidable since
$R$ is strongly discrete.
\end{proof}

The hypotheses of this corollary are satisfied if $R$ is a polynomial ring
$K[X_1,\dots,X_n]$ over a discrete field $K$ \cite{MRR}.

%--- SUBSUBsection{Dim Krull Pol}--------------
\subsection{Krull dimension of a polynomial ring over a discrete
field}
%-----------------------------------------
Let $R$ be a commutative ring, let us say that a sequence
$x_1,\dots,x_\ell$ is {\em singular} if, and only if, there exists
 $a_1,\ldots,a_\ell\in R$
and  $m_1,\ldots,m_\ell\in \N$ such that
%-----------------begin $$----------------
$$ x_1^{m_1}(\cdots(x_\ell^{m_\ell}(1+a_\ell x_\ell)+\cdots)+a_1x_1)=
0
$$
A sequence
is {\em pseudo regular} if, and only if, it is not singular.
Corollary \ref{KrulldimRing} can be reformulated as: a ring $R$
is of Krull dimension $\leq {\ell}-1$ if, and only if, any sequence
in $R$ of length $\ell$ is singular.

%--- Proposition{propKrDimetDegTr}-----
\begin{proposition}
\label{propKrDimetDegTr}
Let $K$ be a discrete field, $R$ a commutative $K$-algebra, and $x_1$,
\ldots, $x_\ell$ in $R$ algebraically dependent over $K$.
The sequence $x_1,\dots,x_\ell$ is singular.
\end{proposition}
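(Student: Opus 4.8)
The plan is to reduce the statement to a purely polynomial identity and then to convert that identity, by a descending ``peeling'' of the variables, into the nested form defining singularity. Since $x_1,\dots,x_\ell$ are algebraically dependent over $K$, there is a nonzero polynomial $P\in K[X_1,\dots,X_\ell]$ with $P(x_1,\dots,x_\ell)=0$ in $R$. I fix the lexicographic order on monomials with $X_1$ most significant, and I let $M=X_1^{m_1}\cdots X_\ell^{m_\ell}$ be the lexicographically \emph{smallest} monomial occurring in $P$. Because $K$ is a discrete field, the coefficient of $M$ in $P$ is invertible, so after dividing by it I may assume that this coefficient equals $1$. The exponents $m_1,\dots,m_\ell$ found here will be exactly those appearing in the singularity identity.

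Next I would peel off one variable at a time. Set $P^{(0)}=P$, and for $k=1,\dots,\ell$ group $P^{(k-1)}\in K[X_k,\dots,X_\ell]$ according to the power of $X_k$. The key point is that the lexicographic minimality of $M$ forces $m_k$ to be the \emph{least} exponent of $X_k$ occurring in $P^{(k-1)}$: a monomial of $P^{(k-1)}$ with $X_k$-exponent $<m_k$ would, together with the already-fixed exponents $m_1,\dots,m_{k-1}$, produce a monomial of $P$ lexicographically smaller than $M$. Hence I can write
$$ P^{(k-1)} \;=\; X_k^{m_k}\bigl(P^{(k)}+X_k\,Q_k\bigr), $$
where $P^{(k)}\in K[X_{k+1},\dots,X_\ell]$ is the coefficient of $X_k^{m_k}$ and $Q_k\in K[X_k,\dots,X_\ell]$ collects the higher powers. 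The same minimality argument shows that $P^{(k)}$ again contains the monomial $X_{k+1}^{m_{k+1}}\cdots X_\ell^{m_\ell}$ with coefficient $1$; in particular $P^{(k)}\neq 0$, so the process continues and terminates with $P^{(\ell-1)}=X_\ell^{m_\ell}(1+X_\ell Q_\ell)$ and $P^{(\ell)}=1$.

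Finally I would assemble the identity. Put $a_k:=Q_k(x_k,\dots,x_\ell)\in R$ and let $S_\ell:=x_\ell^{m_\ell}(1+a_\ell x_\ell)$ and $S_{k}:=x_k^{m_k}(S_{k+1}+a_k x_k)$ for $k<\ell$. Unwinding the factorizations above, evaluated at $x_1,\dots,x_\ell$, gives $S_k=P^{(k-1)}(x_k,\dots,x_\ell)$ for every $k$ by downward induction, and in particular $S_1=P(x_1,\dots,x_\ell)=0$. This is precisely the identity $x_1^{m_1}(\cdots(x_\ell^{m_\ell}(1+a_\ell x_\ell)+\cdots)+a_1x_1)=0$, so the sequence is singular. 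The main obstacle is the bookkeeping in the middle step: one must check carefully that lexicographic minimality of $M$ is exactly what guarantees, at \emph{every} stage, both that the minimal $X_k$-exponent is $m_k$ and that the distinguished sub-coefficient $P^{(k)}$ stays monic at $X_{k+1}^{m_{k+1}}\cdots X_\ell^{m_\ell}$, for it is this double invariant that makes the telescoping reconstruction close up exactly onto $P$.
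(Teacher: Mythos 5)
Your proof is correct and takes essentially the same route as the paper's: both normalize the lexicographically minimal monomial $X_1^{m_1}\cdots X_\ell^{m_\ell}$ of a dependence relation to be monic and then regroup the remaining monomials according to the first index at which the exponent exceeds $m_k$, which yields exactly the nested singularity identity. Your variable-by-variable peeling is simply a more detailed, recursive presentation of the one-line decomposition the paper writes down directly.
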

%--- end-proposition----------------------------------------
%-----------------begin proof------------------
\begin{proof}{Proof}
Let $Q(x_1,\ldots,x_\ell)= 0$ be a algebraic dependence relation
over $K$. Let us order the nonzero monomials of $Q$ along
the lexicographic ordering. We can suppose that the coefficient of
the first monomial is
$1$. Let
$x_1^{m_1}x_2^{m_2}\cdots x_\ell^{m_\ell}$ be this momial,
it is clear that
$Q$ can be written on the form
%-----------------begin $$----------------
$$ Q= x_1^{m_1}\cdots x_\ell^{m_\ell}+
x_1^{m_1}\cdots x_\ell^{1+m_\ell}R_\ell+
x_1^{m_1}\cdots x_{\ell-1}^{1+m_{\ell-1}}R_{\ell-1}+\cdots+
x_1^{m_1}x_2^{1+m_2}R_2+ x_1^{1+m_1}R_1
$$
%-----------------end $$------------------
and this is the desired collapsus.
\end{proof}
%-----------------end proof------------------

Let us say that a ring is of dimension $\ell$ if it is of dimension $\leq {\ell}$
but not of dimension $\leq {\ell}-1$. It follows that we have:
%--- Theorem{thKDP}--------------------
\begin{theorem}
\label{thKDP} Let $K$ be a discrete field. The Krull dimension of the
ring
$K[X_1,\ldots,X_\ell]$ is equal to $\ell$.
\end{theorem}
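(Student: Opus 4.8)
The plan is to prove the two bounds separately, using throughout the concrete ``singular sequence'' criterion of Corollary~\ref{KrulldimRing}: the ring $R$ has Krull dimension $\leq \ell-1$ if, and only if, every sequence of length $\ell$ in $R$ is singular. Write $R= K[X_1,\ldots,X_\ell]$. I must show $\dim R\leq \ell$ and $\dim R\not\leq \ell-1$.

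For the upper bound $\dim R\leq \ell$, I would show that every sequence $p_1,\ldots,p_{\ell+1}$ of $\ell+1$ elements of $R$ is singular. The ring $R$ has transcendence degree $\ell$ over $K$, so any $\ell+1$ of its elements are algebraically dependent over $K$; constructively the dependence relation comes from a dimension count, since for $N$ large the number of monomials of degree $\leq N$ in the $p_i$ eventually exceeds the dimension of the space of polynomials of degree $\leq N\max_i\deg p_i$, forcing a nonzero algebraic relation. Proposition~\ref{propKrDimetDegTr} then turns this algebraic dependence directly into singularity of $p_1,\ldots,p_{\ell+1}$, and Corollary~\ref{KrulldimRing}, applied with $\ell+1$ in place of $\ell$, gives $\dim R\leq \ell$.

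For the lower bound I would exhibit a single pseudo-regular sequence, the natural candidate being $X_1,\ldots,X_\ell$, and show it is not singular. Suppose it were: then there are $a_i\in R$ and $m_i\in\N$ making the nested identity of Corollary~\ref{KrulldimRing} vanish. I describe this identity from the inside out by $F_\ell= 1+a_\ell X_\ell$, then $F_k= X_{k+1}^{m_{k+1}}F_{k+1}+a_kX_k$ for $1\leq k\leq \ell-1$, and finally $F_0= X_1^{m_1}F_1= 0$. Since $R$ is an integral domain and each $X_{k}^{m_{k}}$ is nonzero, I can strip the factors off one at a time. Let $\sigma_j$ be the substitution $X_1=\cdots=X_j=0$. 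From $F_0= 0$ the domain property gives $F_1= 0$; applying $\sigma_1$ kills the term $a_1X_1$ and leaves $X_2^{m_2}\,\sigma_1(F_2)= 0$, hence $\sigma_1(F_2)= 0$; iterating, $\sigma_k(F_{k+1})= 0$ for every $k$, so in particular $\sigma_{\ell-1}(F_\ell)= 0$. But $\sigma_{\ell-1}(F_\ell)= 1+\sigma_{\ell-1}(a_\ell)X_\ell$ has constant term $1$ and is therefore nonzero in $K[X_\ell]$, a contradiction. Hence $X_1,\ldots,X_\ell$ is pseudo-regular and $\dim R\not\leq \ell-1$; combined with the upper bound this yields $\dim R= \ell$.

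The main obstacle is the lower bound, because one must exclude \emph{every} possible identity that could make the nested expression vanish, not merely a fixed one. The device that handles this is the successive substitution $X_i\mapsto 0$ together with the cancellation licensed by $R$ being a domain: these force the innermost factor $1+a_\ell X_\ell$ to collapse to $0$ even though it visibly retains constant term $1$. The upper bound, by contrast, is essentially immediate once Proposition~\ref{propKrDimetDegTr} is available, the only genuinely constructive point being the explicit production of an algebraic relation among $\ell+1$ polynomials.
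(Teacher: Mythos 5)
Your proof is correct and follows the same route as the paper: the upper bound via Proposition \ref{propKrDimetDegTr} applied to $\ell+1$ algebraically dependent polynomials, and the lower bound by checking that $(X_1,\ldots,X_\ell)$ is pseudo-regular. The paper dismisses the latter verification as ``direct''; your successive substitutions $X_i\mapsto 0$ combined with cancellation in the domain $K[X_1,\ldots,X_\ell]$ supply exactly the details it omits, and they are sound.
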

%--- end-theorem-----------------------------------------
%-----------------begin proof------------------
\begin{proof}{Proof}
Given Proposition \ref{propKrDimetDegTr} it is enough to check that the
sequence $(X_1,\ldots,X_\ell)$ is pseudo regular, which is direct.
\end{proof}
%-----------------end proof------------------

Notice that we got this basic result quite directly from the
characterisation of Corollary \ref{KrulldimRing}, and that our
argument is of course also valid classically (with the usual
definition of Krull dimension).
This contradicts the current opinion that constructive arguments are
necessarily more involved than classical proofs.

%:--- biblio -----------------------------------------
\newpage

%--- Annexe {Le \tcg et LLPO} --
\newpage
\markboth{Annex: The \tcg and LLPO}{Annex: The \tcg and LLPO}

%---     -----------------------------------------------
\setcounter{section}{1}\setcounter{subsection}{0}
\setcounter{theorem}{0}
\def\thesection{\Alph{section}}
\addcontentsline{toc}{section}{Annex: The \tcg and LLPO}
\section*{Annex: Completeness, compactness theorem, LLPO and geometric theories}

\subsection{Theories and models}

  We fix a set $V$ of {\em atomic propositions} or
{\em propositional letters}. A proposition $\phi,\psi,\dots$
is a syntactical
object built from the atoms $p,q,r\in V$ with the
usual logical connectives
$$ 0,\;\;1,\;\;\phi\wedge \psi,\;\;\phi\vee\psi,\;\;\phi\rightarrow\psi,\;\;
\neg\phi$$
We let $P_V$ be the set of all propositions. Let $F_2$
be the Boolean algebra with two elements.
A {\em valuation}
is a function $v\in F_2^V$ that assigns a truth value
to any of the atomic propositions. Such a valuation can
be extended to a map
$P_V\rightarrow \{0,1\},\;\phi\longmapsto v(\phi)$
in the expected way. A {\em theory} $T$ is a subset of $P_V$.
A {\em model} of $T$ is a valuation $v$ such that $v(\phi)=1$
for all $\phi\in T$.

 More generally given a Boolean algebra $B$ we can define
$B$-valuation to be a function $v\in B^V$. This can be extended
as well to a map $P_V\rightarrow B,\;\phi\longmapsto v(\phi)$.
A {\em $B$-model} of $T$ is a valuation $v$ such that $v(\phi)=1$
for all $\phi\in T$. The usual notion of model is a direct special
case, taking for $B$ the Boolean algebra $F_2$.
For any theory there exists always a
free Boolean algebra over which $T$ is a model, the {\em Lindenbaum}
algebra of $T$, which can be also be defined as the Boolean
algebra generated by $T$, thinking of the elements of $V$ as
generators and the elements of $T$ as relations. The theory $T$
is {\em formally consistent} if, and only if, its Lindenbaum algebra is not trivial.

\subsection{Completeness theorem}

\begin{theorem}
(Completeness theorem) Let $T$ be a theory. If $T$ is formally
consistent then $T$ has a model.
\end{theorem}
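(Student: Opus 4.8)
The plan is to reduce the statement to the existence of a point of the spectrum of the Lindenbaum algebra, and then to produce such a point by a maximal-ideal argument. First I would record the dictionary between models of $T$ and Boolean algebra homomorphisms $B \to F_2$, where $B$ denotes the Lindenbaum algebra of $T$. Indeed a valuation $v \in F_2^V$ extends uniquely to a map $P_V \to F_2$ respecting $0,1,\wedge,\vee,\rightarrow,\neg$, and $v$ is a model of $T$ exactly when it sends every $\phi \in T$ to $1$; since $B$ is by definition the free Boolean algebra on the generators $V$ modulo the relations contributed by the elements of $T$, this happens precisely when $v$ factors as $V \to B \to F_2$ with the second arrow a homomorphism. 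Conversely any homomorphism $B \to F_2$ restricts along the canonical map $V \to B$ to a model. So it suffices to exhibit one homomorphism $B \to F_2$, i.e. one point of $\Spec(B)$. Formal consistency is exactly the hypothesis $0_B \neq 1_B$, that is, nontriviality of $B$.

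Next I would produce the homomorphism from a maximal proper ideal of $B$. Consider the poset of ideals of $B$ that do not contain $1$, ordered by inclusion. It is nonempty, since nontriviality of $B$ makes $\{0\}$ such an ideal, and the union of a chain of such ideals is again an ideal not containing $1$, because $1$ lies in none of the members. By Zorn's lemma there is a maximal element $M$. In a Boolean algebra such a maximal proper ideal is prime: the quotient $B/M$ has no proper nonzero ideal and hence is the two-element algebra $F_2$, so the quotient map $B \to F_2$ is the homomorphism we want. Equivalently one sets $v(x)=0$ iff $x \in M$ and checks, using $x \wedge \overline{x} = 0$ together with the primeness of $M$, that $v$ respects the connectives. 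Restricting $v$ along $V \to B$ yields the desired model of $T$.

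The main obstacle, and indeed the whole point the paper is making, is concentrated in the single non-constructive step: the appeal to Zorn's lemma (equivalently the Boolean prime ideal theorem, or the ultrafilter lemma) to extract $M$. Everything else is routine bookkeeping, namely the model/homomorphism correspondence, the verification that a chain union stays proper, and the fact that a maximal ideal of a Boolean algebra is prime. I would also note the alternative route advertised in the introduction, through the compactness theorem: formal consistency forces every finite subset of $T$ to have a model, a base case settled by a finite truth-table computation on the finitely many atoms occurring in it, and compactness then lifts these partial models to a model of all of $T$. That route merely repackages the same non-constructive core, since compactness for the valuation space rests on Tychonoff for $2^V$, equivalently on the ultrafilter lemma.
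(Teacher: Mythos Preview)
Your proposal is correct and follows essentially the same route as the paper: reduce models of $T$ to homomorphisms $B\to F_2$ from the Lindenbaum algebra, and then invoke the existence of a prime (equivalently maximal) ideal in a nontrivial Boolean algebra. The paper stops at that reduction and defers the actual existence to the Boolean prime ideal theorem (and to the compactness argument in the next subsection), whereas you spell out the Zorn's lemma step explicitly; this is a harmless elaboration, not a different strategy.
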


 This theorem is the completeness theorem for propositional logic.
Such a theorem is strongly related to Hilbert's program, which
can be seen as an attempt to replace the question of existence of
model of a theory by the formal fact that this theory
is not contradictory.

Let $B$ the Lindenbaum algebra of $T$. To prove completeness, it is
enough to find a morphism $B\rightarrow F_2$ assuming that $B$ is not
trivial, wich is the same as finding a prime ideal (which is then
automatically maximal) in $B$. Thus the completeness theorem is a
consequence of the existence of prime ideal in nontrivial Boolean algebra.  Notice
that this existence is clear in the case where $B$ is finite, hence
that the completeness theorem is direct for finite theories.

\subsection{Compactness theorem}

 The completeness theorem for an arbitrary theory
can be seen as a corollary of the following
fundamental result.

\begin{theorem}
(Compactness theorem) Let $T$ be a theory.
If all finite subsets of $T$ have a model then so does $T$.
\end{theorem}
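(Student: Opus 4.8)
The plan is to obtain the compactness theorem as a corollary of the \tcg just established, by exploiting the fact that formal consistency is a property of finite character. Since the \tcg guarantees that a formally consistent theory has a model, it suffices to prove that $T$ is formally consistent, \cad that its Lindenbaum algebra $B$ is nontrivial. I would argue by contraposition: assuming $B$ is trivial, I would exhibit a finite subset $T_0\subseteq T$ that is already formally inconsistent, and then contradict the hypothesis that every finite subset of $T$ has a model.

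The crux is a finite-character statement about the presentation of $B$. Recall that $B$ is the free Boolean algebra on $V$ subjected to the relations $\phi = 1$ for $\phi\in T$. Viewing this Boolean algebra as a distributive lattice presented by generators and relations, I would invoke the explicit description of the collapsed quotient given in Proposition \ref{propIdealFiltre}, taking the family forced to $1$ to be $T$ and nothing forced to $0$. That description says that $1 = 0$ holds in the quotient \ssi there is a \emph{finite} subfamily $\{\phi_1,\dots,\phi_n\}\subseteq T$ with $\phi_1\wedge\cdots\wedge\phi_n = 0$ in the free algebra. Equivalently, $B$ is the filtered colimit of the Lindenbaum algebras $B_{T_0}$ of the finite $T_0\subseteq T$, and triviality of the colimit forces triviality of $B_{T_0}$ for some finite $T_0$.

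Granting this, the argument closes at once. From $B$ trivial I obtain a finite $T_0\subseteq T$ whose Lindenbaum algebra is trivial, so $T_0$ is formally inconsistent. Applying the \tcg to $T_0$ in contrapositive form — here it is in fact unconditional, since the relevant Boolean algebra is finite and the existence of a prime ideal, hence of a model, was already noted for finite theories — I conclude that $T_0$ has no model, contradicting the hypothesis. Therefore $B$ is nontrivial, $T$ is formally consistent, and one last application of the \tcg produces a model of $T$.

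I expect the only real obstacle to be the finite-character step: making precise that a collapse $1 = 0$ in the presented Boolean algebra can involve only finitely many of the imposed relations. This is exactly the lattice-theoretic content already packaged in Proposition \ref{propIdealFiltre} (together with the remark that quotienting a Boolean algebra by the filter generated by $T$ yields the same object whether computed as a lattice or as a Boolean algebra), so the main work is to recognize the Lindenbaum algebra as an instance of that construction and read off the finite witness; everything after that is routine bookkeeping.
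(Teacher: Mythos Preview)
Your argument is correct. The route you take---deducing compactness from completeness by showing that formal inconsistency is witnessed by a finite subtheory---is exactly the content of the paper's one-sentence remark that ``the compactness theorem follows from the completeness theorem, since a theory is formally consistent as soon as all its finite subsets are.'' Your contribution is to flesh out that clause: you identify the Lindenbaum algebra as the quotient of the free Boolean algebra on $V$ by the filter generated by $T$, invoke Proposition~\ref{propIdealFiltre} to extract a finite witness of the collapse $1=0$, and observe that the lattice quotient and Boolean-algebra quotient agree here. That is a clean way to justify the finite-character step.

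The paper's principal, self-contained proof is different, however: it argues topologically, observing that the set of models of each axiom is a closed subset of the product space $\{0,1\}^V$, so compactness of that space (Tychonoff for a product of finite discrete spaces) gives the result directly. The trade-offs are as one would expect. Your route presupposes completeness (which the paper grants via the existence of a prime ideal in a nontrivial Boolean algebra) and then reduces compactness to a purely syntactic/lattice-theoretic fact already available from Section~\ref{secKrullTreil}; this keeps everything inside the algebraic machinery of the paper. The topological route is independent of completeness---indeed the paper uses it in the other direction, deriving completeness from compactness---and is the standard textbook argument, but it imports topological compactness rather than the lattice presentations developed earlier.
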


 Suppose indeed that the compactness theorem holds, and let
$T$ be a formally consistent theory. Then an arbitrary finite
subset $T_0$ of $T$ is also formally consistent. Furthermore, we
have seen that this implies the existence of a model for $T_0$.
 It follows then from the compactness theorem
that $T$ itself has a model.

 Conversely, it is clear that the compactness theorem follows
from the completeness theorem, since a theory is formally
consistent as soon as all its finite subsets are.

 A simple general proof of the compactness theorem
is to consider the product topology
on $\{0,1\}^V$ and to notice that the set of models
of a given subset of $T$ is a closed subset. The theorem is then
a corollary of the compactness of the space $W:=\{0,1\}^V$ when
compactness is expressed (in classical mathematics) as:
if a family of closed subsets of $W$ has non-void finite
intersections, then its intersection is non-void.

\subsection{LPO and LLPO}

If $V$ is countable (\cad discrete and enumerable)
we have the following
alternative argument. One writes $V = \{p_0,p_1,\dots \}$
and builds by induction a partial valuation $v_n$
on $\{p_i~|~i<n\}$ such that any finite subset of $T$ has
a model which extends $v_n$, and $v_{n+1}$ extends
$v_n$. To define $v_{n+1}$ one first tries $v_{n+1}(p_n) = 0$.
If this does not work, there is a finite subset of $T$
such that any of its model $v$ that extends $v_n$ satisfies
$v(p_n) = 1$ and one can take $v_{n+1}(p_n) = 1.$

The non-effective part of this argument is contained in
the choice of $v_{n+1}(p_n)$, which demands to give a
gobal answer to an infinite set of (elementary) questions.

Now let us assume also that we can enumerate the infinite
set $T$. We can then build a sequence of finite
subsets of $T$ in a nondecreasing way
$K_0\subseteq K_1\subseteq \dots$ such that any finite
subset of $T$ is a subset of some $K_n$.
Assuming we have construct $v_{n}$ such that all $K_j$'s
have a model extending $v_{n}$, in order to define
$v_{n+1}(p_n)$ we have to give a global answer to the
questions: do all $K_j$'s have a model extending $v_{n+1}$
when we choose $v_{n+1}(p_n)=1$~?
For each $j$ this is an elementary question, having
a clear answer.
More precisely let us define $g_{n}:\N\rightarrow \{0,1\}$
in the following way: $g_{n}(j)=1$ if there is a model
$v_{n,j}$ of $K_{j}$ extending  $v_{n}$ with $v_{n,j}(p_{n})=1$,
else $g_{n}(j)=0$. By induction hypothesis if $g_{n}(j)=0$
then all  $K_{\ell}$ have a model $v_{n,\ell}$ extending
$v_{n}$ with $v_{n,\ell}(p_{n})=1$, and all models  $v_{n,\ell}$
of  $K_{\ell}$ extending  $v_{n}$ satisfy  $v_{n,\ell}(p_{n})=1$
if $\ell\geq j$.
So we can ``construct" inductively
the infinite sequence of partial models $v_{n}$ by using
at each step the non-constructive Bishop's principle LPO
(Least Principle of Omniscience):
given a function $f:\N\rightarrow \{0,1\}$,
either $f=1$ or $\exists j\in\N\;f(j)\neq 1$.
This principle is applied at step $n$ to the function $g_n$.

In fact  we can slightly modify the argument and
use only a combination of Dependant Choice and
of Bishop's principle LLPO (Lesser Limited
Principle of Omniscience), which is known to be
strictly weaker than LPO: given two non-increasing functions
$g,h:\N\rightarrow \{0,1\}$ such that, for all $j$
$$g(j) = 1 \vee h(j) = 1$$
then we have $g=1$ or $h=1$.
Indeed let us define  $h_{n}:\N\rightarrow \{0,1\}$
in a symmetric way: $h_{n}(j)=1$ if there is a model
$v_{n,j}$ of $K_{j}$ extending  $v_{n}$ with $v_{n,j}(p_{n})=0$,
else $h_{n}(j)=0$. Cleraly $g_n$ and $h_n$ are non-increasing functions.
By induction hypothesis, we have for all $j$
$g_n(j) = 1 \vee h_n(j) = 1$. So,
applying LLPO, we can define $v_{n+1}(p_n)=1$
if $g_n=1$ and $v_{n+1}(p_n)=1$ if $h_n=1$.
Nevertheless, we have to use dependant choice in
order to make this choice inifnitely often since the answer
``$g=1$ or $h=1$" given by the oracle LLPO may be
ambiguous.

In a reverse way
it is easy to see that the \tcg restricted to the
countable case implies LLPO.

\subsection{Geometric formulae and theories}

{\em What would have happened if topologies without points
had been discovered before topologies with points, or if
Grothendieck had known the theory of distributive lattices?}  (G. C. Rota \cite{Rota}).

%{\em There is an uneasy suspicion abroad that subjects now considered
%distincts are one and the same when properly viewed. An example: the
%theory of schemes, the theory of frames, modal logic, intuitionistic
%logic, topoi. The unifying thread is the much-maligned theory of
%distributive lattices} (G. C. Rota \cite{Rota}).

\bigskip

 A formula is {\em geometric} if, and only if, it is built only
with the connectives $0,1,\phi\wedge\psi,\phi\vee\psi$ from
the propositional letters in $V$. A theory if
a (propositional) {\em geometric} theory iff all the formula in $T$
are of the form $\phi\rightarrow \psi$ where $\phi$
and $\psi$ are geometric formulae.

 It is clear that the formulae of a geometric theory $T$ can be seen as
relations for generating a distributive lattice $L_T$ and that
the Lindenbaum algebra of $T$ is nothing else but the free Boolean
algebra generated by the lattice $L_T$. It follows from
Proposition \ref{propTrBoo} that $T$ is formally consistent if, and
only if, $L_T$ is nontrivial. Also, a model of $T$ is nothing else
but an element of $\Spec(L_T)$.

\begin{theorem}
(Completeness theorem for geometric theories) Let $T$ be a geometric
theory. If $T$ generates a nontrivial distributive lattice, then
$T$ has a model.
\end{theorem}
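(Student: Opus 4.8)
The plan is to reduce the statement to the propositional \tcg already established at the beginning of this appendix, using the dictionary between geometric theories and distributive lattices set up just above. The three identifications I would rely on are: (i) the formulae of $T$ present a distributive lattice $L_T$ by generators and relations; (ii) the Lindenbaum algebra $B_T$ of $T$ is the free Boolean algebra over $L_T$; and (iii) a valuation is a model of $T$ exactly when it factors through a lattice morphism $L_T\to\Deux$, so that the models of $T$ are precisely the points of $\Spec(L_T)$.

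First I would make (iii) precise. A valuation $v:V\to F_2$ extends uniquely to geometric formulae, and since these are built only from $0,1,\vi,\vu$, the assignment $\phi\mapsto v(\phi)$ is a morphism from the free lattice on $V$ into $\Deux=F_2$. Demanding $v(\phi)\leq v(\psi)$ for every axiom $\phi\to\psi$ of $T$ is exactly the requirement that this morphism respect the defining relations of $L_T$, so that it factors through $L_T$; conversely every element of $\Hom(L_T,\Deux)=\Spec(L_T)$ arises this way. Hence $T$ has a model \ssi $\Spec(L_T)\neq\emptyset$.

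Next I would establish the key equivalence: $L_T$ is nontrivial \ssi $B_T$ is nontrivial, \cad \ssi $T$ is formally consistent. The substantive direction is that nontriviality of the lattice survives passage to its free Boolean algebra. Here Proposition \ref{propTrBoo} does the work: it provides a natural embedding of $L_T$ into $B_T=(L_T)_{Bool}$ preserving $0$ and $1$. Since this map is injective and $0_{L_T}\neq 1_{L_T}$, we get $0_{B_T}\neq 1_{B_T}$, so $B_T$ is nontrivial. The reverse implication is immediate, as $L_T$ embeds in $B_T$.

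With these in hand the proof closes at once: assuming $L_T$ nontrivial, the previous step shows $T$ is formally consistent, and the propositional \tcg proved above then yields a model of $T$, which by step (iii) is the same as a point of $\Spec(L_T)$. The one point that genuinely requires care---and the main obstacle---is the propagation of nontriviality through the completion to a Boolean algebra: a priori, forcing complements to exist could collapse $0=1$, and it is precisely the faithfulness assertion of Proposition \ref{propTrBoo} that rules this out. I would also be careful \emph{not} to invoke Proposition \ref{propTr2} (that $\Spec(L)\neq\emptyset$ whenever $L\neq\Un$), since that proposition is itself deduced from the very theorem being proved here, and using it would be circular.
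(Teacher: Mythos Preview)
Your proposal is correct and follows essentially the same route as the paper: the paragraph immediately preceding the theorem already records that the Lindenbaum algebra of $T$ is the free Boolean algebra on $L_T$, that Proposition~\ref{propTrBoo} then gives ``$L_T$ nontrivial $\Leftrightarrow$ $T$ formally consistent'', and that models of $T$ are points of $\Spec(L_T)$, so the result is a direct consequence of the general propositional completeness theorem. Your explicit unpacking of these identifications, and your remark that invoking Proposition~\ref{propTr2} here would be circular, are both welcome clarifications of what the paper leaves implicit.
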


 The general notion of geometric formula allows also existential
quantification, but we restrict ourselves here to the propositional case.
Even in this restricted form, the notion of geometric theory is
fundamental. For instance, if $R$ is a commutative ring, we can
consider the theory with atomic propositions
$D(x)$ for each $x\in R$ and with axioms
\begin{itemize}
\item $D(0_R)\rightarrow 0$
\item $1\rightarrow D(1_R)$
\item $D(x)\wedge D(y)\rightarrow D(xy)$
\item $D(xy)\rightarrow D(x)$
\item $D(x+y)\rightarrow D(x)\vee D(y)$
\end{itemize}
This is a geometric  theory $T$. The model of this theory are
clearly the complement of the prime ideals. What is remarkable is that, while the
existence of models of this theory is a nontrivial fact
which may be dependent on set theoretic axioms (such as dependent axiom of choices)
its formal consistency is completely elementary (as explained in the
beginning of the section \ref{secZariKrull}).
This geometric theory, or the distributive lattice it generates,
can be seen as a point-free description of the Zariski
spectrum of the ring. The distributive lattice
generated by this theory (called in this paper the Zariski lattice of $R$)
is isomorphic to the lattice
of compact open of the Zariski spectrum of $R$, while the Boolean
algebra generated by this theory is isomorphic to the algebra
of the constructible sets.

%-----------------------------------------------------

\end{document}